\newtheorem{theorem}{Theorem}[section]
\newtheorem{lemma}[theorem]{Lemma}
\newtheorem{prop}[theorem]{Proposition}
\newtheorem{coro}[theorem]{Corollary}
\theoremstyle{definition}
\newtheorem{remark}[theorem]{Remark}
\newcommand{\NN}{\mathbb{N}}
\newcommand{\RR}{\mathbb{R}}
\newcommand{\PP}{\mathbb{P}}
\newcommand{\CC}{\mathbb{C}}
\newcommand{\Cc}{\mathcal{C}}
\newcommand{\Ec}{\mathcal{E}}
\newcommand{\BB}{\mathbb{B}}
\newcommand{\setdef}{\ \vert \ }
\newcommand{\vep}{\varepsilon}
\newcommand{\psh}{{\rm PSH}}
\newcommand{\capa}{{\rm Cap}}
\newcommand{\capo}{{\rm Cap}_{\omega}}
\newcommand{\vol}{{\rm Vol}}
\newcommand{\AM}{{\rm I}}
\newcommand{\Amp}{{\rm Amp}}
\title{On the singularity type of full mass currents in big cohomology classes}
\author{Tam\'as Darvas, Eleonora Di Nezza, Chinh H. Lu}
\date{\vspace{-0.5cm}}
\begin{document}

\maketitle

\begin{abstract}Let $X$ be a compact K\"ahler manifold and $\{\theta\}$ be a big cohomology class. We prove several results about the singularity type of full mass currents, answering a number of open questions in the field. First, we show that the Lelong numbers and multiplier ideal sheaves of $\theta$-plurisubharmonic functions with full mass are the same as  those of the current with minimal singularities. Second, given another big and nef class $\{\eta\}$, we show the inclusion $\Ec(X,\eta) \cap \psh(X,\theta) \subset \Ec(X,\theta).$ Third, we characterize big classes whose full mass currents are ``additive''.  Our techniques make use of a characterization of full mass currents in terms of the envelope of their singularity type. As an essential ingredient we also develop the theory of weak geodesics in big cohomology classes.  Numerous applications of our results to complex geometry are also given. 
\end{abstract}

\section{Introduction and main results}

Since the fundamental work of Aubin \cite{Aub} and Yau \cite{Yau}, the complex Monge-Amp\`ere operator has found many important applications in differential geometry. In this vast area of research, pluripotential theory plays a crucial role, initiated by the seminal work of Bedford--Taylor \cite{BT76,BT82,BT87} and Ko{\l}odziej \cite{Kol98}, to only mention a few. 

Guedj and Zeriahi extended Bedford--Taylor theory to compact K\"ahler manifolds $(X,\omega)$  \cite{GZ05, GZ07}. Their idea was to extend the definition of the complex Monge--Amp\`ere operator to much larger sets of potentials, not only bounded ones. As a result, an adequate variational theory could be devised for global equations of complex Monge--Amp\`ere type \cite{BBGZ13,BBEGZ12} that has found many striking applications in K\"ahler geometry. 

Additionally, the methods of \cite{GZ07} have proven to be very robust, as they also apply in case of big cohomology classes that are non--K\"ahler, as explored in \cite{BEGZ10}. Given a smooth $(1,1)$-from $\theta$ on $X$, we say that the class $\{\theta\}$ is \emph{big}, if there exists a quasi--plurisubharmonic function $u$ on $X$ such that $\theta + dd^c u \geq \varepsilon \omega$  for some $\varepsilon >0$. 
Non--K\"ahler big classes arise naturally in constructions of algebraic geometry. Given a one point blowup of an arbitrary K\"ahler manifold, the simplest such example is given by the sum of the exceptional divisor class and a ``very small''  K\"ahler class. 

When varying K\"ahler classes, one often has to study degenerate classes as well, and there has been a lot of work in trying to characterize the degenerate classes that admit special K\"ahler metrics \cite{BBEGZ12,Dar16,SSY}.  
Recently, the solution to the complex Monge--Amp\`ere equation in a big class has been  used to show that the cone of pseudoeffective classes is dual to the movable cone, solving an important open problem in complex algebraic geometry \cite{WN16}. 

In most of the above mentioned works that study degenerate metrics finite energy pluripotential theory plays an important role. Partly motivated by this, and partly by a survey of open questions \cite{DGZ15}, we will investigate further the finite energy pluripotential theory of big cohomology classes. For a big class $\{ \theta\}$, the class of full mass currents $\mathcal E(X,\theta)$ is of central interest, as in many ways it is the analog of the classical Sobolev spaces, given the role it plays in the variational study of complex Monge-Amp\`ere equations (for the precise definition, see Section 2.1). Our first main result clarifies the local/global singular behavior of potentials in $\mathcal E(X,\theta)$ in various settings of geometric interest:  

\begin{theorem}\label{thm: lelong number big class}
        Let $(X,\omega)$ be a K\"ahler manifold. Assume that $\theta$ is a smooth closed $(1,1)$-form such that $\{\theta\}$ is big. Let $V_{\theta}$ be the envelope of $\theta$. Then we have the following: \\
\noindent (i) for any $\varphi\in \mathcal{E}(X,\theta)$ we have
$$\nu(\varphi,x) = \nu(V_{\theta},x) \textup{ and }\mathcal I(t\varphi,x)=\mathcal I(tV_\theta,x),  \ \forall x\in X, t >0,$$ where $\nu(\varphi,x)$ is the Lelong number of $\varphi$ at $x$, and $\mathcal I(t\varphi,x)$ is the germ of the multiplier ideal sheaf of $t\varphi$ at $x$. \\
\noindent (ii) If $\{\eta\}$ is a big and nef class, then $$\Ec(X,\eta) \cap \psh(X,\theta) \subset \Ec(X,\theta).$$  
In particular, when $\theta = \omega$, this last inclusion gives that $\nu(\varphi,x)=0$ for any $x \in X, \varphi \in \Ec(X,\eta)$.
\end{theorem}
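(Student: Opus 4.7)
The easy direction of part (i) is immediate after normalizing $\sup_X \varphi = 0$: $\varphi \le V_\theta$ pointwise, since $V_\theta$ is the upper envelope of the nonpositive $\theta$-psh functions. Monotonicity of Lelong numbers and multiplier ideals in the argument then gives
$$\nu(\varphi, x) \ge \nu(V_\theta, x), \qquad \mathcal{I}(t\varphi, x) \subseteq \mathcal{I}(t V_\theta, x).$$
The real content is the reverse inclusions, which must exploit the full-mass hypothesis.

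The natural tool, as advertised in the abstract, is the envelope-of-singularity-type characterization of $\Ec(X,\theta)$: $\varphi \in \Ec(X,\theta)$ if and only if $P_\theta[\varphi] = V_\theta$, where $P_\theta[\varphi]$ is the upper envelope of the $\theta$-psh functions whose singularity type is controlled by that of $\varphi$. I would reduce the theorem to a preservation lemma: for every $\theta$-psh $u$ with $\sup_X u = 0$, $\nu(u, x) = \nu(P_\theta[u], x)$ and $\mathcal{I}(tu, x) = \mathcal{I}(t P_\theta[u], x)$. The approximants $u_C := P_\theta(\min(u + C, 0))$, whose increasing limit is $P_\theta[u]$, satisfy $u \le u_C \le u + C$ (since $u$ is itself a candidate for that envelope), so each $u_C$ trivially shares Lelong numbers and multiplier ideals with $u$. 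Passing to the limit $C \nearrow \infty$ then requires a careful semicontinuity argument combined with Siu's analyticity of Lelong super-level sets for the Lelong part, and the Guan--Zhou strong openness theorem for the multiplier-ideal part.

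For part (ii), the new input is that $\{\eta\}$ big and nef forces $V_\eta$ to be continuous on $X$ (standard regularity of envelopes in nef classes), so $\nu(V_\eta, x) = 0$ and $\mathcal{I}(t V_\eta, x) = \mathcal{O}_{X,x}$ everywhere. Part (i) applied in the class $\{\eta\}$ then yields $\nu(\varphi, x) = 0$ and $\mathcal{I}(t\varphi, x) = \mathcal{O}_{X,x}$ for every $\varphi \in \Ec(X, \eta)$, which gives the ``in particular'' clause. To deduce $\Ec(X,\eta) \cap \psh(X,\theta) \subset \Ec(X,\theta)$, I would again verify $P_\theta[\varphi] = V_\theta$: the vanishing of Lelong numbers and multiplier ideals inherited from the $\eta$-side rules out any excess $\theta$-singularity of $\varphi$, and the weak geodesics in big classes developed earlier in the paper provide the link between $\varphi$ and $V_\theta$ inside $\psh(X,\theta)$ along which the finite Monge--Amp\`ere energy on the $\eta$-side transfers to the $\theta$-side by affine or convex monotonicity of the energy.

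The main obstacle is the preservation of Lelong numbers and multiplier ideals under the envelope $P_\theta[\cdot]$: each $u_C$ preserves them trivially, but the monotone limit need not, as these invariants are only semi-continuous. The most likely mechanism, and the heart of the argument, is a relative Demailly-type estimate
$$\theta_\varphi^n(\{x\}) \ge \bigl(\nu(\varphi, x) - \nu(V_\theta, x)\bigr)^n$$
for the non-pluripolar Monge--Amp\`ere product; combined with the fact that non-pluripolar products put no mass on points, this forces $\nu(\varphi, x) \le \nu(V_\theta, x)$ for every $\varphi \in \Ec(X,\theta)$. For part (ii) a secondary difficulty is the transfer of full mass between the two big classes, since $\{\theta\} - \{\eta\}$ is a priori just a real class with no sign; the nefness of $\eta$ provides the crucial bit of positivity, and the weak-geodesic machinery is the natural technical vehicle for making this precise.
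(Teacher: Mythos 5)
Your reductions are sound as far as they go: the easy inequalities, the reduction of (i) to the identity $P_{[\theta,\varphi]}(V_\theta)=V_\theta$ from Theorem \ref{thm: characterization E big classes}, and the multiplier--ideal half via strong openness (Guan--Zhou in Lempert's form for the increasing family $P_\theta(\min(\varphi+c,V_\theta))$, which has the singularity type of $\varphi$) are exactly the paper's route. The problem is the step you yourself call the heart of the Lelong-number argument. The proposed ``relative Demailly-type estimate'' $\theta_\varphi^n(\{x\})\geq\bigl(\nu(\varphi,x)-\nu(V_\theta,x)\bigr)^n$ cannot hold for the non-pluripolar product: by construction that product puts no mass on pluripolar sets, so $\theta_\varphi^n(\{x\})=0$ for \emph{every} $\varphi\in\psh(X,\theta)$, and your inequality would force $\nu(\varphi,x)\leq\nu(V_\theta,x)$ for all $\theta$-psh $\varphi$ --- absurd, since a big class contains potentials with an extra logarithmic pole at any prescribed point. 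The semicontinuity issue you flag (Lelong numbers may drop when passing to the increasing limit of the approximants $P_\theta(\min(\varphi+C,V_\theta))$) is genuine, and Siu's analyticity theorem does not resolve it. The paper closes it by a uniform local barrier rather than a limiting argument: with $\gamma=\nu(\varphi,x)$ and $g$ a local potential of $\theta$ on a coordinate ball $\BB$ around $x$, every candidate $v$ in the envelope defining $P_{[\theta,\varphi]}(V_\theta)$ satisfies $v+g\leq 0$ and $v+g\leq\gamma\log\|z\|+O(1)$ (the $O(1)$ may depend on $v$), hence $v+g\leq G_{\BB}(\cdot,0)$, the pluricomplex Green function with pole of order $\gamma$; since this bound is uniform over all candidates, $V_\theta+g=P_{[\theta,\varphi]}(V_\theta)+g\leq G_{\BB}(\cdot,0)\sim\gamma\log\|z\|+O(1)$, so $\nu(V_\theta,x)\geq\nu(\varphi,x)$ directly.

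For (ii) your ``in particular'' clause is essentially fine, except that what you need is only $\nu(V_\eta,\cdot)\equiv 0$ for big and nef $\{\eta\}$ (true, by interpolating a Kähler-current potential with the almost-positive potentials furnished by nefness); global continuity of $V_\eta$ is not standard and should not be invoked. But the main inclusion $\Ec(X,\eta)\cap\psh(X,\theta)\subset\Ec(X,\theta)$ is not proved: ``vanishing Lelong numbers and trivial multiplier ideals rule out any excess $\theta$-singularity'' is not a valid mechanism, since already in a Kähler class there are potentials with zero Lelong numbers everywhere that fail to have full mass (the converse of \cite[Corollary 1.8]{GZ07} is false), so these invariants cannot yield $P_{[\theta,\varphi]}(V_\theta)=V_\theta$; and the appeal to weak geodesics and ``energy transfer'' is not an argument. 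The paper's proof goes through the Kähler class $\tilde\omega:=\eta+\omega$: from Theorem \ref{thm: characterization E big classes}, $P_{[\eta,\varphi]}(V_\eta)=V_\eta$, hence by monotonicity $P_{[\tilde\omega,\varphi]}(V_\eta)=V_\eta$; the crucial step --- where nefness actually enters --- is the cohomological computation showing $V_\eta\in\Ec(X,\tilde\omega)$, namely $\vol(\{\eta+\omega\})=\sum_{k=0}^n\binom{n}{k}\{\eta\}^k\cdot\{\omega\}^{n-k}=\int_X\tilde\omega_{V_\eta}^n$ (continuity of volumes \cite{BFJ09} plus multilinearity of non-pluripolar products); then \cite[Theorem 4]{Dar14a} gives $\varphi\in\Ec(X,\tilde\omega)$, and \cite[Theorem B]{DN15} descends to $\Ec(X,\theta)$ since $\theta\leq\tilde\omega$ and $\varphi\in\psh(X,\theta)$. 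Your sketch contains no substitute for this volume argument, which is the actual content of nefness in part (ii).
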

Here, $V_\theta$ is the ``least singular'' element of ${\rm PSH}(X,\theta)$, and for the precise definition of all concepts in the above result we refer to Section 2.1. The statement of (ii) cannot hold in case $\left\{ \eta \right\}$ is merely big.  Indeed, if $\{\eta\}$ is big but not nef, the envelope $V_{\eta}$ may have a positive Lelong number at some point $x \in X$, hence its complex Monge-Amp\`ere measure can not have full mass with respect to a K\"ahler form $\omega\geq \eta$, as shown in \cite[Corollary 1.8]{GZ07}.  

In the particular case when $\{\theta\}$ is semi-positive and big,  Theorem \ref{thm: lelong number big class} answers affirmatively an open question in \cite[Question 36]{DGZ15}, saying that potentials in $\mathcal E(X,\theta)$ have zero Lelong numbers. A very specific instance of this was verified in \cite[Theorem 1.1]{BBEGZ12}, using techniques from algebraic geometry. 

 Our arguments use the envelope construction originally due to Ross--Witt Nystr\"om \cite{RWN} that we recall now. For  an upper semi-continuous function $f$ on $X$, we let $P_{\theta}(f)$ be the largest $\theta$-psh function lying below $f$, i.e.
$P_{\theta}(f)=\sup \{u\; |\; \theta {\rm-psh}\; u\leq f\}$.  
Given $\psi,\varphi$, two $\theta$-psh functions, we define:
\begin{equation}\label{eq: RWN envelope}
P_{[\theta,\psi]}(\varphi):= \Big\{ \lim_{C\to +\infty} P_{\theta}(\min(\psi+C,\varphi))\Big\}^*.
\end{equation}
In Section \ref{sect: preliminaries},  we prove that whenever $\varphi,\psi$ belong to $\mathcal{E}(X,\theta)$ then  $P_{\theta}(\min(\varphi,\psi))$ also belongs to $\mathcal{E}(X,\theta)$. Coincidentally, with the help of this result we can settle a conjecture in \cite[Remark 2.16]{BEGZ10}, regarding the convexity of finite energy classes associated to a big cohomology class (Corollary \ref{cor: BBEGZ_conj}).

To familiarize the reader with the flavor of our arguments, we sketch the proof of the statement involving Lelong numbers in Theorem \ref{thm: lelong number big class}(i) in the semi-positive case (when $\theta \geq 0$). If $\varphi\in \mathcal{E}(X,\theta)$ it follows from an approximation and balayage argument \cite{BT82} that the Monge--Amp\`ere measure of $P_{[\theta,\varphi]}(0)$ vanishes on $\{P_{[\theta,\varphi]}(0)<0\}$. It thus follows from the domination principle that $P_{[\theta,\varphi]}(0)=0$. For a K\"ahler form $\omega>\theta$, we have that $P_{[\omega,\varphi]}(0)=0$, since $P_{[\omega,\varphi]}(0)\geq P_{[\theta,\varphi]}(0)=0$. Hence \cite[Theorem 3]{Dar13} yields that $\varphi\in \mathcal{E}(X,\omega)$. This together with \cite[Corollary 1.8]{GZ07} imply that $\varphi$ has zero Lelong numbers everywhere on $X$.

The simple argument described above relies on  a surprising  characterization of the class $\mathcal{E}(X,\omega)$ in terms of the  envelope construction of  \eqref{eq: RWN envelope}  \cite[Theorem 3]{Dar13},\cite[Theorem 4]{Dar14a}. Our next result, which is a vital ingredient in our proof of Theorem \ref{thm: lelong number big class}, shows that this characterization holds in the context of big classes as well:  
\begin{theorem}\label{thm: characterization E big classes}
Let $\{\theta\}$ be a  big cohomology class and fix $\varphi\in \mathcal{E}(X,\theta)$. Then a function $\psi\in {\rm PSH}(X,\theta)$ belongs to $\mathcal{E}(X,\theta)$ if and only if $P_{[\theta,\psi]}(\varphi)=\varphi$.
\end{theorem}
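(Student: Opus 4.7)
The plan is to handle the two implications separately, working throughout with the increasing family $u_C := P_{\theta}(\min(\psi+C, \varphi))$, whose upper semi-continuous regularized limit is, by definition \eqref{eq: RWN envelope}, the function $h := P_{[\theta, \psi]}(\varphi)$.

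For the easier direction, suppose $h = \varphi$. Since $u_C \leq \psi + C$ is more singular than $\psi + C$, monotonicity of non-pluripolar Monge--Amp\`ere mass with respect to singularity type in big cohomology classes (from \cite{WN16,BEGZ10}) gives $\int_X \theta_{u_C}^n \leq \int_X \theta_\psi^n$. The hypothesis $h = \varphi$ forces $u_C \nearrow \varphi$ outside a pluripolar set, and continuity of non-pluripolar mass along increasing sequences yields $\int_X \theta_{u_C}^n \to \int_X \theta_\varphi^n = \int_X \theta_{V_\theta}^n$, using $\varphi \in \mathcal{E}(X,\theta)$. Combined with the opposite inequality $\int_X \theta_\psi^n \leq \int_X \theta_{V_\theta}^n$, which always holds, I conclude $\psi \in \mathcal{E}(X,\theta)$.

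For the forward direction, assume $\psi \in \mathcal{E}(X,\theta)$. Then $\psi + C \in \mathcal{E}(X,\theta)$, and the stability property proved in Section \ref{sect: preliminaries} (that $P_\theta(\min(f,g)) \in \mathcal{E}$ whenever $f, g \in \mathcal{E}$) gives $u_C \in \mathcal{E}(X,\theta)$. Passing to the increasing limit and using mass continuity, I deduce $h \in \mathcal{E}(X,\theta)$, with $h \leq \varphi$. To conclude $h = \varphi$ it suffices to establish $\theta_h^n(\{h < \varphi\}) = 0$, since then the comparison principle yields $\int_{\{h<\varphi\}} \theta_\varphi^n \leq \int_{\{h<\varphi\}} \theta_h^n = 0$, and the domination principle in $\mathcal{E}(X,\theta)$ forces $\varphi \leq h$.

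To prove the vanishing, I exploit the envelope orthogonality: $\theta_{u_C}^n$ is carried by the contact set $\{u_C = \min(\psi+C, \varphi)\}$. On $\{h < \varphi\}$ one has $u_C \leq h < \varphi$, so $\theta_{u_C}^n$ restricted there is supported on $\{u_C = \psi+C\} \subseteq \{\psi \leq \varphi - C\}$. Via a local contact-set comparison of the form $\mathbf{1}_{\{u_C = \psi+C\}} \theta_{u_C}^n \leq \theta_\psi^n$, I then bound
$$\int_{\{h<\varphi\}} \theta_{u_C}^n \leq \int_{\{\psi \leq \varphi - C\}} \theta_\psi^n \longrightarrow 0 \text{ as } C \to \infty,$$
the limit holding because $\theta_\psi^n$ is a non-pluripolar measure and $\{\psi \leq \varphi - C\}$ decreases to a pluripolar set. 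Passing to the limit through the weak convergence $\theta_{u_C}^n \to \theta_h^n$ gives $\theta_h^n(\{h<\varphi\}) = 0$. I expect the main obstacle to be making the contact-set estimate fully rigorous, since $\{u_C = \psi+C\}$ is only Borel and $\psi$ may be unbounded, and carefully justifying the weak limit of the non-pluripolar products on a non-open set; both steps should follow from the local Bedford--Taylor theory together with the envelope regularity results developed earlier in the paper.
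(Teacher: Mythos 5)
Your overall strategy is not the paper's, and both halves lean on ingredients that are not available within the paper's framework. The serious issue is the direction $P_{[\theta,\psi]}(\varphi)=\varphi \Rightarrow \psi\in\Ec(X,\theta)$, which you treat as the ``easier'' one. Your argument rests on two facts: (a) monotonicity of non-pluripolar masses with respect to singularity type, $u_C\leq\psi+C$ implying $\int_X\theta_{u_C}^n\leq\int_X\theta_\psi^n$, and (b) convergence of the masses $\int_X\theta_{u_C}^n$ to $\int_X\theta_\varphi^n$ along the increasing limit $u_C\nearrow\varphi$. Neither is contained in the references you cite: \cite{WN16} is about the duality of the pseudoeffective and movable cones, and \cite{BEGZ10} proves mass monotonicity only for potentials with small unbounded locus, whereas $u_C$ and $\psi$ are arbitrary $\theta$-psh functions. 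At the level of this paper both (a) and (b) are open (they were established only in later work), and granting them makes the theorem nearly trivial --- so you have effectively assumed the hard part. This is precisely why the paper takes a completely different route for this implication: it develops weak geodesics in big classes, convexity and linearity of $\AM$ along (sub)geodesics (Theorems \ref{thm: AM convex big} and \ref{thm: AM linear big}), the slope invariant $c_\psi$ and its characterization of $\Ec(X,\theta)$ (Proposition \ref{prop: E_char_c_psi}), and the Legendre-transform argument (Lemma \ref{lem: Legendre transform}), all assembled in Theorem \ref{thm: characterization E big full version}.

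The direction $\psi\in\Ec(X,\theta)\Rightarrow P_{[\theta,\psi]}(\varphi)=\varphi$ is closer in spirit to the paper's Theorem \ref{thm: Dravas criterion 1} (orthogonality of the envelope via Proposition \ref{prop: BD vanish general}, membership in $\Ec$ via Theorem \ref{thm: sub-extension}, then the domination principle, Proposition \ref{prop: domination principle}), but your execution has two gaps. First, the contact-set inequality $\mathbf{1}_{\{u_C=\psi+C\}}\theta_{u_C}^n\leq\theta_\psi^n$ is not proved anywhere in the paper and is not a consequence of local Bedford--Taylor theory when $\psi$ is unbounded in a big class; it is itself a nontrivial lemma (again from later literature), and the paper deliberately avoids it: in Theorem \ref{thm: Dravas criterion 1} no comparison with $\theta_\psi^n$ is ever needed, since the measures $\theta_{\psi_t}^n$ are shown to vanish on $\{\psi_t<\psi<\varphi+t\}$ directly. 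Second, your final limit passage cannot be done ``through weak convergence'': $\{h<\varphi\}$ is only quasi-open, so weak convergence of $\theta_{u_C}^n$ gives no control of its mass there; one must restrict to sets of the form $\{V_\theta-j<\psi_s\leq v<\psi<\varphi+s\}$ and run the quasi-open/capacity approximation of Proposition \ref{prop: BD vanish general} (using Theorem \ref{thm: comparison of capacity}), which is exactly what the paper does. So while your outline for this half could be repaired along the paper's lines, as written it defers precisely the steps that require the paper's technical work, and the other half of the equivalence is not proved at all with the tools at hand.
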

In order to prove the above result, we introduce the seemingly unrelated notion of weak geodesics in big cohomology classes, mimicking Berndtsson's construction in the K\"ahler case \cite[Section 2.2]{Bern}, and we prove that the Monge--Amp\`ere energy $\AM$ (sometimes called Aubin--Yau or Aubin--Mabuchi energy) is  convex/linear  along weak  subgeodesics/geodesics (Theorems \ref{thm: AM convex big} and \ref{thm: AM linear big}). Compared to the K\"ahler case, this is a very subtle issue and it serves as the key technical ingredient  in the proof of Theorem \ref{thm: characterization E big classes}.

When varying big classes, an important question is to understand how the class of full mass currents changes. Theorem \ref{thm: lelong number big class}(ii) already establishes a result in this direction in the particular case of big and nef classes. Paralleling this, as a consequence of Theorem \ref{thm: characterization E big classes},  we can characterize the pairs of big classes that have ``additive'' full mass currents, greatly generalizing \cite[Theorem B]{DN15} in the process: 
\begin{theorem}\label{thm: sum_FULL_MASS_1}
Let $\{ \theta_1\}, \{ \theta_2\}$ be big classes on $X$. The following are equivalent:\\
(i) $ V_{\theta_1}+V_{\theta_2} \in \mathcal E(X, \theta_1 + \theta_2).$\\
(ii) For any $u \in \psh(X,\theta_1), v \in \psh(X,\theta_2)$ we have
\[
u+v \in \mathcal E(X,\theta_1 + \theta_2) \Longleftrightarrow u\in \Ec(X,\theta_1), v\in \Ec(X,\theta_2).
\]      
\end{theorem}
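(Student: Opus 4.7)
The implication (ii) $\Rightarrow$ (i) is a matter of specialization: apply (ii) to $u = V_{\theta_1}$ and $v = V_{\theta_2}$, both of which lie in $\mathcal{E}(X,\theta_i)$ since they have minimal singularities. For the substantial implication (i) $\Rightarrow$ (ii), the hypothesis $V_{\theta_1}+V_{\theta_2} \in \mathcal{E}(X,\theta_1+\theta_2)$ is precisely what enables me to invoke Theorem \ref{thm: characterization E big classes} for the class $\{\theta_1+\theta_2\}$ with reference function $V_{\theta_1}+V_{\theta_2}$.

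For the ``if'' direction of (ii), assume $u \in \mathcal{E}(X,\theta_1)$ and $v \in \mathcal{E}(X,\theta_2)$. The starting point is the elementary envelope inequality
$$P_{\theta_1}(\min(u+C, V_{\theta_1})) + P_{\theta_2}(\min(v+C, V_{\theta_2})) \leq P_{\theta_1+\theta_2}(\min(u+v+2C,\, V_{\theta_1}+V_{\theta_2})),$$
valid because the left-hand side is $(\theta_1+\theta_2)$-plurisubharmonic and pointwise bounded above by both arguments of the $\min$ on the right. Letting $C \to \infty$ and applying Theorem \ref{thm: characterization E big classes} to each of $u$ and $v$ separately, the left-hand side increases (after upper semi-continuous regularization) to $V_{\theta_1}+V_{\theta_2}$, forcing $P_{[\theta_1+\theta_2,\, u+v]}(V_{\theta_1}+V_{\theta_2}) = V_{\theta_1}+V_{\theta_2}$. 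A final application of Theorem \ref{thm: characterization E big classes} then yields $u+v \in \mathcal{E}(X,\theta_1+\theta_2)$.

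For the converse, assume $u+v \in \mathcal{E}(X,\theta_1+\theta_2)$; after subtracting large constants, one may take $u \leq V_{\theta_1}$ and $v \leq V_{\theta_2}$. The strategy is to compare total non-pluripolar Monge-Amp\`ere masses. Expand multinomially,
$$\langle (\theta_1+\theta_2+dd^c(u+v))^n \rangle = \sum_{k=0}^n \binom{n}{k} \langle (\theta_1+dd^c u)^k \wedge (\theta_2+dd^c v)^{n-k} \rangle,$$
and combine with the BEGZ \cite{BEGZ10} monotonicity of mixed non-pluripolar masses,
$$\int_X \langle (\theta_1+dd^c u)^k \wedge (\theta_2+dd^c v)^{n-k} \rangle \leq \int_X \langle (\theta_1+dd^c V_{\theta_1})^k \wedge (\theta_2+dd^c V_{\theta_2})^{n-k} \rangle,$$
which applies since $u \leq V_{\theta_1}$ and $v \leq V_{\theta_2}$. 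Summing over $k$, using the hypothesis on $u+v$, and using (i), I obtain
$$\vol(\theta_1+\theta_2) = \int_X \langle (\theta_1+\theta_2+dd^c(u+v))^n \rangle \leq \int_X \langle (\theta_1+\theta_2+dd^c(V_{\theta_1}+V_{\theta_2}))^n \rangle = \vol(\theta_1+\theta_2).$$
Every summand must therefore be an equality; the $k=n$ and $k=0$ terms respectively deliver $\int_X \langle (\theta_1+dd^c u)^n \rangle = \vol(\theta_1)$ and the analogous statement for $v$, so $u \in \mathcal{E}(X,\theta_1)$ and $v \in \mathcal{E}(X,\theta_2)$.

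The main obstacle is the ``if'' direction, which rests on Theorem \ref{thm: characterization E big classes}, the technical core of the paper whose proof requires the development of weak geodesics in big cohomology classes. The ``only if'' direction, by contrast, reduces to the now-standard BEGZ machinery \cite{BEGZ10} for non-pluripolar products, modulo the usual bookkeeping for unbounded potentials via the canonical truncations $u_j := \max(u, V_{\theta_1}-j)$ and $v_j := \max(v, V_{\theta_2}-j)$.
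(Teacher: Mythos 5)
Your proof is correct, and for the core of the theorem it follows the paper's own route: the specialization giving (ii)$\Rightarrow$(i), and the ``if'' half of (i)$\Rightarrow$(ii) via sub-additivity of the rooftop envelopes, $P_{[\theta_1,u]}(V_{\theta_1})+P_{[\theta_2,v]}(V_{\theta_2})\leq P_{[\theta_1+\theta_2,\,u+v]}(V_{\theta_1}+V_{\theta_2})$, combined with Theorem \ref{thm: characterization E big classes} used twice --- once in each factor class to identify the left-hand side with $V_{\theta_1}+V_{\theta_2}$, and once in $\{\theta_1+\theta_2\}$ with reference potential $V_{\theta_1}+V_{\theta_2}\in\Ec(X,\theta_1+\theta_2)$, which is exactly where hypothesis (i) enters --- this is essentially verbatim the paper's argument. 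Where you genuinely diverge is the ``only if'' half: the paper simply quotes \cite[Theorem B]{DN15}, whereas you reprove it by expanding $\langle(\theta_1+\theta_2+dd^c(u+v))^n\rangle$ multilinearly and comparing each mixed mass with the corresponding mass for $V_{\theta_1},V_{\theta_2}$. That argument is sound, and it even shows that this implication holds without assuming (i): your chain of inequalities forces (i) along the way, which is consistent with the paper's unconditional citation. One point to make explicit: the monotonicity result of \cite{BEGZ10} (Theorem 1.16 there) is proved for potentials with small unbounded locus, which arbitrary $u,v$ need not have, so it cannot be applied to $u,v$ directly; however the comparison you actually need --- mixed masses of arbitrary potentials bounded above by those of minimal-singularity potentials --- follows from the definition of the non-pluripolar product through the canonical truncations $\max(u,V_{\theta_1}-j)$, $\max(v,V_{\theta_2}-j)$, which do have minimal singularities and hence small unbounded locus, so \cite{BEGZ10} applies to them; this is precisely the bookkeeping you allude to at the end, so there is no gap, only this attribution to spell out.
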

As it turns out, when $\{\theta_1\},\{\theta_2\}$ are big and nef, condition (i) in the above theorem is automatically satisfied (Corollary \ref{thm: sum_FULL_MASS_nef}). 
This result also helps to partially confirm conjecture \cite[Conjecture 1.23]{BEGZ10} concerning log concavity of the non-pluripolar  complex Monge-Amp\`ere measure in the case of full mass currents of big and nef classes (see Corollary \ref{cor: log concave big class}).

\paragraph{Organization of the paper.}

Section \ref{sect: preliminaries} mostly reviews background material on the pluripotential theory of big cohomology classes and we establish some preliminary results. In Section \ref{sect: weak geodesic} we develop the theory of weak geodesics in big cohomology classes following Berndtsson's ideas, and then we prove Theorem \ref{thm: characterization E big classes} and Theorem \ref{thm: sum_FULL_MASS_1}. Theorem \ref{thm: lelong number big class} will be proved in Section \ref{sect: proof of Theorem 1.1} while some other applications will be discussed in Section \ref{sect: application}.

\section{Pluripotential theory in big cohomology classes}\label{sect: preliminaries}

\subsection{Non-pluripolar Monge-Amp\`ere measures}
We recall basic facts concerning pluripotential theory  of big cohomology classes. We borrow notation and terminology from \cite{BEGZ10}, and we also refer to this work for further details. 

Let $(X,\omega)$ be a compact K\"ahler manifold of dimension $n$. We fix $\theta$ a smooth closed $(1,1)$-form on $X$ such that $\{\theta\}$ is \emph{big}, i.e., there exists $\psi \in {\rm PSH}(X,\theta)$ such that $\theta +dd^c \psi \geq \vep \omega$ for some small constant $\vep>0$. Here, $d$ and $d^c$ are real differential operators defined as $d:=\partial +\bar{\partial},\,  d^c:=\frac{i}{2\pi}\left(\bar{\partial}-\partial \right).$ A function $\varphi: X\rightarrow \mathbb{R}\cup\{-\infty\}$ is called quasi-plurisubharmonic  if it is locally written as the sum of a plurisubharmonic function and a smooth function. $\varphi$ is called $\theta$-plurisubharmonic  ($\theta$\emph{-psh}) if it is quasi-psh such that $\theta+dd^c \varphi\geq 0$ in the sense of currents. We let $\psh(X,\theta)$ denote the set of $\theta$-psh functions which are not identically $-\infty$ (equivalently, it consists of $\theta$-psh functions which are integrable on $X$).

A $\theta$-psh function $\varphi$ is said to have \emph{analytic singularities} if there exists $c>0$ such that locally on $X$,
$$
\varphi=\frac{c}{2}\log\sum_{j=1}^{N}|f_j|^2+u,
$$
where $u$ is smooth and $f_1,\dots,f_N$ are local holomorphic functions. The \emph{ample locus} $\Amp(\{\theta\})$ of $\{ \theta\}$ is the set of points $x\in X$ such that there exists a K\"ahler current $T\in \{ \theta \}$ with analytic singularities and smooth in a neighbourhood of $x$. The ample locus $\Amp(\{\theta\})$ is a Zariski open subset, and it is nonempty \cite{Bou04}.

Let $x \in X$. Fixing a holomorphic chart $x \in U \subset X$, the \emph{Lelong number} $\nu(\varphi,x)$ of $\varphi \in \textup{PSH}(X,\theta)$ is defined as follows:
\begin{equation}\label{eq: Lelongdef}
\nu(\varphi,x) = \sup\{ \gamma \geq 0 \textup{ s.t. } \varphi(z) \leq \gamma \log \|z-x \| + O(1) \textup{ on } U\}.
\end{equation}
One can also associate to $\varphi$ a collection of \emph{multiplier ideal sheafs} $\mathcal I (t\varphi), \ t \geq 0,$ whose germs are defined by:

\begin{equation}\label{eq: MultIdealdef}
\mathcal I(t\varphi,x) = \{ f \in \mathcal O_x \textup{ s.t. } \int_V |f|e^{-t\varphi} \omega^n< \infty \textup{ for some open set } x\in V \subset X\}.
\end{equation}

If $\varphi$ and $\varphi'$ are two $\theta$-psh functions on $X$, then $\varphi'$ is said to be \emph{less singular} than $\varphi$ if they satisfy $\varphi\le\varphi'+C$ for some $C\in \mathbb{R}$. A  $\theta$-psh function $\varphi$ is said to have \emph{minimal singularities} if it is less singular than any other $\theta$-psh function.  Such $\theta$-psh functions with minimal singularities always exist, one can consider for example
\begin{equation*}
V_\theta:=\sup\left\{ \varphi\,\,\theta\text{-psh}, \varphi\le 0\text{ on } X \right \}. 
\end{equation*}
Trivially, a $\theta$-psh function with minimal singularities has locally bounded potential in $\Amp(\{\theta\})$.  It follows from Demailly's approximation theorem that $V_{\theta}$ is continuous in the ample locus ${\rm Amp}(\theta)$. 

More generally, if $f$ is a function on $X$, we define the Monge-Amp\`ere envelope of $f$ in the class $\psh(X,\theta)$ by
\[
P_{\theta}(f) := \left(\sup \{u\in \psh(X,\theta) \setdef u\leq f\}\right)^*,
\]
with the convention that $\sup\emptyset =-\infty$. Observe that $P_{\theta}(f)$ is a $\theta$-psh function on $X$ if and only if there exists some $u\in \psh(X,\theta)$ lying below $f$. Note also that $V_{\theta}=P_{\theta}(0)$.

Given $T_1:=\theta_1+dd^c\varphi_1,..., $ $ T_p:=\theta_p+dd^c \varphi_p$   positive $(1,1)$-currents, where $\theta_j$ are closed smooth $(1,1)$-forms, following the construction of Bedford-Taylor \cite{BT87} in the local setting, it has been shown in \cite{BEGZ10} that the sequence of currents 
\[
{\bf 1}_{\bigcap_j\{\varphi_j>V_{\theta_j}-k\}}(\theta_1+dd^c \max(\varphi_1, V_{\theta_1}-k))\wedge...\wedge (\theta_p+dd^c\max(\varphi_p, V_{\theta_p}-k))
\] 
is non-decreasing in $k$ and converges weakly to the so called \emph{non-pluripolar product} 
\[
\langle T_1\wedge\ldots\wedge T_p\rangle= \langle \theta_{\varphi_1 } \wedge\ldots\wedge\theta_{\varphi_1 }\rangle .
\]
The resulting positive $(p,p)$-current does not charge pluripolar sets and it is \emph{closed}. The particular case when $T_1=\cdots =T_p$ will be important for us in the sequel. For a $\theta$-psh function $\varphi$, the \emph{non-pluripolar complex Monge-Amp{\`e}re measure} of $\varphi$ is
$$
\theta_\varphi^n:=\langle(\theta+dd^c\varphi)^n\rangle.
$$
The volume of a big class $\{ \theta\}$  is defined by 
\[
{\rm Vol}(\{\theta\}):= \int_{{\rm Amp}(\{\theta\})} \theta_{V_\theta}^n.
\]

Alternatively, by \cite[Theorem 1.16]{BEGZ10}, in the above expression one can replace $V_\theta$ with any $\theta$-psh function with minimal singularities. A $\theta$-psh function $\varphi$ is said to have \emph{full Monge-Amp\`ere mass} if
\[
\int_X \theta_\varphi^n=\vol(\{\theta\}),
\]
and we then write $\varphi\in \mathcal{E}(X,\theta)$. Let us stress that since the non-pluripolar product does not charge pluripolar sets, for a general $\theta$-psh function $\varphi$ we only have $\vol(\{\theta\})\geq \int_X  \theta_\varphi^n$.   

By a \emph{weight function}, we mean a smooth increasing function $\chi:\mathbb{R}\to\mathbb{R}$ such that $\chi(0)=0$ and $\chi(-\infty)=-\infty$. 
We say that $\varphi \in {\rm PSH}(X,\theta)$ has finite $\chi$-energy if 
$$
E_\chi(\varphi) :=\int_X(-\chi)(\varphi-V_{\theta}) \theta_\varphi^n <+\infty.
$$
We denote by $\mathcal{E}_{\chi}(X,\theta)$ the set of full mass $\theta$-psh potentials having finite $\chi$-energy. If $\mathcal W^-$ denotes the set of weights $\chi$ that are convex on $\mathbb{R}^-$, then by \cite[Proposition 2.11]{BEGZ10} we have
\begin{equation}\label{eq: E_union}
\mathcal E(X,\theta) := \bigcup_{\chi \in \mathcal W^-} \mathcal E_\chi(X,\theta).
\end{equation}
In the special case when $\chi=Id$ we simply denote  the space $\mathcal{E}_{\chi}(X,\theta)$ by $\mathcal{E}^1(X,\theta)$. 
When $\varphi \in \textup{PSH}(X,\theta)$ has the same singularity type as $V_\theta$, the Monge-Amp\`ere energy (in the class $\{\theta\}$) is defined by the formula
\[
{\rm I}(\varphi):= \frac{1}{(n+1){\rm Vol}(\theta)}\sum_{k=0}^n \int_X (\varphi-V_{\theta})\langle \theta_{\varphi}^k \wedge \theta_{V_{\theta}}^{n-k}\rangle.
\]
For general $\varphi \in \textup{PSH}(X,\theta)$, using the monotonicity property of $\AM$, we have the following definition:
$$\AM(\varphi) := \lim_{k \to \infty}\AM(\max(\varphi,V_\theta - k)).$$
Though the above limit exists, it is possible that $\AM(\varphi)=-\infty$. It was proved in \cite{BEGZ10} that $\varphi\in \mathcal{E}^1(X,\theta)$ if and only if ${\rm I}(\varphi)$ is finite.  Moreover, ${\rm I}$ is continuous under monotone and uniform convergence  \cite[Proposition 2.10,Theorem 2.17]{BEGZ10}.

\subsection{Degenerate complex Monge-Amp\`ere equation}
We summarize recent results on the resolution of degenerate complex Monge-Amp\`ere equations in big cohomology classes. The main source are \cite{BEGZ10} and \cite{BBGZ13}. Let $\mu$ be a non-pluripolar  measure on $X$, i.e. a positive Borel measure that puts no mass on pluripolar sets. We want to solve the complex  Monge-Am\`ere equation 
\begin{equation}
\label{eq: BBGZ 1}
\theta _{\varphi}^n =e^{\beta \varphi} \mu, \ \ \varphi \in \mathcal E^1(X,\omega), 
\end{equation}
where $\beta>0$ is a constant. As the treatment is the same for all $\beta$, we assume that $\beta=1$.  If $\mu$ is a smooth volume form (or it has bounded density with respect to $\omega^n$) then one can use a fixed point argument \cite[Section 6.1]{BEGZ10} to solve the equation.  For the general case we use the variational method of \cite{BBGZ13}  to show existence of a solution $\varphi\in \mathcal{E}^1(X,\theta)$.  The proof that we give below is extracted from \cite{BBGZ13},  except  for the argument of Theorem \ref{thm: derivative}, which is inspired from \cite{LN15}.  The main point is to make it clear that the result is independent of \cite{BD12}. 

\subsubsection{The variational method.}
Let $\mu$  be a non pluripolar positive measure on $X$. For convenience we normalize  $\theta$ so that  its volume is $1$.

Consider the following functional 
\[
F(\varphi) := {\rm I}(\varphi) -L_{\mu}(\varphi), \ \ \varphi \in \textup{PSH}(X,\omega),
\]
where  $L_{\mu}(\varphi):=\int_X e^{\varphi} d\mu$. It follows from \cite{BEGZ10} that  ${\rm I}$ is upper semicontinuous with respect to $L^1$-convergence.  Assume that  $\varphi_j$ is a sequence of $\theta$-psh functions converging in $L^1(X,\omega^n)$ to $\varphi\in {\rm PSH}(X,\theta)$. Then by Hartogs lemma $\sup_X \varphi_j$ is uniformly bounded, hence the sequence $e^{\varphi_j}$ stays in ${\rm PSH}(X,A\omega)$ for some positive constant $A$ and  $e^{\varphi_j}$ converges to $e^{\varphi}$ in $L^1(X,\omega^n)$. Since $\mu$ is non pluripolar it thus follows from an argument  due to Cegrell \cite[Lemma 5.2]{Ce98} (see \cite[Theorem 3.10]{BBGZ13} or \cite[Lemma 11.5]{GZ17}  for a proof in the compact case) that $L_{\mu}(\varphi_j)\to L_{\mu}(\varphi)$. This means $L_{\mu}$  is continuous with respect to the $L^1$-topology, hence $F$ is upper semicontinuous on $\mathcal{E}^1(X,\theta)$. 
\begin{prop}
\label{prop: existence of maximizer}
There exists $\varphi\in \mathcal{E}^1(X,\theta)$ such that $F(\varphi)=\sup_{\psi\in \mathcal{E}^1(X,\theta)}F(\psi)$. 
\end{prop}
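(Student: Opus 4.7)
The plan is to apply the standard variational argument of Berman--Boucksom--Guedj--Zeriahi, with care taken to manage the translation degree of freedom in the normalization of a maximizing sequence.

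Set $M := \sup_{\mathcal{E}^1(X,\theta)} F$. Since $V_\theta \in \mathcal{E}^1(X,\theta)$ and $F(V_\theta) = -L_\mu(V_\theta)$ is finite, $M > -\infty$ (and the coercivity analysis below also gives $M < +\infty$). Choose a maximizing sequence $\psi_j \in \mathcal{E}^1(X,\theta)$ with $F(\psi_j) \to M$, and normalize by setting $c_j := \sup_X \psi_j$ and $\varphi_j := \psi_j - c_j$, so that $\sup_X \varphi_j = 0$ and hence $\varphi_j \leq V_\theta$. Using the scaling properties $\AM(\varphi + c) = \AM(\varphi) + c$ and $L_\mu(\varphi + c) = e^c L_\mu(\varphi)$, together with monotonicity $\AM(\varphi_j) \leq \AM(V_\theta) = 0$,
\[
\AM(\psi_j) = \AM(\varphi_j) + c_j \leq c_j, \qquad L_\mu(\psi_j) = e^{c_j} L_\mu(\varphi_j).
\]

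The key step is to bound $\{c_j\}$. The set $\{u \in \psh(X,\theta) : \sup_X u = 0\}$ is $L^1$-compact, so after extraction $\varphi_j \to \varphi_\infty$ in $L^1$, and the continuity of $L_\mu$ recorded just above the proposition gives $L_\mu(\varphi_j) \to L_\mu(\varphi_\infty) > 0$ (positivity holds because $\mu$ is non-pluripolar and $\varphi_\infty > -\infty$ outside a pluripolar set). If $c_j \to +\infty$ along a subsequence, then $L_\mu(\psi_j) = e^{c_j} L_\mu(\varphi_j)$ grows exponentially while $\AM(\psi_j) \leq c_j$ grows only linearly, forcing $F(\psi_j) \to -\infty$ and contradicting $F(\psi_j) \to M$. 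If $c_j \to -\infty$, the crude bound $F(\psi_j) \leq \AM(\psi_j) \leq c_j \to -\infty$ gives the same contradiction. Extracting once more, $c_j \to c_\infty \in \mathbb{R}$.

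Set $\psi_\infty := \varphi_\infty + c_\infty \in \psh(X,\theta)$, so $\psi_j \to \psi_\infty$ in $L^1$. Since $c_j$ is bounded, $L_\mu(\psi_j)$ is bounded above, and hence $\AM(\psi_j) = F(\psi_j) + L_\mu(\psi_j)$ is bounded below. Upper semicontinuity of $\AM$ under $L^1$-convergence (recorded above) then forces $\AM(\psi_\infty) \geq \limsup_j \AM(\psi_j) > -\infty$, so $\psi_\infty \in \mathcal{E}^1(X,\theta)$. Finally, the upper semicontinuity of $F$ on $\mathcal{E}^1(X,\theta)$ established just before the proposition gives $F(\psi_\infty) \geq \limsup_j F(\psi_j) = M$, so $\psi_\infty$ realizes the supremum. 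The main obstacle is precisely this coercivity analysis: because $\AM$ and $L_\mu$ scale very differently under constant translations (linearly versus exponentially) while $F$ itself is not translation-invariant, one must rule out that the maximizing sequence drifts off to $\pm\infty$ in altitude before the compactness/upper semicontinuity machinery can actually produce a maximizer inside $\mathcal{E}^1(X,\theta)$.
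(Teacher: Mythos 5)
Your proof is correct and follows essentially the same route as the paper: rule out $\sup_X\psi_j\to+\infty$ by comparing the linear growth of $\AM$ (via $\AM\le\sup_X$) with the exponential growth of $L_\mu$ along the normalized, $L^1$-compact sequence, rule out $\sup_X\psi_j\to-\infty$ by $F\le\AM\le\sup_X$, then extract an $L^1$-limit and conclude with upper semicontinuity of $\AM$ and continuity of $L_\mu$. The explicit bookkeeping with $c_j$ versus the paper's direct contradiction argument is only a cosmetic difference.
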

\begin{proof}
Let $(\varphi_j)$ be a sequence in $\mathcal{E}^1(X,\theta)$ such that $\lim_{j}F(\varphi_j)=\sup_{\mathcal{E}^1(X,\theta)}F>-\infty$. We claim that $\sup_X \varphi_j$ is uniformly bounded from above. Indeed, assume that it were not the case. Then by relabeling the sequence we can assume that $\sup_X \varphi_j$ increases to $+\infty$. By compactness property \cite[Proposition 2.7]{GZ05} it follows that the sequence $\psi_j:=\varphi_j-\sup_X \varphi_j$ converges in $L^1(X,\omega^n)$ to some $\psi\in \psh(X,\theta)$ such that $\sup_X \psi=0$.  In particular $\int_X e^{\psi} d\mu >0$. It thus follows that 
\[
\int_X e^{\varphi_j}d\mu = e^{\sup_X \varphi_j}  \int_X e^{\psi_j} d\mu  \geq c e^{\sup_X \varphi_j}
\]
for some positive constant $c$. Since ${\rm I}(\varphi_j)\leq \sup_X\varphi_j$, the above inequality gives that $F(\varphi_j)$ converges to $-\infty$, a contradiction. Thus $\sup_X \varphi_j$ is bounded from above as claimed. Since $F(\varphi_j)\leq {\rm I}(\varphi_j)\leq \sup_X \varphi_j$ it follows that ${\rm I}(\varphi_j)$ and hence $\sup_X \varphi_j$ is also bounded from below. It follows again from \cite[Proposition 2.7]{GZ05} that a subsequence of $\varphi_j$ (still denoted by $\varphi_j$) converges in $L^1(X,\omega^n)$ to some $\varphi\in \psh(X,\theta)$. Since ${\rm I}$ is upper semicontinuous  it follows that $\varphi\in \Ec^1(X,\theta)$. Moreover, by continuity of $L_{\mu}$ it follows that $F(\varphi) \geq \sup_{\Ec^1(X,\theta)} F$ completing the proof. 
\end{proof}

Next we prove that the maximizer obtained above is actually the solution to the complex Monge-Amp\`ere equation \eqref{eq: BBGZ 1}. The proof relies on a differentiability property of the Monge-Amp\`ere energy functional: 

\begin{theorem}
\label{thm: derivative}
Fix $\varphi\in \mathcal{E}^1(X,\theta)$ and let $\chi$ be a continuous real valued function $X$. Set $\varphi_t=P_{\theta}(\varphi+t\chi), \ t\in \mathbb{R}$. Then $t\mapsto I(\varphi_t)$ is differentiable and its derivative is given by 
\[
\frac{d}{dt}{\rm I}(\varphi_t) = \int_X \chi \theta_{\varphi_t}^n, \ \forall t\in \mathbb{R}. 
\]\end{theorem}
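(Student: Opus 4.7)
The plan is to establish two-sided bounds on the difference quotient $\bigl(\AM(\varphi_{t+s}) - \AM(\varphi_t)\bigr)/s$ that both converge to $\int_X \chi\, \theta_{\varphi_t}^n$. Write $M := \sup_X |\chi|$. First I would check that $\varphi_t \in \Ec^1(X,\theta)$: the function $\varphi + t\chi - |t|M$ is $\theta$-psh and lies below $\varphi + t\chi$, hence $\varphi_t \geq \varphi - 2|t|M$, while by construction $\varphi_t \leq \varphi + t\chi$ outside a pluripolar set. Thus $\varphi$ and $\varphi_t$ share the same singularity type and finite energy is preserved. The same comparison applied to the pairs $\varphi + t\chi$ and $\varphi + (t+s)\chi$ yields the uniform bound $|\varphi_{t+s} - \varphi_t| \leq |s| M$.

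Two ingredients then come into play. First, an \emph{orthogonality relation for envelopes}: for each $t$, the measure $\theta_{\varphi_t}^n$ is concentrated on the contact set $D_t := \{\varphi_t = \varphi + t\chi\}$. When the obstacle is bounded and continuous this is the standard balayage result recorded in \cite{BEGZ10}; in our setting, where $\varphi$ may take $-\infty$ values, one reduces to the bounded case by truncating $\varphi$ at $V_\theta - k$ and passing to the monotone limit. Second, the \emph{fundamental energy inequality} (in the present normalization $\vol(\{\theta\})=1$)
\[
\int_X (u-v)\, \theta_u^n \ \leq\ \AM(u) - \AM(v) \ \leq\ \int_X (u-v)\, \theta_v^n, \qquad u,v \in \Ec^1(X,\theta),
\]
which follows by writing the definition of $\AM$ as a telescoping sum and using $\int(u-v)\, dd^c(u-v)\wedge T \leq 0$ for each positive closed current $T$ built from products of $\theta_u$ and $\theta_v$.

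Applying the fundamental inequality to $(u,v) = (\varphi_{t+s},\varphi_t)$ and identifying the integrands on the contact sets via orthogonality: on $D_{t+s}$ we have $\varphi_{t+s} = \varphi+(t+s)\chi$ and $\varphi_t \leq \varphi+t\chi$, giving $\varphi_{t+s} - \varphi_t \geq s\chi$; on $D_t$ the analogous comparison yields $\varphi_{t+s}-\varphi_t \leq s\chi$. For $s > 0$ this produces
\[
s\int_X \chi\, \theta_{\varphi_{t+s}}^n \ \leq\ \AM(\varphi_{t+s})-\AM(\varphi_t) \ \leq\ s \int_X \chi\, \theta_{\varphi_t}^n,
\]
with the inequalities flipped for $s < 0$. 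Dividing by $s$ and letting $s \to 0$, the outer bound is constant, while the inner bound converges to it: the uniform convergence $\varphi_{t+s} \to \varphi_t$ gives weak convergence $\theta_{\varphi_{t+s}}^n \to \theta_{\varphi_t}^n$, and continuity of $\chi$ does the rest. Both one-sided derivatives of $t \mapsto \AM(\varphi_t)$ thus exist and coincide with $\int_X \chi\, \theta_{\varphi_t}^n$.

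The main obstacle I anticipate is the rigorous justification of the orthogonality relation in the finite-energy setting: $\varphi + t\chi$ is only upper semi-continuous and may equal $-\infty$ on a pluripolar set, so the classical balayage argument applies only after truncating $\varphi$ by $\max(\varphi,V_\theta-k)$. Threading this truncation through the identification of the support of $\theta_{\varphi_t}^n$, using the monotonicity properties of the non-pluripolar product from \cite{BEGZ10}, is the delicate step; once this is in place, the squeeze above closes the argument immediately.
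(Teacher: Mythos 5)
Your squeeze itself is fine (it is the same concavity estimate the paper uses), but the load-bearing step is the \emph{orthogonality relation}: that $\theta_{\varphi_t}^n$ is concentrated on $\{\varphi_t=\varphi+t\chi\}$ when $\varphi\in\Ec^1(X,\theta)$ is a genuinely singular potential. Your proposed justification --- truncate $\varphi$ at $V_\theta-k$ and invoke ``standard balayage'' --- does not work as stated. Truncation removes the unboundedness but not the discontinuity: the obstacle $\max(\varphi,V_\theta-k)+t\chi$ is still only usc and quasi-continuous, so the non-contact set $\{P_\theta(f)<f\}$ is merely quasi-open, not open, and the classical balayage argument (solving local Dirichlet problems on balls inside an open non-contact set) has no foothold. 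The correct statement in this generality is exactly Proposition \ref{prop: BD vanish general} of the paper, and its proof is not a soft limit: it approximates the obstacle from above by continuous functions and controls the error on quasi-open sets via the comparison of capacities (Theorem \ref{thm: comparison of capacity}). Moreover, within this paper that capacity comparison is derived from Theorem \ref{thm: weak BD}, which rests on Theorem \ref{thm: maximizers are solutions}, which rests on the very theorem you are proving --- so quoting the envelope-support result here would be circular (unless you import the deep regularity theorem of Berman--Demailly, which Section 2 is explicitly designed to avoid; see Remark \ref{rem: BBGZ independent of BD12}). Passing the concentration statement through the monotone limit in $k$ is also not free; compare the proof of Theorem \ref{thm: Dravas criterion 1}, which again needs the capacity machinery.

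The paper's proof is arranged precisely to dodge this: it first proves the derivative formula when the obstacle is a \emph{continuous} function $u$ (there balayage is legitimate and the envelopes $P_\theta(u+t\chi)$ have minimal singularities, so the convergence theorems of \cite{BEGZ10} apply), and then, for $\varphi\in\Ec^1(X,\theta)$, approximates $\varphi$ from above by continuous $\varphi_j$, integrates the formula to get $\AM(\varphi_{t,j})-\AM(\varphi_{0,j})=\int_0^t\int_X\chi\,\theta_{\varphi_{s,j}}^n\,ds$, and passes to the decreasing limit in $j$ before differentiating again. If you want to keep your direct squeeze, you must either supply an independent proof of the orthogonality relation for quasi-continuous usc obstacles, or restructure along the paper's lines. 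A small additional slip: $\varphi+t\chi-|t|\sup_X|\chi|$ is not $\theta$-psh (the correct competitor is $\varphi-|t|\sup_X|\chi|$), though the bound $\varphi_t\geq\varphi-2|t|\sup_X|\chi|$ you draw from it is still true.
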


Note that $\varphi_t\geq \varphi - |t|\sup_X |\chi|$, hence $\varphi_t \in \mathcal{E}^1(X,\theta)$ for all $t\in \mathbb{R}$.
This result was first proved in  \cite[Lemma 4.2]{BBGZ13} using \cite{BB08}. A simplification of the original argument has been given in \cite{LN15}, and we follow this approach here.

\begin{proof}
Let $u$ be a continuous function on $X$ and set $u_t:=P_{\theta}(u+t\chi)$, $t\in \mathbb{R}$. Then for each $t\in \mathbb{R}$, $u_t$ is a $\theta$-plurisubharmonic function with minimal singularities. We claim that 
\[
\frac{d}{dt} {\rm I}(u_t) = \int_X \chi \theta_{u_t}^n, \ \forall t\in \mathbb{R}. 
\]
It suffices to prove the claim for $t = 0$. We only prove the equality for the right-derivative since the same argument can be applied to deal with the left derivative. We fix $t > 0$. It follows from the concavity of ${\rm I}$ \cite{BEGZ10} that 
\[
\int_X (u_{t}-u_0) \theta_{u_t}^n \leq {\rm  I}(u_{t})-{\rm I}(u_0) \leq \int_X (u_{t}-u_{0}) \theta_{u_0}^n.
\]
On the other hand, since $u+t\chi$ is continuous on $X$, it follows from a balayage argument (see \cite{BT82}) that $\theta_{u_t}^n$ is supported on the contact set $\{u_t=u+t\chi\}$. It thus follows that 
\[
\int_X (u_{t}-u_0) \theta_{u_t}^n = \int_X (u + t\chi-u_0) \theta_{u_t}^n \geq t \int_X \chi \theta_{u_t}^n. 
\]
By a similar argument we also get
\[
\int_X (u_{t}-u_0) \theta_{u_0} ^n = \int_X (u_t -u) \theta_{u_0} ^n \leq t \int_X \chi \theta_{u_0} ^n. 
\]
We note that $u_t$ converges uniformly to $u_0$ as $t\to 0^+$, hence by \cite{BEGZ10} we have that $\theta_{u_t}^n$ converges weakly to $\theta_{u_0}^n$. Since $\chi$ is continuous, we can divide all of the above estimates with $t>0$, and let $t\to 0$  to finish the proof of the claim.  

Now, we come back to the proof of the theorem. We approximate $\varphi$ from above by a sequence $\varphi_j$ of continuous functions on $X$. For each $j$, we set $\varphi_{t,j}:=P_{\theta}(\varphi_j +t\chi)$ and note that $\varphi_{t,j}$ decreases pointwise to $\varphi_t$ as $j\to +\infty$. Since $\chi$ is continuous on $X$ and  $\varphi_{t,j}$ converges uniformly to $\varphi_{s,j}$ as $t\to s$ it follows from continuity of the complex Monge-Amp\`ere operator together with our claim  that the function $t\mapsto {\rm I}(\varphi_{t,j})$ is of class $\mathcal{C}^1$ on $\mathbb{R}$. We thus have that 
\[
{\rm I}(\varphi_{t,j})-{\rm I}(\varphi_{0,j}) =\int_{0}^t \int_X \chi \theta_{\varphi_{s,j}}^n ds. 
\]
Letting $j\to +\infty$, and using the dominated convergence theorem we obtain 
\[
{\rm I}(\varphi_{t})-{\rm I}(\varphi_{0}) =\int_{0}^t \int_X \chi \theta_{\varphi_{s}}^n ds. 
\]
By continuity of the Monge-Amp\`ere operator the function $s\mapsto \int_X \chi \theta_{\varphi_s}^n$ is continuous on $\mathbb{R}$. Therefore, from the above equality we see that $t\mapsto {\rm I}(\varphi_t)$ is differentiable and its derivative is exactly as in the statement of the theorem. 
\end{proof}
We are now ready to solve the equation \eqref{eq: BBGZ 1}.
\begin{theorem}
\label{thm: maximizers are solutions}
Let $\mu$ be a non pluripolar positive measure on $X$. Then there exists a unique $\varphi\in {\mathcal{E}^1}(X,\theta)$  solving \eqref{eq: BBGZ 1}. Moreover, if $\mu=f\omega^n$ for some bounded nonnegative function $f$ then $\varphi$ has minimal singularities.
\end{theorem}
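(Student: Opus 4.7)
The plan is to combine the existence of a maximizer (Proposition \ref{prop: existence of maximizer}) with the differentiability result (Theorem \ref{thm: derivative}) to verify the Euler--Lagrange equation, and then handle uniqueness and minimal singularities separately.

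First, I would take the maximizer $\varphi \in \mathcal{E}^1(X,\theta)$ of the functional $F = \AM - L_\mu$. To show that $\varphi$ solves \eqref{eq: BBGZ 1}, fix an arbitrary continuous function $\chi$ on $X$ and set $\varphi_t := P_{\theta}(\varphi + t\chi)$. Note $\varphi_t \leq \varphi + t\chi$ pointwise, so $L_\mu(\varphi_t) \leq L_\mu(\varphi+t\chi)$. Consequently, the auxiliary function
\[
g(t) := \AM(\varphi_t) - L_\mu(\varphi + t\chi)
\]
satisfies $g(t) \leq \AM(\varphi_t) - L_\mu(\varphi_t) = F(\varphi_t) \leq F(\varphi) = g(0)$, since $\varphi_t \in \mathcal{E}^1(X,\theta)$. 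Thus $g$ achieves a maximum at $t = 0$. By Theorem \ref{thm: derivative} the map $t \mapsto \AM(\varphi_t)$ is differentiable at $0$ with derivative $\int_X \chi\, \theta_\varphi^n$, while $t \mapsto L_\mu(\varphi + t\chi) = \int_X e^{\varphi + t\chi}d\mu$ is differentiable by dominated convergence (bounded $\chi$, integrability of $e^\varphi d\mu$ which is finite since $\sup_X \varphi < +\infty$) with derivative $\int_X \chi e^\varphi d\mu$. The vanishing $g'(0) = 0$ yields
\[
\int_X \chi \,\theta_\varphi^n = \int_X \chi e^\varphi\, d\mu
\]
for every continuous $\chi$, hence $\theta_\varphi^n = e^\varphi \mu$ as Borel measures.

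For uniqueness, suppose $\varphi_1, \varphi_2 \in \mathcal{E}^1(X,\theta)$ are two solutions. I would apply the domination principle in the class $\mathcal{E}^1$ (a standard tool in \cite{BEGZ10,BBGZ13}): on the set $\{\varphi_1 < \varphi_2\}$, we have
\[
{\bf 1}_{\{\varphi_1 < \varphi_2\}} \theta_{\varphi_2}^n = {\bf 1}_{\{\varphi_1 < \varphi_2\}} e^{\varphi_2} d\mu \geq {\bf 1}_{\{\varphi_1 < \varphi_2\}} e^{\varphi_1} d\mu = {\bf 1}_{\{\varphi_1 < \varphi_2\}} \theta_{\varphi_1}^n,
\]
which together with the comparison/domination principle forces $\mu(\{\varphi_1 < \varphi_2\}) = 0$; the strict monotonicity of $t \mapsto e^t$ then gives $\varphi_2 \leq \varphi_1$ almost everywhere for $\mu$, and a standard argument extends this to $\varphi_2 \leq \varphi_1$ everywhere. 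Swapping the roles yields $\varphi_1 = \varphi_2$.

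Finally, for the regularity claim: if $\mu = f \omega^n$ with $f \geq 0$ bounded, observe that $e^\varphi f$ is bounded (since $\varphi$ is $\theta$-psh, hence bounded above). The fixed-point argument from \cite[Section 6.1]{BEGZ10} cited at the beginning of this subsection produces a solution $\tilde\varphi \in \mathcal{E}^1(X,\theta)$ of \eqref{eq: BBGZ 1} with minimal singularities; by the uniqueness proved above, $\varphi = \tilde\varphi$, giving the desired regularity. The main technical step is the application of Theorem \ref{thm: derivative}, which already encapsulates the delicate differentiability issue specific to the big (non-K\"ahler) setting; everything else follows from the by now classical variational scheme.
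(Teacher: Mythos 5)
Your proposal is correct and follows essentially the same route as the paper: take the maximizer of $F$ from Proposition \ref{prop: existence of maximizer}, differentiate $t\mapsto \AM(P_\theta(\varphi+t\chi))$ via Theorem \ref{thm: derivative} (plus dominated convergence for $L_\mu$) to obtain the Euler--Lagrange equation, and deduce uniqueness from the comparison/domination principle, i.e.\ Lemma \ref{lem: domination principle}. The only (harmless) deviation is the final regularity step: the paper applies the bounded-density result \cite[Theorem 4.1]{BEGZ10} directly to the right-hand side $e^{\varphi}f\omega^n$, whereas you produce a minimal-singularity solution by the fixed-point argument of \cite[Section 6.1]{BEGZ10} and then identify it with $\varphi$ by uniqueness; both are valid.
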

\begin{proof}
As before we can assume that $\beta=1$. It follows from Proposition \ref{prop: existence of maximizer} that there exists $\varphi\in \mathcal{E}^1(X,\theta)$ such that $F(\varphi)=\sup_{\mathcal{E}^1(X,\theta)} F$. Fix a continuous function $\chi$ and set
\[
g(t):=I(\varphi_t) -\int_X e^{\varphi +t\chi} d\mu, \ t\in \mathbb{R},
\]
where $\varphi_t:=P_{\theta}(\varphi+t\chi)\in \mathcal{E}^1(X,\theta)$. Since $\varphi_t\leq \varphi+t\chi$ and since $\varphi$ maximizes $F$ on $\mathcal{E}^1(X,\theta)$ we have 
\[
g(0) =F(\varphi) \geq F(\varphi_t) \geq g(t), \ \forall t\in \mathbb{R}.  
\]
Thus $g$ attains its maximum at $0$. It follows from Theorem \ref{thm: derivative} and the dominated convergence theorem that $g$ is differentiable at $0$, hence $g'(0)=0$ which means 
\[
\int_X \chi \theta_{\varphi}^n =\int_X \chi e^{\varphi} d\mu.  
\]
Since $\chi$ was chosen arbitrarily it follows that the equation \eqref{eq: BBGZ 1} is satisfied in the weak sense of measure. 

The uniqueness follows from Lemma \ref{lem: domination principle} below. Finally if $\mu$ has bounded density then the right-hand side in \eqref{eq: BBGZ 1} has bounded density (because $\varphi$ is bounded from above)  hence it follows from \cite[Thereom 4.1]{BEGZ10} that $\varphi$ has minimal singularities. 
\end{proof}

\subsubsection{The domination principle.}
The following domination principle was proved in \cite[Corollary 2.5]{BEGZ10} for two $\theta$-psh functions $\varphi,\psi$ with $\varphi$ having minimal singularities. The argument of  Dinew \cite{BL12}  gives a generalization of this result to the case when $\varphi\in \Ec(X,\theta)$  does not necessarily have minimal singularities. 

\begin{prop}
        \label{prop: domination principle}
        Let $\varphi,\psi$ be $\theta$-psh functions such that $\varphi\in \Ec(X,\theta)$. If $\psi\leq \varphi$ almost everywhere with respect to $\theta_{\varphi}^n$ then $\psi\leq \varphi$ everywhere. 
\end{prop}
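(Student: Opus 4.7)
The plan is to deduce the result from the uniqueness of solutions to the complex Monge--Amp\`ere equation in the class $\Ec(X,\theta)$, following the strategy of Dinew extended to our setting. The overall scheme is to show that $\psi\leq\varphi+\vep$ for every $\vep>0$, and then let $\vep\to 0$.

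First, for a fixed $\vep>0$, I would introduce the auxiliary $\theta$-psh function $v_\vep:=\max(\psi-\vep,\varphi)$. It satisfies $v_\vep\geq\varphi$, so being less singular than $\varphi\in\Ec(X,\theta)$, it also belongs to $\Ec(X,\theta)$ by monotonicity of the full mass class. In particular, $\int_X\theta_{v_\vep}^n=\vol(\{\theta\})=\int_X\theta_\varphi^n$.

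Next, I would compare the two Monge--Amp\`ere measures. The set $O:=\{\psi-\vep<\varphi\}$ is plurifine open, and on $O$ we have $v_\vep=\varphi$; by the locality of the non-pluripolar product in the plurifine topology, $\theta_{v_\vep}^n|_O=\theta_\varphi^n|_O$. Since by hypothesis $\psi\leq\varphi$ holds $\theta_\varphi^n$-almost everywhere, the set $O$ carries the full mass of $\theta_\varphi^n$, which yields $\theta_{v_\vep}^n\geq\theta_\varphi^n$ as positive measures. Combined with the equality of total masses from the previous step, this forces $\theta_{v_\vep}^n=\theta_\varphi^n$.

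At this stage, the main obstacle is to conclude that $v_\vep=\varphi+c_\vep$ for some constant $c_\vep$. This is precisely uniqueness (up to a constant) of solutions to the complex Monge--Amp\`ere equation in the class $\Ec(X,\theta)$, which goes beyond the classical statement for potentials with minimal singularities (as in \cite[Corollary 2.5]{BEGZ10}). This is exactly where Dinew's argument from \cite{BL12} would be invoked. Once this is granted, the proof terminates quickly: $v_\vep\geq\varphi$ gives $c_\vep\geq 0$, and if $c_\vep>0$ then $v_\vep>\varphi$ everywhere, hence $\psi-\vep>\varphi$ everywhere, contradicting the hypothesis (which holds on a set of positive $\theta_\varphi^n$-mass since $\vol(\{\theta\})>0$). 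Therefore $c_\vep=0$, so $\psi\leq\varphi+\vep$, and letting $\vep\to 0^+$ finishes the argument.
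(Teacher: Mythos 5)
Your proposal is correct and follows essentially the same route as the paper: fix $\vep>0$, consider $\max(\psi-\vep,\varphi)$, use plurifine locality and the full mass hypothesis to get $\theta_{\max(\psi-\vep,\varphi)}^n=\theta_\varphi^n$, invoke the uniqueness theorem for full mass currents (Dinew's argument, \cite[Theorem A]{BEGZ10}) to conclude equality up to a constant, check the constant vanishes, and let $\vep\to0$. The only cosmetic difference is your explicit contradiction argument for why the constant is zero, which the paper leaves as an easy remark.
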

\begin{proof}
Fix $t>0$. As $\{\varphi > \psi -t \}$ is plurifine open, it follows from locality of the non-pluripolar product with respect to the plurifine topology \cite[Proposition 1.4]{BEGZ10} that
\[
\theta_{\max(\varphi,\psi-t)}^n \geq {\bf 1}_{\{\varphi>\psi-t\}}\theta_{\max(\varphi,\psi-t)}^n = {\bf 1}_{\{\varphi>\psi-t\}} \theta_{\varphi}^n=\theta_{\varphi}^n,
\]
where in the last identity we used the assumption that $\theta_\varphi^n(\{\varphi< \psi\})=0$.  
As $\max(\varphi,\psi-t)$ also has full mass, the above inequality becomes equality. This together with the uniqueness theorem, proved in \cite[Theorem A]{BEGZ10} which is an adaptation of the original proof in the K\"ahler case due to Dinew \cite{Dw09}, gives that $\max(\varphi,\psi-t)=\varphi+C$, for some constant $C$ which can be easily seen to be zero.  Letting $t\to 0^+$ we obtain the desired result.
\end{proof}

\begin{lemma}
        \label{lem: domination principle}
        Fix $\beta>0$ and $\mu$ a non-pluripolar positive measure on $X$. Assume that $v\in \mathcal{E}(X,\theta)$ and $u\in {\rm PSH}(X,\theta)$ satisfy 
        \[
        \theta_{u}^n \geq e^{\beta u} e^{-\phi}\mu \ ; \ \theta_{v}^n\leq e^{\beta v} e^{-\phi}\mu,
        \]
        where $\phi$ is some Borel measurable function on $X$. Then $u\leq v$ on $X$. 
\end{lemma}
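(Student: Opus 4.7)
The plan is to reduce the inequality $u \leq v$ to Proposition \ref{prop: domination principle} by showing that $\{u > v\}$ has zero $\theta_v^n$-measure. The main device is to compare the full-mass currents of $v$ and an auxiliary function that interpolates between $u$ and $v$, exploiting the plurifine locality of the non-pluripolar product.

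Concretely, for each $\epsilon > 0$ I would introduce $w_\epsilon := \max(u - \epsilon, v)$. Since $w_\epsilon \geq v$ and $v \in \mathcal{E}(X,\theta)$, monotonicity of the non-pluripolar mass forces $w_\epsilon \in \mathcal{E}(X,\theta)$, so $\int_X \theta_{w_\epsilon}^n = \int_X \theta_v^n = \vol(\{\theta\})$. I would then split $X$ into the plurifine open sets $A_\epsilon := \{u - \epsilon > v\}$, $B_\epsilon := \{u - \epsilon < v\}$ and the contact set $C_\epsilon := \{u - \epsilon = v\}$. By plurifine locality \cite[Proposition 1.4]{BEGZ10}, $\mathbf{1}_{A_\epsilon}\theta_{w_\epsilon}^n = \mathbf{1}_{A_\epsilon}\theta_u^n$ and $\mathbf{1}_{B_\epsilon}\theta_{w_\epsilon}^n = \mathbf{1}_{B_\epsilon}\theta_v^n$. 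Subtracting the $B_\epsilon$ contribution from the equality of total masses and plugging in the lower bound $\theta_u^n \geq e^{\beta u}e^{-\phi}\mu$ yields the key estimate
$$\int_{A_\epsilon} e^{\beta u}e^{-\phi}\,d\mu \;\leq\; \int_{A_\epsilon} \theta_u^n \;\leq\; \int_{A_\epsilon \cup C_\epsilon} \theta_v^n.$$

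Next I would observe that the family $\{C_\epsilon\}_{\epsilon > 0}$ is pairwise disjoint and $\theta_v^n$ is finite, so only countably many $\epsilon$ satisfy $\theta_v^n(C_\epsilon) > 0$. Picking a sequence $\epsilon_k \downarrow 0$ along which $\theta_v^n(C_{\epsilon_k}) = 0$, and combining the previous display with the upper bound $\theta_v^n \leq e^{\beta v}e^{-\phi}\mu$ together with the pointwise inequality $e^{\beta u} \geq e^{\beta \epsilon_k}e^{\beta v}$ on $A_{\epsilon_k}$, I obtain
$$(e^{\beta\epsilon_k} - 1)\int_{A_{\epsilon_k}} e^{\beta v}e^{-\phi}\,d\mu \;\leq\; 0,$$
which forces $\int_{A_{\epsilon_k}} e^{\beta v}e^{-\phi}\,d\mu = 0$ and hence $\theta_v^n(A_{\epsilon_k}) = 0$. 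Since $A_{\epsilon_k} \nearrow \{u > v\}$ as $k \to \infty$, monotone convergence gives $\theta_v^n(\{u > v\}) = 0$, and then Proposition \ref{prop: domination principle}, applied with $\varphi = v$ and $\psi = u$, finishes the proof.

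The main obstacle I anticipate is the careful bookkeeping at the contact set $C_\epsilon$: the plurifine locality only identifies $\theta_{w_\epsilon}^n$ with $\theta_u^n$ and $\theta_v^n$ on the strict inequality sets, and one needs both the finiteness of $\theta_v^n$ and the disjointness of the contact sets across $\epsilon$ to dispose of the residual contribution on $C_\epsilon$. The rest of the argument is essentially forced once the locality decomposition is in place and one has the crucial fact that $w_\epsilon$ inherits full mass from $v$, which is what allows the total masses of $\theta_{w_\epsilon}^n$ and $\theta_v^n$ to be equated.
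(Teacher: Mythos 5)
Your proof is correct and it ends exactly where the paper's does --- you establish $\theta_v^n(\{u>v\})=0$ and then invoke Proposition \ref{prop: domination principle} with $\varphi=v$, $\psi=u$ --- but the middle portion takes a genuinely more self-contained route. The paper disposes of the measure-theoretic step in one line by citing the comparison principle \cite[Corollary 2.3]{BEGZ10}, which yields the chain $\int_{\{v<u\}}e^{\beta u-\phi}d\mu\le\int_{\{v<u\}}\theta_u^n\le\int_{\{v<u\}}\theta_v^n\le\int_{\{v<u\}}e^{\beta v-\phi}d\mu\le\int_{\{v<u\}}e^{\beta u-\phi}d\mu$; equality throughout forces $\mu(\{v<u\})=0$ (non-pluripolarity of $\mu$ handling the polar part), hence $\theta_v^n(\{v<u\})=0$. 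You instead reprove the needed comparison inequality from scratch: $w_\epsilon=\max(u-\epsilon,v)$ has full mass because $\mathcal{E}(X,\theta)$ is stable under passing to less singular potentials (\cite[Proposition 2.14]{BEGZ10} together with \eqref{eq: E_union} is the precise fact you need, rather than a general ``monotonicity of masses'' statement), and plurifine locality plus equality of total masses gives $\int_{A_\epsilon}\theta_u^n\le\int_{A_\epsilon\cup C_\epsilon}\theta_v^n$, the $\epsilon$-shift playing the role of the strict-inequality set in the usual proof of the comparison principle. What your route buys is independence from \cite[Corollary 2.3]{BEGZ10}; what it costs is the contact-set bookkeeping, where two small touch-ups are needed. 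First, the sets $C_\epsilon=\{u-\epsilon=v\}$ are not literally pairwise disjoint, since each contains the pluripolar set $\{u=-\infty\}\cap\{v=-\infty\}$; they are disjoint off this set, and since $\theta_v^n$ charges no pluripolar sets your countability argument and the choice of $\epsilon_k$ with $\theta_v^n(C_{\epsilon_k})=0$ go through unchanged. Second, to cancel $\int_{A_{\epsilon_k}}e^{\beta v}e^{-\phi}d\mu$ from both sides of your final inequality you should record that this integral is finite, which follows because $e^{\beta\epsilon_k}\int_{A_{\epsilon_k}}e^{\beta v}e^{-\phi}d\mu\le\int_{A_{\epsilon_k}}\theta_u^n\le\vol(\{\theta\})$. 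With these one-line remarks added, your argument is complete.
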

\begin{proof}
        It follows from the comparison principle (see \cite[Corollary 2.3]{BEGZ10}) that 
\[
        \int_{\{v<u\}} e^{\beta u -\phi} d\mu \leq \int_{\{v<u\}} \theta_u^n \leq \int_{\{v<u\}} \theta_v^n \leq \int_{\{v<u\}} e^{\beta v-\phi} d\mu \leq \int_{\{v<u\}}e^{\beta u-\phi} d\mu. 
\]
Thus, the inequalities above become equalities and we have in particular that 
\[
\int_{\{v<u\}} (e^{\beta u}-e^{\beta v})e^{-\phi}d\mu =0.
\]
Since $\phi$ is bounded from above on $X$ it follows that $\int_{\{v<u\}} (e^{\beta u}-e^{\beta v})d\mu=0$. We then deduce that $\mu(\{-\infty<v<u\})=0$. Since $\mu$ is non-pluripolar it follows that  $\mu(\{v<u\})=0$. Consequently  $\theta_v^n(\{v<u\})=0$, and the domination principle (Proposition \ref{prop: domination principle}) gives that $u\leq v$ on $X$. 
\end{proof}

\subsection{Regularity of quasi-psh envelopes}

By a deep result of Berman and Demailly \cite{BD12}, the envelope $V_{\theta}$ has locally bounded Laplacian in the ample locus ${\rm Amp}(\{\theta\})$ and its complex Monge-Amp\`ere measure satisfies 
\[
\theta_{V_{\theta}}^n = {\bf 1}_{\{V_{\theta}=0\}} \theta^n. 
\]
In the case when $\{\theta\}$ is integral, this result has been obtained by Berman \cite{Ber08} using different methods.
In this subsection we establish a weaker version of this result using viscosity methods combined with the convergence method of Berman \cite{Ber13}. We warmly thank Robert Berman for stimulating discussions and for sharing his ideas on the argument we provide below. 

\begin{theorem}Suppose $(X,\omega)$ is K\"ahler. \label{thm: weak BD}
        Let $\theta$ be a smooth $(1,1)$-form such that $\{\theta \}$ is big. Then the envelope $V_{\theta}$ satisfies 
\begin{equation}\label{eq: BD_weak_est}
        \theta_{V_{\theta}}^n \leq {\bf 1}_{\{V_{\theta}=0\}} \theta^n.
\end{equation}
\end{theorem}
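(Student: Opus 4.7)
The plan is to apply Berman's convergence method: realize $V_\theta$ as the limit of solutions to a family of exponential Monge--Amp\`ere equations, and pass a pointwise viscosity bound through the limit. Fix a K\"ahler form $\omega$ on $X$. For each $\beta > 0$, Theorem \ref{thm: maximizers are solutions} produces a unique $\varphi_\beta \in \mathcal{E}^1(X,\theta)$ with minimal singularities satisfying
\[
\theta_{\varphi_\beta}^n = e^{\beta \varphi_\beta}\,\omega^n.
\]
The goal is to show that $\varphi_\beta \to V_\theta$ uniformly on compact subsets of $\Amp(\{\theta\})$ as $\beta \to \infty$, and that the limit of the right-hand sides is dominated by ${\bf 1}_{\{V_\theta = 0\}}\theta^n$.

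For the convergence, I would sandwich $\varphi_\beta$ between $V_\theta \pm C/\beta$ on $\Amp(\{\theta\})$. The upper bound $\varphi_\beta \leq V_\theta + C/\beta$ follows by comparing $\varphi_\beta$ with a translate $V_\theta + c$: if $\varphi_\beta$ were larger than $V_\theta + c$ on a set of positive $\omega^n$-measure, then $\int e^{\beta \varphi_\beta}\omega^n$ would exceed $\vol(\{\theta\})$, contradicting the equation. The matching lower bound $\varphi_\beta \geq V_\theta - C/\beta$ follows from Lemma \ref{lem: domination principle} applied to the translate $V_\theta - c$ for an appropriate $c = c(\beta)$.

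Next I would carry out a viscosity comparison localized to $\Amp(\{\theta\})$, where $V_\theta$ is continuous by Demailly's regularization. At any $x_0 \in \Amp(\{\theta\})$ and any smooth upper test function $u$ for $\varphi_\beta$ with $u(x_0) = \varphi_\beta(x_0)$, the equation reads in the viscosity sense $(\theta + dd^c u)^n(x_0) \leq e^{\beta \varphi_\beta(x_0)} \omega^n(x_0)$, with the positive-part convention when $\theta + dd^c u$ fails to be positive. Taking $u \equiv \varphi_\beta(x_0)$ at an interior maximum of $\varphi_\beta$ yields the pointwise estimate $e^{\beta \varphi_\beta}\omega^n \leq \theta^n$ there, and an envelope/localization argument extends this to the open sublevel sets $\{\varphi_\beta > -\delta\}\cap\Amp(\{\theta\})$ for every $\delta > 0$.

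Finally, sending $\beta \to \infty$: the uniform convergence gives $\varphi_\beta < -\delta/2$ on $\{V_\theta < -\delta\}\cap \Amp(\{\theta\})$ for $\beta$ large, so $e^{\beta \varphi_\beta}\omega^n \to 0$ there, while on $\{V_\theta > -\delta\}\cap\Amp(\{\theta\})$ the viscosity bound $e^{\beta \varphi_\beta}\omega^n \leq \theta^n$ passes to the limit. Combined with the weak convergence $\theta_{\varphi_\beta}^n \to \theta_{V_\theta}^n$ on the ample locus (from the stability properties of the non-pluripolar product applied to a uniformly bounded family) and the fact that $\theta_{V_\theta}^n$ puts no mass on pluripolar sets, this yields $\theta_{V_\theta}^n \leq {\bf 1}_{\{V_\theta = 0\}}\theta^n$ globally on $X$. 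The most delicate step is the viscosity comparison itself: since $\theta$ is not globally semipositive in the big non-K\"ahler case, one must use the positive-part formulation of the complex MA operator (as in Eyssidieux--Guedj--Zeriahi) and restrict to the ample locus, where the classical viscosity arguments together with the continuity of $V_\theta$ coming from Demailly's regularization apply.
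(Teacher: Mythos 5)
Your overall strategy (Berman's zero-temperature limit plus a viscosity maximum principle) is the right one and is indeed what the paper does, but two steps as written do not hold up, and the first is the crux of the whole theorem. The claim that the maximum-principle evaluation at an interior maximum point of $\varphi_\beta$ "extends by an envelope/localization argument" to the pointwise bound $e^{\beta\varphi_\beta}\omega^n\leq\theta^n$ on the open sets $\{\varphi_\beta>-\delta\}\cap\Amp(\{\theta\})$ is unjustified and in general false: the viscosity subsolution property only gives a \emph{lower} bound on $(\theta+dd^cq)^n$ at points touched from above by a $C^2$ test function, and you can only convert it into an upper bound on $e^{\beta\varphi_\beta}$ at a point where you may take $q$ constant, i.e.\ at a maximum; on a generic point of $\{\varphi_\beta>-\delta\}$ there is no such control (and $\theta^n$ may even be negative there while the left-hand side is positive). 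This is precisely why the paper does \emph{not} put $\omega^n$ on the right-hand side: it solves $\theta_{\varphi_{\beta,\vep}}^n=e^{\beta\varphi_{\beta,\vep}}\bigl[(1+\vep)\theta^n_++\vep\omega^n\bigr]$ with $\theta^n_+=\max(\theta^n/\omega^n,0)\,\omega^n$, so that the only global estimate needed is $\varphi_{\beta,\vep}\leq 0$ — and that single estimate \emph{is} reachable by a maximum-principle argument, after perturbing by $\lambda\psi$ (a K\"ahler current potential with analytic singularities) to force the maximum into the ample locus and applying Lemma \ref{lem: viscosity vs pluripotential local}. Then $\theta^n_{\varphi_\beta}\leq\theta^n_+$ holds trivially everywhere, passes to the limit $\varphi_\beta\nearrow V_\theta$, the support statement follows from $\varphi_\beta\leq 0$, and the final identification $\mathbf{1}_{\{V_\theta=0\}}\theta^n_+=\mathbf{1}_{\{V_\theta=0\}}\theta^n$ is again the viscosity lemma applied to $V_\theta$ itself. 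With your choice of reference measure $\omega^n$, even the correct limit statement would only give $\theta^n_{V_\theta}$ as the weak limit of $e^{\beta\varphi_\beta}\omega^n$; the bound by $\theta^n$ on the contact set is exactly the missing pointwise density estimate, and it cannot be recovered from $\varphi_\beta\leq V_\theta+\varepsilon_\beta$ since $e^{\beta\varepsilon_\beta}$ is unbounded for the rate $\varepsilon_\beta\sim\log\beta/\beta$ one actually gets.

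The sandwich $V_\theta-C/\beta\leq\varphi_\beta\leq V_\theta+C/\beta$ is also not established by your arguments. For the lower bound, applying Lemma \ref{lem: domination principle} to the translate $V_\theta-c$ requires knowing $\theta^n_{V_\theta}\geq e^{\beta(V_\theta-c)}\omega^n$, which is false in general (the left side is carried by the contact set) and is in any case circular, since it presupposes information about $\theta^n_{V_\theta}$ of the kind the theorem is meant to provide; the paper instead builds a genuine subsolution $u_\beta=(1-\tfrac1\beta)V_\theta+\tfrac1\beta\phi-\tfrac{n\log\beta}{\beta}$ from a fixed auxiliary solution $\phi$ of $\theta_\phi^n=e^{\phi}\theta^n_+$. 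For the upper bound, the assertion that $\varphi_\beta>V_\theta+c$ on a set of positive measure "would make $\int e^{\beta\varphi_\beta}\omega^n$ exceed $\vol(\{\theta\})$" is not a proof: $e^{\beta(V_\theta+c)}\omega^n$ restricted to such a set need not have mass exceeding $\vol(\{\theta\})$, and in fact the true rate is only $\sup_X\varphi_\beta=O(\log\beta/\beta)$, not $O(1/\beta)$. So the architecture is salvageable, but you need the paper's (or Berman's) device of the positive-part reference measure $\theta^n_+$ and the perturbation into the ample locus to replace your open-set viscosity bound, and a barrier-type subsolution to replace the circular lower bound.
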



Before giving the details of the proof, we briefly explain the main points. We introduce $\theta^n_+ = \max\big(\frac{\theta^n}{\omega^n},0\big) \omega^n$, and for each $\beta>0$ Theorem \ref{thm: maximizers are solutions} provides a unique $\varphi_{\beta}\in \psh(X,\theta)$ with minimal singularities solving
\[
\theta_{\varphi_{\beta}}^n =e^{\beta\varphi_{\beta}} \theta_+^n.
\] 

As we show below, when $\beta\to +\infty$ the solutions $\varphi_{\beta}$ converge increasingly to the envelope $V_{\theta}$. As simple balayage argument provides that $\theta_{V_\theta}^n$ is supported on $\{V_\theta =0 \}$, to prove Theorem \ref{thm: weak BD} it suffices to show that $\varphi_{\beta}\leq 0$ for all $\beta>0$. 

We now describe a naive argument confirming this last fact that will be used as a guide in the proof below. Assume that the maximum of $\varphi_{\beta}$ on $X$ is attained at $x_0$. If we could apply the maximum principle to the function $\varphi_{\beta}$, then $dd^c \varphi_{\beta}$ would be negative semidefinite at $x_0$. Thus from the equation above we could deduce that $\varphi_{\beta}(x_0)\leq 0$. Of course we can not apply the maximum principle in this naive way because the solution $\varphi_{\beta}$ may not be $\Cc^2$. Instead we will use more rigid viscosity methods, originally developed by P.-L. Lions and others in the eighties (see e.g. \cite{IL90}) and adapted to complex Monge-Amp\`ere equations recently  (see \cite{EGZ11, EGZ16}, \cite{HL13}, \cite{Lu13}, \cite{Wang12},\cite{Zer13}, to only mention a few works in a very fast expanding literature). 
\begin{proof}[Proof of Theorem \ref{thm: weak BD}]As $\{\theta\}$ is big, we can assume that there exists $\psi \in {\rm PSH}(X,\theta)$ with analytic singularities  such that $\theta +dd^c \psi \geq \omega$. For each $\varepsilon>0, \beta>1$ it follows from Theorem \ref{thm: maximizers are solutions}  that there exists a unique $\varphi_{\beta,\varepsilon}\in {\rm PSH}(X,\theta)$  with minimal singularities such that 
 \begin{equation}
        \label{eq: Berman convergence 1}
        \theta_{\varphi_{\beta,\varepsilon}}^n = e^{\beta \varphi_{\beta,\varepsilon}} \left[(1+\varepsilon)\theta^n_+ + \varepsilon \omega^n \right].
 \end{equation}
In particular $\varphi_{\beta,\varepsilon}$ is locally bounded in the ample locus $\Omega:={\rm Amp}(\{\theta\})$. 

We claim that $\varphi_{\beta,\vep}\leq 0$ for all $\beta>1, \varepsilon>0$. Due to the lack of regularity for $\varphi_{\beta,\varepsilon}$ we will need some techniques from viscosity theory that we detail now. To prove the claim, we observe that it suffices to prove that 
\begin{equation}
        \label{eq: Berman convergence 2}
        v_{\lambda}:=(1-\lambda)\varphi_{\beta,\varepsilon} +\lambda \psi \leq 0,
\end{equation}
for all $\lambda\in (0,1)$ such that $(1-\lambda)^n(1+\varepsilon)>1$. Indeed, if the above estimate holds then we let $\lambda\to 0^+$ and obtain $\varphi_{\beta,\varepsilon}\leq 0$ in the ample locus $\Omega$. By basic properties of plurisubharmonic functions, this inequality extends to all of $X$ (the complement of $\Omega$ has Lebesgue measure zero). 

Now we argue \eqref{eq: Berman convergence 2}. Fixing $\beta,\varepsilon>0$, by possible adding a constant, we can assume that $\psi \leq \varphi_{\beta,\varepsilon}$. By contradiction, we assume that there exists $x_0\in X$ such that $c:=v_{\lambda}(x_0) = \sup_X v_\lambda >0$. Since $\psi|_{X \setminus \Omega}=-\infty$, it follows that $x_0\in \Omega$. In particular, $\psi$ is smooth in a small neighborhood  of $x_0$. 

Next, fix a ball $B(x_0,r)\subset \Omega$ in a holomorphic coordinate chart around $x_0$. By possibly shrinking $B(x_0,r)$, we can assume that the local potential $g$ of $\theta$ (i.e. $dd^c g=\theta$ in $B(x_0,r)$) satisfies $-c\leq g\leq 0$ in $B(x_0,r)$. In this ball the function $u:=v_{\lambda} + g$ is plurisubharmonic, bounded, and  we can write the following sequence of estimates:
\begin{eqnarray*}
(dd^c u)^n& = & (\theta +dd^c v_{\lambda})^n = \left[ (1-\lambda) \theta_{\varphi_{\beta,\varepsilon}} + \lambda \theta_{\psi}\right]^n  \\
&\geq & (1-\lambda)^n \theta_{\varphi_{\beta,\varepsilon}}^n = (1-\lambda)^n e^{\beta \varphi_{\beta,\varepsilon}} \left[(1+\varepsilon)\theta_{+}^n+\varepsilon \omega^n\right] \\
&\geq &e^{\beta u} \left[\theta^n_+ + \varepsilon(1-\lambda)^n \omega^n \right], 
\end{eqnarray*}
where in the last line we used that $\varphi_{\beta,\varepsilon} \geq v_{\lambda} =u-g\geq u$ in $B(x_0,r)$ and $(1-\lambda)^n(1+\varepsilon) >1$. 

On the other hand the function $u-g-c$, defined in $B(x_0,r)$, attains a maximum at $x_0$ (equal to zero). 
It follows from Lemma \ref{lem: viscosity vs pluripotential local} below that at $x_0$ we have 
\[
\theta^n = (dd^c g)^n \geq e^{\beta(g+c)} \left[\theta_+^n + \varepsilon(1-\lambda)^n \omega^n\right]. 
\]
But this is a contradiction since $g+c\geq 0$  and $\omega^n>0$ in $B(x_0,r)$. Thus \eqref{eq: Berman convergence 2} is proved together with the claim. 

By the comparison principle (Lemma \ref{lem: domination principle}) the solutions $\varphi_{\beta,\varepsilon} \leq 0$ are increasing as $\varepsilon \searrow 0$. Indeed, if $s<\varepsilon$ then 
\[\theta_{\varphi_{\beta,\varepsilon}}^n \geq e^{\beta \varphi_{\beta,\varepsilon}} \left[(1+s)\theta_+^n +s\omega^n\right],\]
hence we can use Lemma \ref{lem: domination principle} with $\mu:=(1+s)\theta_+^n +s\omega^n$ and $\phi=0$ to conclude that $\varphi_{\beta,\varepsilon}\leq \varphi_{\beta,s}$.  

Since $\varphi_{\beta,\varepsilon}\leq 0$ for all $\beta,\varepsilon>0$ it follows that $\varphi_{\beta,\varepsilon}$ increases almost everywhere to some $0\geq \varphi_{\beta}\in {\rm PSH}(X,\theta)$ with minimal singularities such that
\[
\theta _{\varphi_{\beta}}^n = e^{\beta \varphi_{\beta}} \theta_+^n. 
\]
Using Theorem \ref{thm: maximizers are solutions} again, there exists a unique $\phi \in {\rm PSH}(X,\theta)$ with minimal singularities such that 
\[
\theta_{\phi}^n =e^{\phi}\theta_+^n.
\]
By the comparison principle for $\beta > 1$ we have that 
\[
\varphi_{\beta} \geq u_\beta :=\left(1-\frac{1}{\beta}\right) V_{\theta} + \frac{1}{\beta} \phi -\frac{n\log \beta}{\beta}. 
\]
Indeed, $u_{\beta}\in {\rm PSH}(X,\theta)$ has minimal singularities and
\[
\theta_{u_{\beta}}^n \geq \frac{1}{\beta^n} \theta_{\phi}^n = \frac{1}{\beta^n} e^{\phi}\theta_{+}^n =e^{\phi - n \log \beta}\theta_{+}^n\geq e^{\beta u_{\beta}} \theta_+^n.  
\]
It thus follows from Lemma \ref{lem: domination principle} that $u_{\beta} \leq \varphi_{\beta}$ as claimed. 

Since $\varphi_\beta \leq 0$, it again follows from the comparison principle that $\varphi_{\beta}$ is increasing in $\beta$. Hence $\varphi_{\beta}\nearrow V_{\theta}$ and by continuity of the Monge-Amp\`ere operator we have 
\[
\theta _{V_{\theta}}^n = \lim_{\beta\to +\infty} \theta_{\varphi_{\beta}}^n \leq \theta_{+}^n. 
\]
Finally, by a standard balayage argument, $\theta_{V_{\theta}}^n$ is supported on the contact set $\{V_{\theta}=0\}$. Alternatively, this can be seen as follows. For each $\delta>0$, since $U:=\{V_{\theta}<-\delta\}$ is an open set, we have 
\[
\int_{U} \theta_{V_{\theta}}^n \leq \liminf_{\beta \to+\infty} \int_U \theta_{\varphi_{\beta}}^n \leq \liminf_{\beta\to +\infty} \int_U e^{-\beta \delta} \theta_+^n =0. 
\]
Putting everything together we obtain that $\theta_{V_\theta}^n \leq \mathbf{1}_{\{V_\theta =0\}}\theta^n_+$. As the very last step,  we see that $\mathbf{1}_{\{V_\theta =0\}}\theta^n_+ = \mathbf{1}_{\{V_\theta =0\}}\theta^n$. Indeed, this follows from an application of  next Lemma for any $x_0 \in \{V_\theta =0\}$, $u := V_\theta + g$, and $q:=g$, where $g$ is a local potential of $\theta$ near $x_0$.
\end{proof}

The following lemma is classical, but as a courtesy to the reader, we provide a proof extracted from \cite{HL09}: 

\begin{lemma}
\label{lem: viscosity subsolution}
Let $U$ be an open subset in $\mathbb{C}^n$. Assume that $u\in {\rm PSH}(U)$ and $q\in \mathcal{C}^2(U)$. If $u-q$ attains its maximum at $x_0\in U$ then $dd^c q(x_0) \geq 0$.
\end{lemma}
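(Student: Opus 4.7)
The statement to prove is the standard fact that a $\mathcal{C}^2$ test function dominating a plurisubharmonic function from above (up to a constant) at an interior maximum of the difference must have a positive semidefinite complex Hessian at that point. My plan is the classical contradiction/slice argument.

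First I would reduce the claim to a one-complex-variable statement. The Hermitian form associated with $dd^c q(x_0)$ is $H(\xi,\xi) = \sum_{j,k} \partial_j \bar\partial_k q(x_0)\, \xi_j \bar\xi_k$ for $\xi \in \mathbb{C}^n$, and $dd^c q(x_0) \geq 0$ is equivalent to $H(\xi,\xi) \geq 0$ for all $\xi$. So I fix an arbitrary $\xi$ and aim to show $H(\xi,\xi) \geq 0$. For this, restrict to the complex line $\zeta \mapsto x_0 + \zeta\xi$: the map $\varphi(\zeta) := u(x_0 + \zeta\xi)$ is subharmonic in a neighborhood of $0\in \mathbb{C}$ (taking $\xi$ small enough that the line stays in $U$), and $\psi(\zeta) := q(x_0+\zeta\xi)$ is $\mathcal{C}^2$ with $(\varphi - \psi)(0) = \max (\varphi - \psi)$ locally.

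Next I would apply the sub-mean value inequality to $\varphi$ on circles of small radius $r > 0$ centered at $0$, combined with the local maximum inequality $\varphi(\zeta) \leq \psi(\zeta) + (\varphi(0) - \psi(0))$. This yields
\[
\psi(0) \leq \frac{1}{2\pi}\int_0^{2\pi} \psi(re^{i\theta})\, d\theta
\]
for all sufficiently small $r$. Then I would insert the Taylor expansion of $\psi$ at $0$: the real-linear and holomorphic/antiholomorphic quadratic terms $(\zeta, \bar\zeta, \zeta^2, \bar\zeta^2)$ average out on the circle, and the mixed $|\zeta|^2$ term survives, giving
\[
\frac{1}{2\pi}\int_0^{2\pi} \psi(re^{i\theta})\, d\theta = \psi(0) + r^2 H(\xi,\xi) + o(r^2).
\]
Dividing by $r^2$ and letting $r \to 0^+$ produces $H(\xi,\xi) \geq 0$, which is what is needed.

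One minor technical point is the case $u(x_0) = -\infty$. Since $u - q$ attains its (local) maximum at $x_0$, this would force $u \equiv -\infty$ near $x_0$, and then $u$ is not really plurisubharmonic in the sense of the hypothesis (psh functions are by convention not identically $-\infty$ on any component), so one may exclude this case at the outset; otherwise, one must check that the restriction $\varphi$ is not $\equiv -\infty$, which follows from $\varphi(0) = u(x_0) > -\infty$. No other step is subtle: the mean-value inequality and the Taylor expansion are standard, so I do not expect any genuine obstacle — this is essentially the viscosity reformulation of the definition of plurisubharmonicity.
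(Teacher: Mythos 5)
Your argument is correct, but it follows a different route from the paper. The paper's proof (extracted from Harvey--Lawson) regularizes: it approximates $u$ by a decreasing sequence of smooth psh functions $u_j$ on a slightly smaller domain, adds the penalization $-\vep|x-x_0|^2$ to produce maximum points $x_j$ of $u_j-q-\vep|x-x_0|^2$, shows $x_j\to x_0$, applies the classical second-derivative test at $x_j$ to get $dd^cq\geq -\vep\, dd^c|x|^2$ there, and then passes to the limit in $j$ and $\vep$. You instead avoid approximation altogether: you slice along a complex line through $x_0$, use the sub-mean value inequality for the (subharmonic) restriction of $u$ together with the inequality $u\leq q+(u(x_0)-q(x_0))$ to get $\psi(0)\leq \frac{1}{2\pi}\int_0^{2\pi}\psi(re^{i\theta})\,d\theta$, and then read off $\sum_{j,k}\partial_j\ddkb q(x_0)\xi_j\bar\xi_k\geq 0$ from the second-order Taylor expansion of the $\Cc^2$ function $\psi$, the pure terms averaging out on the circle. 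Your treatment of the degenerate case $u(x_0)=-\infty$ is the right remark, since finiteness of $u(x_0)$ is exactly what lets you cancel $\varphi(0)-\psi(0)$ on both sides of the sub-mean value inequality. What each approach buys: yours is more elementary and self-contained (no regularization theorem for psh functions, no penalization, only the defining sub-mean value property and Peano's form of Taylor's theorem), and it isolates the positivity direction by direction; the paper's argument is the standard viscosity-theory template, which is more robust in situations where one wants to run the same scheme for more general operators or where slicing is less convenient. Both are complete proofs of the lemma.
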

 
\begin{proof}
The result follows from the maximum principle, if $u$ is also of class $\Cc^2$ near $x_0$. Approximate $u$ by a decreasing sequence of smooth psh functions $u_j$ in a smaller domain $U_1 \Subset U$. Let $B(x_0,r) \Subset \Omega_1$ for some $r>0$ and fix $\vep>0$. Let $x_j\in \bar{B}(x_0,r)$ be  a point where the function
$$
\varphi_j (x) := u_j(x) - q(x) -\vep|x-x_0|^2 , \ x\in \bar{B}(x_0,r)
$$ 
attains its maximum. We claim that $x_j \to x_0$. Assume, after extracting a subsequence if necessary, that $x_j\to \bar{x}$. Since $\varphi_j$ attains its maximum at $x_j$ it follows that
$$
u_j(x_0) -q(x_0) \leq u_j(x_j) -q(x_j) -\vep|x_j-x_0|^2.
$$
For each $k\in \mathbb{N}$ fixed, since the functions $u_k$ is smooth in $U_1$ it follows that $\limsup_{j} u_j(x_j) \leq \lim_{j}u_k(x_j) = u_k(\bar{x})$. Letting $k\to+\infty$ we obtain $\limsup_j u_j(x_j)\leq u(\bar{x})$. As $j\to +\infty$ one gets
$$
u(x_0) -q(x_0) \leq u(\bar{x}) -q(\bar{x}) -\vep|\bar{x}-x_0|^2 \leq u(x_0) -q(x_0) -\vep |\bar{x}-x_0|^2,
$$
hence $\bar{x}=x_0$ yielding the claim. For $j$ big enough, $x_j$ belongs to the interior of the ball. This allows us to apply the maximum principle which gives $dd^c q \geq -\vep  dd^c |x|^2$ at $x_j$. Letting $j\to +\infty$ and then $\vep \to 0$ we see that $dd^c q\geq 0$ at $x_0$.
\end{proof}

\begin{lemma}
        \label{lem: viscosity vs pluripotential local}
        Let $U$ be a bounded domain in $\CC^n$ and fix $\beta>0$. Assume that $u \in \textup{PSH}(U) \cap L^\infty$ such that
        \[
        (dd^c u)^n \geq e^{\beta u} fdV, 
        \]
        where $f>0$ is a continuous function in $U$ and $dV$ is a volume form. If $q$ is a $\Cc^2$ function in a neighborhood of $x_0$ in $U$ such that $u \leq q$ and $u(x_0)=q(x_0)$, then $(dd^c q)(x_0)$ is positive semidefinite and 
        \[
        (dd^c q)^n \geq e^{\beta q} fdV \ \textup{ at } \ x_0. 
        \] 
\end{lemma}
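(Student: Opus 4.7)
The first assertion, positivity of $dd^c q(x_0)$, is essentially immediate: since $u\le q$ on a neighborhood of $x_0$ with equality at $x_0$, the function $u-q$ attains its maximum at $x_0$, so Lemma \ref{lem: viscosity subsolution} applies and yields $dd^c q(x_0)\ge 0$.

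For the pointwise Monge--Amp\`ere inequality, my plan is to argue by contradiction using a small perturbation of $q$ together with the Bedford--Taylor comparison principle. Suppose that $(dd^c q)^n(x_0) < e^{\beta q(x_0)} f(x_0)\, dV(x_0)$. I will build a smooth strictly psh ``test'' function on a small ball that serves as a strict supersolution lying below $u$ at $x_0$ but above $u$ on the boundary, which is incompatible with $u$ being a pluripotential subsolution. Specifically, for small parameters $a,\delta>0$, set
\[
v(z) := q(z) + a\,|z-x_0|^2 - \delta
\]
on a small ball $B(x_0,r)\Subset U$. Since $dd^c q(x_0)\ge 0$, for every $a>0$ the form $dd^c v = dd^c q + a\, dd^c |z-x_0|^2$ is strictly positive at $x_0$, hence strictly positive throughout a ball $B(x_0,r_0)$ by continuity, so $v$ is smooth and strictly psh there.

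Next, I will choose the constants carefully. Since $(dd^c q)^n(x_0)<e^{\beta q(x_0)}f(x_0)\,dV(x_0)$ strictly, continuity of $q$, $f$, $dV$ together with continuity of $(dd^c q + a\, dd^c|z|^2)^n$ in $a$ allows me to fix $a>0$ and then shrink $r\le r_0$ so that
\[
(dd^c v)^n \le (1-\eta)\, e^{\beta q}\, f\, dV \qquad \text{on } B(x_0,r)
\]
for some $\eta>0$. After that, I pick $\delta>0$ small enough so that $e^{-\beta\delta}(1-\eta/2)^{-1}<1$, which upgrades the last inequality to the strict supersolution bound $(dd^c v)^n \le (1-\eta/2)\, e^{\beta v} f\, dV$ on $B(x_0,r)$, using $v\ge q-\delta$. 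Finally, I impose the geometric constraint $ar^2>\delta$ (shrinking $\delta$ further if necessary); this guarantees $v\ge q\ge u$ on $\partial B(x_0,r)$, while at the center $v(x_0)=u(x_0)-\delta<u(x_0)$.

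The final step is to apply the Bedford--Taylor comparison principle on $B(x_0,r)$ to the pair $(u,v)$, which satisfies $u\le v$ on the boundary. The set $\{v<u\}$ is nonempty (it contains $x_0$) and has positive Lebesgue measure by the mean value property of the psh function $u$ combined with continuity of $v$. On this set one obtains
\[
\int_{\{v<u\}} e^{\beta u} f\, dV \le \int_{\{v<u\}} (dd^c u)^n \le \int_{\{v<u\}} (dd^c v)^n \le (1-\eta/2)\int_{\{v<u\}} e^{\beta v} f\, dV,
\]
and since $e^{\beta v}<e^{\beta u}$ on $\{v<u\}$ and $f>0$, this chain is self-contradictory. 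I expect the main technical obstacle to be the simultaneous balancing of the three parameters $a$, $r$, $\delta$ so that $v$ is simultaneously strictly psh, a strict classical supersolution of the Monge--Amp\`ere equation on $B(x_0,r)$, bounded below by $u$ on the boundary, and strictly smaller than $u$ at $x_0$; everything else reduces to continuity and the classical comparison principle.
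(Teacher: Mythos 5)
Your argument is correct and is essentially the paper's own proof: the paper perturbs $q$ to $q_\vep(x)=q(x)+\vep\|x-x_0\|^2-\vep s^2$ (your $v$, with $a=\vep$ and $\delta=\vep s^2$), keeps it a supersolution near $x_0$ by continuity of $q$, $f$ and the strict inequality at $x_0$, and applies the Bedford--Taylor comparison principle on $\{q_\vep<u\}$ to reach the same contradiction using $f>0$ and $q_\vep(x_0)<u(x_0)$. The only fix needed is cosmetic: your smallness condition on $\delta$ should read $(1-\eta)e^{\beta\delta}\le 1-\eta/2$ (i.e.\ $\delta$ small enough), rather than $e^{-\beta\delta}(1-\eta/2)^{-1}<1$.
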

This lemma essentially says that a plurisubharmonic subsolution to the equation $(dd^c v)^n=e^{\beta v} f dV$ is also a viscosity subsolution. The result is well-known for experts in the viscosity theory for complex Monge-Amp\`ere equations (see \cite{EGZ11}), but for the convenience of the reader we give a detailed proof below.

\begin{proof} We can assume without loss of generality that $\beta=1$.  Let us suppose that our conclusion fails, i.e., there exists $x_0\in B:=B(x_0,r)\Subset U$ and $q\in \Cc^2(B)$ such that $u-q$ attains a maximum in $B$ at $x_0$ but $(dd^c q)^n<e^{q}fdV$ at $x_0$.  It follows from Lemma \ref{lem: viscosity subsolution} that $dd^c q$ is positive semidefinite at $x_0$. Denoting $q_{\vep}(x):=q(x) + \vep\|x-x_0\|^2-\vep s^2$, we claim that there exist positive constants $s<<r$ and $\vep>0$ such that 
        \[
        (dd^c q_{\vep})^n \leq e^{q_{\vep}} fdV \ \textup{ on } \ B(x_0,s).
        \]
Indeed, this follows from the fact that $q \in \Cc^2(B)$ and the fact that $(dd^c q)^n<e^{q}fdV$ near $x_0$.       

        Since $u\leq q=q_{\vep}$ on $\partial B(x_0,s)$ it follows from the comparison principle \cite[Theorem 4.1]{BT82} that 
        \[
        \int_{U_{\vep}} e^u fdV \leq \int_{U_{\vep}}(dd^c u)^n \leq \int_{U_{\vep}} (dd^c q_{\vep})^n \leq \int_{U_{\vep}}e^{q_{\vep}} fdV \leq \int_{U_{\vep}}e^{u} fdV,
        \] 
        where $U_{\vep}:= \{x\in B(x_0,s)\setdef q_{\vep}(x) <u(x)\}$. Consequently, all the inequalities above are in fact equalities. In particular, 
        \[
        \int_{U_{\vep}}(e^{u}-e^{q_{\vep}}) fdV=0,
        \]
        which yields $\int_{U_{\vep}} fdV=0$. Since $f>0$ it follows that $U_{\vep}$ has Lebesgue measure zero, hence $u\leq q_{\vep}$ on $B(x_0,s)$. But this is a contradiction, since $q_{\vep}(x_0)=q(x_0)-\vep s^2<u(x_0)$. 
\end{proof}

\begin{remark}\label{rem: BBGZ independent of BD12}
In the general case of big classes, the proof of the main result of the paper \cite{BBGZ13} uses the main regularity result of \cite{BD12} in \cite[Lemma 2.9, Lemma 3.2]{BBGZ13}. But what was actually used in these lemmas is the fact that the complex Monge--Amp\`ere measure of $V_{\theta}$ has bounded density with respect to Lebesgue measure and the latter follows from Theorem \ref{thm: weak BD}, that we proved above, without using \cite{BD12}.  
\textbf{}\end{remark}

\subsection{Comparison of capacities}
Given a big class $\{\theta\}$, recall that the $\theta$-capacity of a set $E \subset X$ is defined as follows (see \cite[Section 4.1]{BEGZ10} for further details):
\[
\capa_{\theta}(E) : = \sup \left\{\int_E \theta_u^n\setdef u\in \psh(X,\theta) \quad V_{\theta}-1\leq u\leq V_{\theta}\right\}.
\] The global extremal $\theta$-psh function of $E$ is defined as the usc regularization of 
\[
V_{\theta,E}:=\sup \left\{ u\in \psh(X,\theta) \setdef u\leq 0 \ {\rm on}\ E \right\}.
\]

By the definition of $V_{\theta,E}$ it follows that $V_{\theta,E}^*=V_{\theta,F}^*$ if $E=F\cup P$ for some pluripolar set $P$.  To see this it suffices to observe that any pluripolar set is contained in the $-\infty$ locus of  some $\varphi \in \psh(X,\theta)$. The latter standard fact can be quickly explained as follows. If $E$ is pluripolar then by \cite[Theorem 7.2]{GZ05} there exists $u\in \psh(X,\omega)$ such that $u(x)=-\infty$ for all $x\in E$. As $\{\theta\}$ is big, there exists $\psi \in \psh(X,\theta)$ such that $\theta+dd^c \psi \geq \vep \omega$ for some small constant $\vep>0$. It is clear that the function $\varphi:=\psi +\vep u$ belongs  to $\psh(X,\theta)$ and takes value $-\infty$ on $E$ as we claimed. 

Denote by $T_\theta(E) := \exp(-M_{\theta,E}):=\exp(- \sup_X V^\star_{\theta,E})$ the \emph{Alexander--Taylor capacity} of $E$. We recall the following useful relation between the $\theta$-capacity and the Alexander--Taylor capacity (see \cite[Proposition 5.4]{DN15} and \cite[Lemma 4.2]{BEGZ10}): 

\begin{prop}\label{comparison}

There exists $A>0$ such that for all Borel subsets $E\subset X$,
$$\exp\left[-\frac{A}{\capa_{\theta}(E)}        \right] \leq T_\theta(E)\leq e \cdot \exp \left[-\left(\frac{\vol(\{\theta\})} {\capa_{\theta}(E)}\right)^{\frac{1}{n}}  \right] .$$ 
\end{prop}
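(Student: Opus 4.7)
The plan is to derive both inequalities from a single structural observation about $\phi := V^\star_{\theta,E}$ with $M := M_{\theta,E} = \sup_X \phi$, namely that the $\theta$-psh function $\phi - M$ has supremum zero, so by the definition of $V_\theta$ it satisfies $\phi - M \leq V_\theta$, while conversely $V_\theta$ itself is a candidate in the supremum defining $V^\star_{\theta,E}$ (since $V_\theta \leq 0$ everywhere on $X$), yielding $V_\theta \leq \phi$. I would first reduce to the non-pluripolar case, which is the only nontrivial one: if $E$ is pluripolar then $\capa_\theta(E) = 0 = T_\theta(E)$ and both inequalities hold trivially; otherwise $\phi$ has minimal singularities, so $\int_X \theta_\phi^n = \vol(\{\theta\})$, and by inner regularity of $\capa_\theta$ I may further reduce to $E$ compact, in which case a standard balayage argument concentrates $\theta_\phi^n$ on $E$.

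For the upper bound on $T_\theta(E)$, equivalent to $(M+1)^n \capa_\theta(E) \geq \vol(\{\theta\})$, I would test the capacity against the $\theta$-psh function
\[
u := \tfrac{M}{M+1}\, V_\theta + \tfrac{1}{M+1}(\phi - M),
\]
whose curvature is the convex combination $\theta+dd^c u = \tfrac{M}{M+1}\theta_{V_\theta} + \tfrac{1}{M+1}\theta_\phi \geq 0$. Using the bracketing $V_\theta - M \leq \phi - M \leq V_\theta$ established above, a short check gives $V_\theta - \tfrac{M}{M+1} \leq u \leq V_\theta$, so $u$ is admissible. Expanding the $n$-th non-pluripolar wedge by multilinearity and keeping only the pure $\theta_\phi$ term gives $\theta_u^n \geq (M+1)^{-n}\theta_\phi^n$, and integrating over $E$ yields the claimed inequality $\capa_\theta(E) \geq \int_E \theta_u^n \geq (M+1)^{-n}\vol(\{\theta\})$.

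For the lower bound, equivalent to the Alexander--Taylor-type inequality $M \cdot \capa_\theta(E) \leq A$, I plan to invoke a uniform Chern--Levine--Nirenberg bound: there exists $A > 0$ such that for every $\psi \in \psh(X,\theta)$ with $\sup_X \psi = 0$ and every $v \in \psh(X,\theta)$ with $V_\theta - 1 \leq v \leq V_\theta$,
\[
\int_X (-\psi)\,\theta_v^n \leq A,
\]
which is obtained by iterated integration by parts exploiting the bounded oscillation of $v-V_\theta$ (cf.\ \cite[Lemma 4.2]{BEGZ10} and \cite[Proposition 5.4]{DN15}). Applying this with $\psi := \phi - M$, which satisfies $-\psi \geq M$ quasi-everywhere on $E$, we get $M \int_E \theta_v^n \leq \int_X (-\psi)\theta_v^n \leq A$ for every admissible $v$, and taking the supremum over such $v$ completes the argument.

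The main obstacle is the uniform CLN-type estimate above: while in the K\"ahler setting it is a routine integration-by-parts exercise, in the big class case extra care is needed to handle the non-pluripolar product properly and to accommodate the possible unboundedness of $V_\theta$ outside the ample locus; the upper bound argument, by contrast, is essentially a one-line convex combination once the bracketing $V_\theta - M \leq \phi - M \leq V_\theta$ is in hand.
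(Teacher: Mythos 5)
Your proposal reconstructs, rather than cites, the two inequalities, and the reconstruction of the first (lower) bound has a genuine gap at its crux. Recall that the paper's own proof is essentially a citation: the second inequality is \cite[Lemma 4.2]{BEGZ10}, the first is \cite[Proposition 5.4]{DN15}, and the whole point of the accompanying remark is that the only regularity input needed there is Theorem \ref{thm: weak BD}, i.e.\ $\theta_{V_\theta}^n\leq \mathbf{1}_{\{V_\theta=0\}}\theta^n$, rather than the deep result of \cite{BD12}. Your reduction of the lower bound to the uniform CLN-type estimate $\int_X(-\psi)\,\theta_v^n\leq A$ (over sup-normalized $\psi\in\psh(X,\theta)$ and $V_\theta-1\leq v\leq V_\theta$) is exactly the right reduction and is how \cite{DN15} proceeds, but you leave that estimate as ``the main obstacle,'' and the method you indicate cannot close it by itself: iterated integration by parts, using $-1\leq v-V_\theta\leq 0$ and the uniform bounds on mixed non-pluripolar masses, only reduces matters to bounding $\int_X(-\psi)\,\theta_{V_\theta}^n$ uniformly in $\psi$, and for a general non-pluripolar measure in place of $\theta_{V_\theta}^n$ such a uniform bound is false (the measure may concentrate where $\psi$ is arbitrarily negative). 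The missing ingredient is precisely Theorem \ref{thm: weak BD} (or \cite{BD12}), which gives $\theta_{V_\theta}^n\leq C\omega^n$ and hence $\int_X(-\psi)\,\theta_{V_\theta}^n\leq C\int_X(-\psi)\,\omega^n\leq A'$ by the uniform $L^1$ bound for normalized $\theta$-psh functions. Since you never invoke this input, your lower bound is incomplete; identifying it is exactly why the paper records this proposition with the remark it does.

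The upper bound via the convex combination $u=\tfrac{M}{M+1}V_\theta+\tfrac{1}{M+1}(\phi-M)$ is the standard argument behind \cite[Lemma 4.2]{BEGZ10}, and your bracketing $V_\theta\leq\phi\leq V_\theta+M$ and the admissibility check are correct; however, two steps need repair. The reduction to compact $E$ does not follow from inner regularity of $\capa_\theta$ alone: for compact $K\subset E$ one has $M_{\theta,K}\geq M_{\theta,E}$, so the compact case yields $e^{-M_{\theta,K}}\leq e\exp\bigl(-(\vol(\{\theta\})/\capa_\theta(K))^{1/n}\bigr)$, which bounds the smaller quantity $T_\theta(K)$, not $T_\theta(E)$; to conclude you also need $\inf_{K\subset E}M_{\theta,K}=M_{\theta,E}$, i.e.\ capacitability of $T_\theta$/convergence of the extremal functions along compacts exhausting $E$, which must be proved or cited separately. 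Finally, the balayage claim that $\theta_{V^*_{\theta,E}}^n$ carries full mass $\vol(\{\theta\})$ on a compact $E$ in the big setting should be argued on the ample locus and modulo the pluripolar subset of $E$ where $V^*_{\theta,E}>0$; this is routine but is where the non-K\"ahler nature of $\{\theta\}$ actually enters, and it should not be waved through.
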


The second inequality was proved in \cite{BEGZ10} while the first one was proved in \cite[Proposition 5.4]{DN15} using the main result of \cite{BD12}. But, what was actually used in the proof of  \cite[Proposition 5.4]{DN15} is the inequality in Theorem \ref{thm: weak BD}. Thus Proposition \ref{comparison} is independent of \cite{BD12}.

Using Proposition \ref{comparison} and a sharp analysis in the ample locus we get the following comparison between two $\theta$-capacities. Let us emphasize that this result significantly extends \cite[Theorem 5.6]{DN15} where an extra assumption  is required (see \cite[Definition 4.2]{DN15}).
\begin{theorem}\label{thm: comparison of capacity} Suppose $\{\theta_1\}$ and $\{\theta_2\}$ are big. Then there exists $C=C(\theta_1,\theta_2)>0$ such that
        \[
        C^{-1}\capa_{\theta_1}^{n}(E)  \leq \capa_{\theta_2}(E) \leq C\capa_{\theta_1}^{1/n}(E), 
        \] 
        for all Borel subsets $E\subset X$.
\end{theorem}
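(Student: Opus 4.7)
The plan is to reduce the two-sided capacity inequality to an \emph{affine} comparison of the global extremal quantities $M_{\theta_i,E} := \sup_X V^{\ast}_{\theta_i,E}$, and then to establish this affine comparison by a perturbation argument in the big cone combined with a Hartogs-type $L^{1}$-bound. Indeed, Proposition \ref{comparison} translates into the bounds $\capa_{\theta_i}(E) \leq A_i/M_{\theta_i,E}$ and $\capa_{\theta_i}(E) \geq \vol(\{\theta_i\})/(1+M_{\theta_i,E})^n$, so both halves of the desired estimate will follow once one shows that $M_{\theta_1,E}$ and $M_{\theta_2,E}$ are affinely comparable, i.e.\ $c\, M_{\theta_1,E} - C \leq M_{\theta_2,E} \leq c^{-1} M_{\theta_1,E}+C$ with constants independent of $E$. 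The easy regime where $M_{\theta_i,E}$ is bounded (equivalently, $\capa_{\theta_i}(E)$ is bounded away from zero) will be handled separately via the trivial estimate $\capa_{\theta_i}(E) \leq \vol(\{\theta_i\})$.

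To produce the affine comparison of the $M_{\theta_i,E}$ I will exploit the openness of the big cone. Since $\{\theta_2\}$ is big and $\theta_1$ is a bounded smooth form, for every sufficiently small $\delta>0$ the class $\{\theta_2-\delta \theta_1\}$ remains big: if $\theta_2+dd^c \psi_2 \geq \vep \omega$ and $-A\omega \leq \theta_1 \leq A\omega$, then $(\theta_2-\delta\theta_1)+dd^c\psi_2 \geq (\vep-\delta A)\omega$ is a K\"ahler current for $\delta<\vep/A$. Fix such a $\delta$ and pick $\rho \in \psh(X,\theta_2-\delta\theta_1)$ normalized so that $\sup_X \rho=0$. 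For every $u\in \psh(X,\theta_1)$ with $u \leq 0$ on $E$, the function $\delta u+\rho$ is $\theta_2$-psh and $\leq 0$ on $E$ (since $\delta u \leq 0$ on $E$ and $\rho \leq 0$ on $X$), yielding $\sup_X(\delta u+\rho) \leq M_{\theta_2,E}$.

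The crucial task is then to lower bound $\sup_X(\delta u+\rho)$ in terms of $\sup_X u$, uniformly in $u$. For this I will invoke the Hartogs-type $L^{1}$-compactness of $\{v \in \psh(X,\eta) : \sup_X v=0\}$ for each fixed big class $\{\eta\}$: there is a constant $K_\eta$ with $\int_X v\,\omega^n \geq \sup_X v-K_\eta$ for every $v \in \psh(X,\eta)$. Integrating $\delta u+\rho$ against the probability measure $\omega^n/\int_X \omega^n$ and applying this bound to both $u$ and $\rho$ yields $\sup_X(\delta u+\rho) \geq \delta \sup_X u - \delta K_{\theta_1}-K_{\theta_2-\delta\theta_1}$. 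Specializing to $u= V^{\ast}_{\theta_1,E}$ gives $M_{\theta_2,E} \geq \delta M_{\theta_1,E}-C$, and swapping the roles of $\theta_1$ and $\theta_2$ yields the reverse affine inequality; feeding both into Proposition \ref{comparison} closes the theorem. I expect the main obstacle to be precisely this lower bound on $\sup_X(\delta u+\rho)$: a priori $u$'s near-supremum could be attained on a small set where $\rho$ is very negative, and what rules this out uniformly in $u$ is exactly the $L^{1}$-compactness above, which is the ``sharp analysis in the ample locus'' alluded to in the text.
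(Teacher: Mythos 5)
Your proposal is correct and follows essentially the same route as the paper: you build the $\theta_2$-psh competitor $\delta u+\rho$ (the paper's $\psi+\vep V^*_{\theta_1,E}$, with $\rho=\psi\in\psh(X,\theta_2-\vep\theta_1)$ coming from bigness), deduce the affine comparison $M_{\theta_2,E}\geq \vep M_{\theta_1,E}-C$ of the Alexander--Taylor extremal values, and conclude by the double estimate of Proposition \ref{comparison} plus the trivial bound $\capa_{\theta}(E)\leq \vol(\{\theta\})$. The only variation is in one technical step: where you bound $\sup_X(\delta u+\rho)$ from below by averaging against $\omega^n/\int_X\omega^n$ and the uniform $L^1$ bound for sup-normalized quasi-psh functions, the paper evaluates on the nonempty open set $U=\{\psi>-1\}$ (using that $\psi$ has analytic singularities) and the finiteness of $M_{\theta_1,U}$; both devices are valid.
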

\begin{proof}Since $\{\theta_2\}$ is big we can find $\psi\in \psh(X,\theta_2)$ that is smooth in $\textup{Amp}(\{\theta_2\})$, $\psi$ has analytic singularities such that $\theta_2+dd^c \psi \geq \vep \omega\geq \vep \theta_1$ for some $\vep>0$ (\cite{Bou04}). Normalize $\psi$ by $\sup_X \psi=0$ and denote by $U=\{\psi>-1\}$ which is a nonempty open subset of $X$, hence $M_{\theta_1,U}=\sup_X V_{\theta_1,U} <+\infty$. Now, the function $u=\psi + \vep V_{\theta_1,E}^*$ is $\theta_2$-psh and satisfies $u\leq 0$ on $E$ (modulo a pluripolar set). Thus by definition we have $u\leq V_{\theta_2,E}^*$. It follows that 
        \[
        M_{\theta_2,E} \geq \sup_U u \geq \vep \sup_U V_{\theta_1,E}^* -1.
        \]
        On the other hand $V_{\theta_1,E}^* -\sup_U V_{\theta_1,E}^*$ is $\theta_1$-psh and takes nonpositive values on $U$, hence $V_{\theta_1,E}^* -\sup_U V_{\theta_1,E}^* \leq V_{\theta_1,U}$. This together with the above inequality yields
        \[
        M_{\theta_2,E} \geq \vep (M_{\theta_1,E} -M_{\theta_1,U}) -1,
        \]  
Giving that, for some $C>0$ fixed we have $T_{\theta_2}(E) \leq C T_{\theta_1}(E)^\varepsilon$. An elementary calculation using the double estimate of Proposition \ref{comparison} finishes the proof.     
\end{proof}

From this comparison of capacities and standard arguments in pluripotential theory we  immediately get the following convergence result.
\begin{coro}\label{cor: convergence big}
Assume that $\{ \theta\}$ is big and $\{\varphi^i_j\}_j, i=1,...,n$ are sequences  of $\theta$-psh functions with minimal singularities that converge decreasingly (or uniformly) to $\varphi^i, i=1,...,n$ (also with minimal singularities). If $f_j$ is a sequence of uniformly bounded quasi-continuous functions converging monotonically to $f$ (also quasi-continuous) then 
\[
\int_X f_j \theta_{\varphi^1_j} \wedge ... \wedge \theta_{\varphi^n_j} \to \int_X f \theta_{\varphi^1} \wedge ... \wedge \theta_{\varphi^n}. 
\]
\end{coro}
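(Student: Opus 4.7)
My plan is to adapt the classical Guedj--Zeriahi argument: split the integration domain into the complement of a small-capacity open set, where $f_j\to f$ uniformly, and the small-capacity set itself, where the non-pluripolar Monge--Amp\`ere mass is uniformly small. The comparison Theorem \ref{thm: comparison of capacity} is what allows these two aspects (defined a priori via different capacities) to be reconciled.

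First, fix $\eta>0$. By quasi-continuity of $f$ and each $f_j$, combined with a countable $2^{-j}\eta$ exhaustion, one produces an open set $U=U_\eta\subset X$ with $\capa_\theta(U)<\eta$ on whose complement $f$ and every $f_j$ are continuous. The monotonicity of $f_j\to f$ then upgrades this, via Dini's theorem applied to the compact set $X\setminus U$, to uniform convergence $f_j\to f$ on $X\setminus U$. At this step it is essential that quasi-continuity with respect to $\capa_\omega$ coincides with quasi-continuity with respect to $\capa_\theta$; this equivalence is precisely the content of Theorem \ref{thm: comparison of capacity}.

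Second, after normalizing $\sup_X\varphi^i_j=0$ and using the uniform lower bound $\varphi^i_j\geq V_\theta-C$ coming from minimal singularities, the standard capacity--mass estimate of pluripotential theory (which in the big setting follows from the second inequality of Proposition \ref{comparison} together with the very definition of $\capa_\theta$) yields constants $A,\alpha>0$ such that
\[
\int_E \theta_{\varphi^1_j}\wedge\cdots\wedge\theta_{\varphi^n_j}\leq A\,\capa_\theta(E)^{\alpha}
\]
uniformly in $j$, for every Borel $E\subset X$; the same bound holds for the limit product $\mu:=\theta_{\varphi^1}\wedge\cdots\wedge\theta_{\varphi^n}$. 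Moreover, by multilinearity of the non-pluripolar product and continuity of non-pluripolar Monge--Amp\`ere products under decreasing (or uniform) convergence of minimally singular potentials \cite[Theorem 2.17]{BEGZ10}, the measures $\mu_j:=\theta_{\varphi^1_j}\wedge\cdots\wedge\theta_{\varphi^n_j}$ converge to $\mu$ in the weak sense of measures.

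To conclude, extend $f|_{X\setminus U}$ via Tietze to a continuous function $g$ on $X$ with $\|g\|_\infty\leq M:=\sup_j\|f_j\|_\infty$, and decompose
\begin{multline*}
\int_X f_j\,d\mu_j-\int_X f\,d\mu = \int_{X\setminus U}(f_j-f)\,d\mu_j+\int_X g\,(d\mu_j-d\mu) \\ +\int_U(f_j-g)\,d\mu_j+\int_U(g-f)\,d\mu.
\end{multline*}
The first summand tends to $0$ as $j\to\infty$ by uniform convergence on $X\setminus U$ and the uniform mass bound $\mu_j(X)\leq \vol(\{\theta\})$; the second by weak convergence of $\mu_j$ tested against the continuous function $g$; the last two are each bounded by $2MA\eta^{\alpha}$ via the capacity--mass estimate. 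Letting first $j\to\infty$ and then $\eta\to 0$ closes the argument. The main technical obstacle is the uniform capacity--mass estimate for minimally singular potentials in a big class; Theorem \ref{thm: comparison of capacity} is what bridges the capacity naturally appearing in quasi-continuity with the one controlling Monge--Amp\`ere mass in $\{\theta\}$.
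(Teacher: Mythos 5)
Your proof is correct and follows essentially the same route as the paper: a small-capacity exceptional set coming from quasi-continuity, the comparison of capacities (Theorem \ref{thm: comparison of capacity}) to control the non-pluripolar masses $\mu_j(U)$ and $\mu(U)$ uniformly, a Tietze extension of $f|_{X\setminus U}$, and the weak convergence $\mu_j\to\mu$ from \cite[Theorem 2.17]{BEGZ10}. The only (harmless) variation is the final step: you invoke Dini's theorem on the compact set $X\setminus U$ to turn monotone into uniform convergence, whereas the paper exploits the monotonicity of $f_j$ directly through a double-limit sandwich combined with dominated convergence; also note that the uniform bound $\int_E\mu_j\leq A\,\capa_\theta(E)$ really comes from the definition of $\capa_\theta$ together with the uniform sandwich $V_\theta-C\leq\varphi^i_j\leq V_\theta$ (after adding constants), rather than from Proposition \ref{comparison}.
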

\begin{proof}Let $\omega$ be a K\"ahler form on $X$. By the definition of quasi-continuity, for any $\varepsilon >0$ one can find an open set $U \subset X$ such that all $f_j$ are continuous in $X \setminus U$ and $\textup{Cap}_\omega(U) \leq \varepsilon$. Using Theorem \ref{thm: comparison of capacity}, a standard argument now gives that 
\begin{equation}\label{eq: capintest}
\int_U \theta_{\varphi^1_j} \wedge ... \wedge \theta_{\varphi^n_j} \leq C \varepsilon^{1/n}, \ \ \ \int_U \theta_{\varphi^1} \wedge ... \wedge \theta_{\varphi^n} \leq C \varepsilon^{1/n}.
\end{equation}
Now we can use Tietze's  theorem to extend each $f_j|_{X \setminus U},f|_{X \setminus U}$ to a continuous functions $\tilde f_j,\tilde f$ on $X$ whose $L^\infty$ norm is controlled. It follows from \cite[Theorem 2.17]{BEGZ10} that for $j_0$ fixed we have 
$$\int_X \tilde f_{j_0} \theta_{\varphi^1_j} \wedge ... \wedge \theta_{\varphi^n_j} \to \int_X \tilde f_{j_0} \theta_{\varphi^1} \wedge ... \wedge \theta_{\varphi^n}, \ \ \int_X \tilde f \theta_{\varphi^1_j} \wedge ... \wedge \theta_{\varphi^n_j} \to \int_X \tilde f \theta_{\varphi^1} \wedge ... \wedge \theta_{\varphi^n}.$$ 
Using \eqref{eq: capintest} and the uniform boundedness of $f_j,\tilde f_j,f,\tilde f$, we can subsequently write that 
$$\int_X f_{j_0} \theta_{\varphi^1_j} \wedge ... \wedge \theta_{\varphi^n_j} \to \int_X f_{j_0} \theta_{\varphi^1} \wedge ... \wedge \theta_{\varphi^n}, \ \ \int_X f \theta_{\varphi^1_j} \wedge ... \wedge \theta_{\varphi^n_j} \to \int_X f \theta_{\varphi^1} \wedge ... \wedge \theta_{\varphi^n}.$$ 
Finally, using the above and the monotonicity of $f_j$ we can write that
$$\lim_{j_0 \to \infty}\int_X f_{j_0} \theta_{\varphi^1} \wedge ... \wedge \theta_{\varphi^n} \geq \lim_{j \to \infty } \int_X f_j \theta_{\varphi^1_j} \wedge ... \wedge \theta_{\varphi^n_j}\geq \int_X f \theta_{\varphi^1} \wedge ... \wedge \theta_{\varphi^n}.$$
After invoking the dominated convergence theorem, the proof is finished.
\end{proof}

\subsection{The operator $P(\varphi,\psi)$}
Consider $X$ a compact K\"ahler manifold and $\{\theta\}$ a big cohomology class. Given an usc function $f$ on $X$, it is natural to ask whether there exists $u\in \psh(X,\theta)$ lying below $f$ . We will pay particular attention to the case when $f=\min(\varphi,\psi)$, where $\varphi,\psi \in \psh(X,\theta)$, in which case more can be said. Indeed, when $\{\theta\}$ is K\"ahler and $\varphi,\psi\in \Ec(X,\theta)$, it was shown in \cite{Dar14a} that $P_\theta(\varphi,\psi) \in \mathcal E(X,\theta)$. The analogue of this result holds in the big case as well: 
\begin{theorem} \label{thm: sub-extension} Let $\chi \in \mathcal W^-$, i.e., $\chi$ is convex increasing with $\chi(0)=0$ and $\chi(-\infty)=-\infty$. If $\varphi,\psi\in \Ec_\chi(X,\theta)$, then $P_{\theta}(\varphi,\psi):=P_{\theta}(\min(\varphi,\psi))\in \Ec_\chi(X,\theta)$. In particular, if $\varphi,\psi\in \Ec(X,\theta)$, then $P_{\theta}(\varphi,\psi)\in \Ec(X,\theta)$.
\end{theorem}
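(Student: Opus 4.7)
Let $u := P_\theta(\varphi,\psi)$. My strategy is to derive a pointwise orthogonality relation for the non-pluripolar Monge--Amp\`ere measure of $u$ and then read off the $\chi$-energy bound directly. Specifically, I aim to establish the measure inequality
\begin{equation}\label{plan:ortho}
\theta_u^n \leq \mathbf{1}_{\{u = \varphi\}} \theta_\varphi^n + \mathbf{1}_{\{u = \psi\}\cap \{u<\varphi\}} \theta_\psi^n.
\end{equation}
Once \eqref{plan:ortho} is in hand, the theorem will follow quickly: on $\{u = \varphi\}$ we have $u - V_\theta = \varphi - V_\theta$, and similarly on $\{u = \psi\} \cap \{u < \varphi\}$, so using $-\chi \geq 0$,
\[
E_\chi(u) = \int_X (-\chi)(u - V_\theta) \theta_u^n \leq E_\chi(\varphi) + E_\chi(\psi) < \infty.
\]
Integrating \eqref{plan:ortho} over $X$, combined with the general upper bound $\int_X \theta_u^n \leq \vol(\{\theta\})$ and the matching contact-set identities established along the way, yields $\int_X \theta_u^n = \vol(\{\theta\})$, hence $u \in \Ec_\chi(X,\theta)$.

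The key geometric fact behind \eqref{plan:ortho} is that for the auxiliary $\theta$-psh function $w_\varepsilon := \max(u,\varphi - \varepsilon)$ one has $\varphi - \varepsilon \leq w_\varepsilon \leq \varphi$, so by the monotonicity property \cite[Theorem 1.16]{BEGZ10} the function $w_\varepsilon$ inherits full Monge--Amp\`ere mass from $\varphi$. Applying plurifine locality of the non-pluripolar product \cite[Proposition 1.4]{BEGZ10} on the plurifine open sets $\{u > \varphi - \varepsilon\}$ (where $w_\varepsilon = u$) and $\{u < \varphi - \varepsilon\}$ (where $w_\varepsilon$ is a constant shift of $\varphi$), I expect to obtain
\[
\vol(\{\theta\}) = \int_X \theta_{w_\varepsilon}^n = \int_{\{u > \varphi - \varepsilon\}} \theta_u^n + \int_{\{u < \varphi - \varepsilon\}} \theta_\varphi^n,
\]
modulo the level set $\{u = \varphi - \varepsilon\}$, which carries no non-pluripolar mass for a.e.~$\varepsilon$. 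Subtracting from $\int_X \theta_\varphi^n = \vol(\{\theta\})$ gives $\int_{\{u > \varphi - \varepsilon\}} \theta_u^n = \int_{\{u \geq \varphi - \varepsilon\}} \theta_\varphi^n$; letting $\varepsilon \to 0^+$ then produces the contact-set identity $\int_{\{u = \varphi\}} \theta_u^n = \int_{\{u = \varphi\}} \theta_\varphi^n$. A symmetric argument with the roles of $\varphi$ and $\psi$ swapped, combined with the standard balayage fact that $\theta_u^n$ is concentrated on $\{u = \min(\varphi,\psi)\}$, should then yield \eqref{plan:ortho}.

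The main obstacle will be strengthening the integral identities above into the pointwise measure identity required in \eqref{plan:ortho}, since the contact set $\{u = \varphi\}$ is in general neither open nor plurifine open, so plurifine locality does not apply to it directly. I plan to first establish the measure bound on plurifine open supersets $\{u > \varphi - \varepsilon\}$ by combining plurifine locality with a viscosity-type comparison in the spirit of Lemma \ref{lem: viscosity vs pluripotential local}, and then let $\varepsilon \to 0^+$. A secondary technical issue to address is that $u$ need not be automatically proper (not identically $-\infty$) when $\varphi, \psi$ fail to have minimal singularities, and that energy bounds must pass through the resulting approximation. Both will be handled via the minimal-singularity approximants $\varphi_k := \max(\varphi, V_\theta - k)$ and $\psi_k := \max(\psi, V_\theta - k)$, for which the argument above applies directly, followed by monotone convergence of $E_\chi$ along the decreasing sequences $\varphi_k \searrow \varphi$, $\psi_k \searrow \psi$ and hence $u_k := P_\theta(\min(\varphi_k,\psi_k)) \searrow u$.
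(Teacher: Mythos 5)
Your route is genuinely different from the paper's, and it is viable. The paper never analyzes the contact-set structure of $\theta_{P_\theta(\varphi,\psi)}^n$: it solves the auxiliary equations $\theta_{u_j}^n=e^{u_j-\varphi_j}\theta_{\varphi_j}^n+e^{u_j-\psi_j}\theta_{\psi_j}^n$ for the canonical cutoffs (Lemma \ref{lem: solution unbounded measure}), forces $u_j\le P_\theta(\varphi_j,\psi_j)$ by the domination principle, bounds $E_\chi(u_j)$ uniformly, and concludes by the compactness statement \cite[Proposition 2.19]{BEGZ10} plus monotonicity of $\Ec_\chi$. You instead aim at the ``orthogonality'' bound $\theta_u^n\le \mathbf{1}_{\{u=\varphi\}}\theta_\varphi^n+\mathbf{1}_{\{u=\psi\}}\theta_\psi^n$ for $u=P_\theta(\varphi,\psi)$ and read the energy estimate off it. This avoids solving any Monge--Amp\`ere equation, yields a stronger and independently useful statement, and only needs Proposition \ref{prop: BD vanish general} (whose proof does not use Theorem \ref{thm: sub-extension}, so there is no circularity) together with a convergence statement such as Corollary \ref{cor: convergence big}. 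So the comparison is: the paper trades contact-set analysis for the variational solution of an auxiliary equation; you trade the auxiliary equation for finer information on $\theta_u^n$.

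Two steps of your plan need repair. First, full mass of $u$ cannot be extracted by ``integrating'' your measure inequality together with contact-set identities: the inequality is an upper bound on $\theta_u^n$, so it can never produce the lower bound $\int_X\theta_u^n\ge\vol(\{\theta\})$, and even the identities $\int_{\{u=\varphi\}}\theta_u^n=\int_{\{u=\varphi\}}\theta_\varphi^n$ and $\int_{\{u=\psi\}}\theta_u^n=\int_{\{u=\psi\}}\theta_\psi^n$ only give $\int_X\theta_u^n\ge\vol(\{\theta\})-\int_{\{u<\varphi\}}\theta_\varphi^n$, which falls short unless you control $\theta_\varphi^n$ off the contact set. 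Full mass (and properness of $u$) must come from your approximation step, which in fact suffices on its own: $u_k:=P_\theta(\min(\varphi_k,\psi_k))\ge V_\theta-k$ has minimal singularities, hence full mass automatically; after normalizing $\varphi,\psi\le V_\theta$, the orthogonality bound for $u_k$ gives $E_\chi(u_k)\le E_\chi(\varphi_k)+E_\chi(\psi_k)$, which is uniformly bounded by the standard characterization of $\Ec_\chi$ through canonical approximants; then the decreasing limit is handled exactly as at the end of the paper's proof via \cite[Proposition 2.19]{BEGZ10} and \cite[Proposition 2.14]{BEGZ10}. Second, the passage from the plurifine-open sets $\{u>\varphi-\varepsilon\}$ to the contact set needs neither a viscosity comparison nor the a.e.-$\varepsilon$ claim about the level sets $\{u=\varphi-\varepsilon\}$ (which is delicate anyway, since the measure $\theta_{w_\varepsilon}^n$ varies with $\varepsilon$). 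The clean argument, for the cutoff potentials, is: plurifine locality gives $\mathbf{1}_{\{u=\varphi\}}\theta_u^n\le\mathbf{1}_{\{u>\varphi-\varepsilon\}}\theta_u^n=\mathbf{1}_{\{u>\varphi-\varepsilon\}}\theta_{w_\varepsilon}^n\le\theta_{w_\varepsilon}^n$; since $|w_\varepsilon-\varphi|\le\varepsilon$, the measures $\theta_{w_\varepsilon}^n$ converge weakly to $\theta_\varphi^n$ by \cite[Theorem 2.17]{BEGZ10} (or Corollary \ref{cor: convergence big}), and an inequality of a fixed Borel measure against weakly convergent measures survives the limit; restricting the resulting inequality $\mathbf{1}_{\{u=\varphi\}}\theta_u^n\le\theta_\varphi^n$ to the Borel set $\{u=\varphi\}$ gives what you need. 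Combined with Proposition \ref{prop: BD vanish general} (applicable since $\min(\varphi_k,\psi_k)$ is usc and quasi-continuous), only the inequality version of your display is ever required; the equalities and the mass bookkeeping you chase are superfluous.
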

\begin{proof}
Without loss of generality we can assume that $\varphi,\psi\leq 0$. 
Let $\varphi_j:=\max(\varphi,V_{\theta}-j)$, $\psi_j:=\max(\psi,V_{\theta}-j)$ be the canonical approximants. 
 For each $j>0$, it follows from Lemma \ref{lem: solution unbounded measure} below that there exists a unique $u_j\in \psh(X,\theta)$ with minimal singularities such that
        \begin{equation}
                \label{eq: sub-extension 1}
                \theta_{u_j}^n = e^{u_j-\varphi_j} \theta_{\varphi_j}^n + e^{u_j-\psi_j} \theta_{\psi_j}^n.
        \end{equation}
Additionally, it follows from Lemma \ref{lem: domination principle} (see also the proof of the uniqueness part in Lemma \ref{lem: solution unbounded measure}) that $u_j\leq \min(\varphi_j,\psi_j)$. Consequently $u_j\leq P_\theta(\varphi_j,\psi_j)$. 

Next we claim that  
        \[
                \inf_{j}\int_X \chi(u_j-V_{\theta}) \theta_{u_j}^n >-\infty.  
        \]      
To prove the claim, in view of \eqref{eq: sub-extension 1} it suffices to prove that 
                \begin{equation}
                \label{eq: sub-extension 2}
                \inf_{j}\int_X \chi(u_j-V_{\theta}) e^{u_j-\varphi_j}\theta_{\varphi_j}^n >-\infty.  
        \end{equation}  
By convexity of $\chi$ it follows that $\chi(t+s)\geq \chi(t)+\chi(s)$, and also $\chi(t)e^t \geq -C,$  for all $t,s\leq 0$ for some $C>0$. Thus to prove \eqref{eq: sub-extension 2} it suffices to check that 
        \[
                \inf_{j}\int_X \chi(\varphi_j-V_{\theta}) e^{u_j-\varphi_j}\theta_{\varphi_j}^n >-\infty.  
        \]      
        But this holds since $u_j\leq \varphi_j$ and $\varphi\in \Ec_{\chi}(X,\theta)$.  Thus the claim is proved. 
        
        Since $\chi(-\infty)=-\infty$, the claim implies that $\sup_X u_j$ is uniformly bounded. 
It thus follows from \cite[Proposition 2.19]{BEGZ10}  that some subsequence of $u_j$ converges in $L^1(X,\omega^n)$ to some $u\in \Ec_{\chi}(X,\theta)$. Since $u_j\leq P_\theta(\varphi_j,\psi_j)$ it follows that $u\leq P_{\theta}(\min(\varphi,\psi))$, completing the proof.
\end{proof}

\begin{lemma}
        \label{lem: solution unbounded measure}
        Assume that $u,v\in \psh(X,\theta)$ with minimal singularities. Then there exists a unique $\varphi\in \psh(X,\theta)$ with minimal singularities such that 
        \[
        \theta_{\varphi}^n = e^{\varphi-u}\theta_{u}^n + e^{\varphi-v}\theta_v^n. 
        \] 
\end{lemma}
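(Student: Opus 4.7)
The plan is to recast the equation as $\theta_\varphi^n=e^{\varphi}\mu$ with the non-pluripolar positive Borel measure
$\mu:=e^{-u}\theta_u^n+e^{-v}\theta_v^n$,
where non-pluripolarity is automatic since $\theta_u^n$ and $\theta_v^n$ are non-pluripolar products and $e^{-u},e^{-v}$ are Borel, and then to invoke Theorem \ref{thm: maximizers are solutions}. Uniqueness comes for free: applying Lemma \ref{lem: domination principle} with $\beta=1$, $\phi=0$, and the common measure $\mu$ forces $\varphi_1\le\varphi_2$ whenever both lie in $\mathcal{E}(X,\theta)$ and solve the equation, so by symmetry $\varphi_1=\varphi_2$. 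Once a solution $\varphi$ is produced, Lemma \ref{lem: domination principle} applied with reference measure $\theta_u^n$ and $\phi:=u$ (using the bound $\theta_\varphi^n\ge e^{\varphi-u}\theta_u^n$ from the equation and the tautology $\theta_u^n=e^{u-u}\theta_u^n$) will give the a priori upper bound $\varphi\le u$, and similarly $\varphi\le v$; since $u,v$ have minimal singularities this yields $\varphi\le V_\theta+C$, which is half of the minimal singularities condition.

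The main difficulty is that $\mu$ need not have finite total mass, because $e^{-u}$ and $e^{-v}$ can blow up near the non-K\"ahler locus where $V_\theta\to-\infty$. I would handle this via truncation: for each integer $N\ge 1$ set
$\mu_N:=\min(e^{-u},N)\theta_u^n+\min(e^{-v},N)\theta_v^n$,
a non-pluripolar Borel measure of finite total mass at most $2N\,\vol(\{\theta\})$. Theorem \ref{thm: maximizers are solutions} then produces a unique $\varphi_N\in\mathcal{E}^1(X,\theta)$ solving $\theta_{\varphi_N}^n=e^{\varphi_N}\mu_N$, and since $N\mapsto\mu_N$ is increasing, Lemma \ref{lem: domination principle} forces the sequence $(\varphi_N)$ to be decreasing.

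The crucial remaining step, and the main obstacle, is a uniform lower bound on $(\varphi_N)$ guaranteeing that $\varphi:=(\lim_N\varphi_N)^*$ is a genuine $\theta$-psh function with minimal singularities rather than identically $-\infty$. My plan is to construct a $\theta$-psh subsolution $\psi$ with minimal singularities satisfying $\theta_\psi^n\ge e^{\psi}\mu$ in the sense of measures; Lemma \ref{lem: domination principle} applied via $\mu_N\le\mu$ will then force $\psi\le\varphi_N$ for every $N$, and hence $\psi\le\varphi$. A natural candidate for $\psi$ arises by applying Theorem \ref{thm: maximizers are solutions} to the finite non-pluripolar measure $\theta_u^n+\theta_v^n$ to produce $\psi_0\in\mathcal{E}^1(X,\theta)$ with $\theta_{\psi_0}^n=e^{\psi_0}(\theta_u^n+\theta_v^n)$, and then shifting $\psi_0$ by a constant selected using that $|u-V_\theta|$ and $|v-V_\theta|$ are bounded by minimal singularities of $u,v$. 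Once this lower bound is secured, continuity of the non-pluripolar Monge-Amp\`ere operator along decreasing full-mass sequences, combined with monotone convergence on the right-hand side $e^{\varphi_N}\mu_N$, passes to the limit and establishes the equation for $\varphi$. The resulting sandwich $V_\theta-C'\le\varphi\le V_\theta+C$ then delivers minimal singularities and completes the proof.
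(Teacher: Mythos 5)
Your overall architecture matches the paper's: truncate the density, solve the truncated equations by Theorem \ref{thm: maximizers are solutions}, use Lemma \ref{lem: domination principle} to get monotonicity and uniqueness, and pin the decreasing sequence from below by a $\theta$-psh subsolution with minimal singularities before passing to the limit. The uniqueness argument and the a priori bound $\varphi\le\min(u,v)$ are fine (the paper's uniqueness step differs only cosmetically: it takes $\mu=e^{u}\theta_v^n+e^{v}\theta_u^n$ and $\phi=u+v$, so that the reference measure is finite, whereas you take $\phi=0$ and the possibly infinite $\mu$; both are admissible in Lemma \ref{lem: domination principle} as stated).

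However, the step you yourself identify as crucial --- the construction of the subsolution --- has a genuine gap. Your candidate is $\psi=\psi_0-C'$ with $\theta_{\psi_0}^n=e^{\psi_0}(\theta_u^n+\theta_v^n)$. The inequality $\theta_{\psi}^n\ge e^{\psi}\mu$ then reads
\[
e^{\psi_0}\left(\theta_u^n+\theta_v^n\right)\;\ge\; e^{\psi_0-C'}\left(e^{-u}\theta_u^n+e^{-v}\theta_v^n\right),
\]
which after cancelling $e^{\psi_0}$ forces $e^{-u}\le e^{C'}$ $\theta_u^n$-a.e.\ and $e^{-v}\le e^{C'}$ $\theta_v^n$-a.e., i.e.\ $u,v\ge -C'$ on the supports of their Monge--Amp\`ere measures. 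The minimal singularities hypothesis only gives $u\ge V_\theta-C$, and in a genuinely big (non-K\"ahler) class $V_\theta$ is unbounded below while $\theta_u^n$ can charge the region where $u\to-\infty$ (take e.g.\ $u$ with $\theta_u^n=c\,\omega^n$). So no constant shift of $\psi_0$ works, and your argument in effect only covers the case where potentials with minimal singularities are bounded. The fix, which is what the paper does, is to build the subsolution out of $u$ and $v$ themselves so that only the \emph{bounded} quantity $u-v$ enters the exponential: with $\sup_X|u-v|\le 2C$, set $\phi:=\tfrac{u+v}{2}-C-n\log 2$; then $\theta_\phi^n\ge 2^{-n}(\theta_u^n+\theta_v^n)$ while $e^{\phi-u}\le 2^{-n}$ and $e^{\phi-v}\le 2^{-n}$ pointwise, giving $\theta_\phi^n\ge e^{\phi}\mu_N$ for every $N$ and hence the uniform lower bound $\varphi_N\ge\phi$. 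With this replacement the rest of your argument goes through as in the paper.
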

\begin{proof}
        The uniqueness follows from the Lemma \ref{lem: domination principle}. Indeed, assume that $\psi\in \Ec(X,\theta)$ is another solution, i.e. 
        \[
        \theta_{\psi}^n = e^{\psi-u} \theta_{u} ^n + e^{\psi-v} \theta_v^n.
        \]
        Set $\mu=e^u \theta_v^n +e^{v} \theta_u^n$ and $\phi:=u+v$. Then we can write $\theta_{\varphi}^n =e^{\varphi-\phi} \mu$ and $\theta_{\psi}^n=e^{\psi-\phi}\mu$. It thus follows from Lemma \ref{lem: domination principle} that $\varphi=\psi$. 
        
         To prove the existence we approximate $u$ by $u_j:=\max(u,-j)$ and note that these are bounded functions. Observe that, for each $j$,
        \[
        \mu_j:= e^{-u_j} \theta_u^n + e^{-v_j} \theta_v^n
        \]
        is a non-pluripolar positive measure on $X$. For each $j>0$ it follows from Theorem \ref{thm: maximizers are solutions}  that there exists $\varphi_j\in \psh(X,\theta)$ with minimal singularities such that 
        \[
        \theta_{\varphi_j}^n =e^{\varphi_j}\mu_j. 
        \]
        Let $C>0$ be a constant such that $\sup_X |u-v|\leq 2C$. This is possible because $u$ and $v$ have minimal singularities. The function $\phi:=\frac{u+v}{2} -C-n\log 2$ is $\theta$-psh with minimal singularities and it satisfies
        \[
        \theta _{\phi}^n \geq e^{\phi} \mu_j. 
        \]
        It thus follows from Lemma \ref{lem: domination principle} that $\varphi_j\geq \phi$ for all $j$. It also follows from Lemma \ref{lem: domination principle} that $\varphi_j$ is decreasing in $j$, the pointwise limit $\varphi:=\lim_{j\to +\infty} \varphi_j$ has minimal singularities. By continuity of the Monge-Amp\`ere operator (see \cite{BEGZ10}) it follows that  $\varphi$ is the solution we are looking for. 
\end{proof}

As a simple consequence of the above result, we can settle the conjecture of \cite[Remark 2.16]{BEGZ10} about the convexity of the classes $\mathcal E_\chi(X,\theta)$. A similar result in the K\"ahler case was obtained in \cite{Dar14a}.
\begin{coro}\label{cor: BBEGZ_conj} Suppose $\chi \in \mathcal W^- $. Then $\mathcal E_\chi(X,\theta)$ is convex.
\end{coro}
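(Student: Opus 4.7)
The plan is to deduce the convexity of $\mathcal{E}_\chi(X,\theta)$ directly from Theorem \ref{thm: sub-extension}, combined with the standard monotonicity property of finite-energy classes with respect to singularity type.

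Let me fix $\varphi,\psi \in \mathcal{E}_\chi(X,\theta)$ and $t\in [0,1]$, and set $u:= t\varphi + (1-t)\psi$. Since $\varphi$ and $\psi$ are $\theta$-psh, so is the convex combination $u$. The key observation is the pointwise bound
\[
u \;=\; t\varphi + (1-t)\psi \;\geq\; \min(\varphi,\psi) \;\geq\; P_\theta\bigl(\min(\varphi,\psi)\bigr) \;=\; P_\theta(\varphi,\psi),
\]
where the second inequality follows from the very definition of the envelope $P_\theta$. By Theorem \ref{thm: sub-extension}, applied to the pair $\varphi,\psi \in \mathcal{E}_\chi(X,\theta)$, we have $P_\theta(\varphi,\psi)\in \mathcal{E}_\chi(X,\theta)$.

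To conclude, I will invoke the fact that $\mathcal{E}_\chi(X,\theta)$ is stable under passing to less singular potentials: if $w\in \mathcal{E}_\chi(X,\theta)$ and $u\in \textup{PSH}(X,\theta)$ with $u \geq w$, then $u\in \mathcal{E}_\chi(X,\theta)$. This is a standard consequence of the comparison principle for energies, as established in \cite[Proposition 2.14]{BEGZ10}: it ensures $\int_X (-\chi)(u-V_\theta)\,\theta_u^n \leq C\int_X(-\chi)(w-V_\theta)\,\theta_w^n$ for a constant $C$ depending only on $\chi$, so full mass and finite $\chi$-energy are both inherited upward. Applying this with $w:=P_\theta(\varphi,\psi)$ gives $u\in \mathcal{E}_\chi(X,\theta)$, which is exactly the desired convexity.

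There is no essential obstacle here once Theorem \ref{thm: sub-extension} is in hand: the entire content is that the envelope $P_\theta(\varphi,\psi)$ captures the ``worst'' singularity between the two potentials while still retaining finite $\chi$-energy, so any function lying above it (in particular any convex combination of $\varphi,\psi$) automatically belongs to $\mathcal{E}_\chi(X,\theta)$. The proof is therefore essentially a one-line corollary of the preceding theorem.
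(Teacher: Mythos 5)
Your proof is correct and follows exactly the paper's argument: the convex combination dominates $P_\theta(\varphi,\psi)$, which lies in $\mathcal E_\chi(X,\theta)$ by Theorem \ref{thm: sub-extension}, and the monotonicity property of \cite[Proposition 2.14]{BEGZ10} then gives the conclusion. Nothing essential differs from the paper's proof.
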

\begin{proof} Given $u_0,u_1 \in \mathcal E_\chi(X,\theta)$ it follows that $P_\theta(u_0,u_1) \leq tu_0 + (1-t)u_1, \ t \in [0,1]$. By the above result $P_\theta(u_0,u_1) \in \mathcal E_\chi(X,\theta)$. Now \cite[Proposition 2.14]{BEGZ10} implies that  $tu_0 + (1-t)u_1 \in \mathcal E_\chi(X,\theta)$.
\end{proof}

If $f$ is smooth, it follows from a balayage argument that  $\theta_{P_{\theta}(f)}^n$ is concentrated on the contact set $\{P_{\theta}(f)=f\}$. Using the capacity theory developed in the previous subsection, the result also holds for more general functions $f$:

\begin{prop}\label{prop: BD vanish general}
        Assume that $\{ \theta\}$ is big, $P_{\theta}(f)\neq -\infty$ and $f$ is  quasi-continuous and usc on $X$.  Then $\theta_{P_{\theta}(f)}^n$ does not charge $\{P_{\theta}(f) <f\}$.
\end{prop}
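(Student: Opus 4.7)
The strategy is to reduce to the balayage statement for continuous $f$ recalled just before the proposition, by approximating $f$ from above by a decreasing sequence of continuous functions and passing to the limit via Corollary~\ref{cor: convergence big}. Set $\varphi:=P_\theta(f)$. After the routine reduction to the case when $f$ is bounded (truncate via $f^N:=\max(f,-N)$, apply the statement to each $f^N$, and exploit $\{\varphi<f\}\subset\{f>-\infty\}$ together with monotone convergence of envelopes), pick the Lipschitz sup-convolutions $f_k(x):=\sup_y(f(y)-k\,d(x,y))$ which form a continuous, decreasing sequence with $f_k\searrow f$ pointwise (by upper semicontinuity). Let $\varphi_k:=P_\theta(f_k)$. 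Since each $f_k$ is bounded and continuous, $\varphi_k$ has minimal singularities, and the standard argument (the decreasing limit $\lim_k\varphi_k$ is $\theta$-psh, is $\leq f$ outside a pluripolar set, hence is $\leq\varphi$) gives $\varphi_k\searrow\varphi$.

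The balayage principle for continuous envelopes, recalled just before the proposition, gives $\theta_{\varphi_k}^n(\{\varphi_k<f_k\})=0$, so $\theta_{\varphi_k}^n$ is concentrated on the contact set $\{\varphi_k=f_k\}$. On this set $\varphi_k=f_k\geq f\geq\varphi$, hence pointwise there $0\leq f-\varphi\leq\varphi_k-\varphi$; integrating against $\theta_{\varphi_k}^n$,
\[
0\;\leq\;\int_X(f-\varphi)\,\theta_{\varphi_k}^n\;\leq\;\int_X(\varphi_k-\varphi)\,\theta_{\varphi_k}^n.
\]
The sequence $\varphi_k-\varphi$ is quasi-continuous, uniformly bounded (both $\varphi_k$ and $\varphi$ have minimal singularities, uniformly in $k$), and decreases to $0$, so Corollary~\ref{cor: convergence big} (applied with varying test functions and currents) forces the right-hand side to tend to $0$.

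For the left-hand side, the function $f-\varphi\geq 0$ is quasi-continuous but possibly $+\infty$ on the pluripolar set $\{\varphi=-\infty\}$. I truncate via $h_M:=\min(f-\varphi,M)$, which is bounded and quasi-continuous. Applying Corollary~\ref{cor: convergence big} to $h_M$ (as a sequence constant in $k$) yields $\int_X h_M\,\theta_{\varphi_k}^n\to\int_X h_M\,\theta_\varphi^n$, and chaining with $\int_X h_M\,\theta_{\varphi_k}^n\leq\int_X(f-\varphi)\,\theta_{\varphi_k}^n\leq\int_X(\varphi_k-\varphi)\,\theta_{\varphi_k}^n\to 0$ gives $\int_X h_M\,\theta_\varphi^n\leq 0$. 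Letting $M\to\infty$ and using monotone convergence (the measure $\theta_\varphi^n$ ignores $\{\varphi=-\infty\}$) produces $\int_X(f-\varphi)\,\theta_\varphi^n\leq 0$; since the integrand is non-negative it must vanish $\theta_\varphi^n$-almost everywhere, giving $\theta_\varphi^n(\{\varphi<f\})=0$. The principal technical obstacle is that $f-\varphi$ is not globally bounded even when $f$ is (because $\varphi$ can be $-\infty$ on a pluripolar set), which is precisely why the truncation $h_M$ is inserted to meet the hypotheses of Corollary~\ref{cor: convergence big}; the quasi-continuity of $f$ is the hypothesis that allows $f-\varphi$ to be quasi-continuous in the first place, which is crucial for invoking that corollary.
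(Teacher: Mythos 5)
Your argument is sound in the case where $P_\theta(f)$ has minimal singularities (for instance when $f$ is bounded): the sup-convolution approximation, the balayage statement for continuous obstacles, the integral estimate on the contact set, and the two applications of Corollary \ref{cor: convergence big} (to $\varphi_k-\varphi\searrow 0$ and to the truncations $h_M$) all fit together correctly, and this is a genuinely different and rather clean route compared with the paper, which instead estimates the measures directly on a quasi-open set via the capacity comparison of Theorem \ref{thm: comparison of capacity} and plurifine locality.

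The genuine gap is the ``routine reduction'' to bounded $f$. The proposition is stated, and crucially used later (Theorem \ref{thm: Dravas criterion 1} applies it to $f=\min(\varphi+t,\psi)$ with $\psi\in\Ec(X,\theta)$), for $f$ that is usc and quasi-continuous but typically unbounded below, so that $\varphi=P_\theta(f)$ need not have minimal singularities. Knowing the statement for each truncation $f^N=\max(f,-N)$ only tells you that $\theta^n_{\varphi_N}$ vanishes on $\{\varphi_N<f^N\}$, where $\varphi_N=P_\theta(f^N)$; it says nothing directly about the measure $\theta^n_{\varphi}$ of the limit envelope, and ``monotone convergence of envelopes'' together with $\{\varphi<f\}\subset\{f>-\infty\}$ does not transfer the vanishing: the set $\{\varphi<f\}$ is only quasi-open, the measures change with $N$, and the limit potential $\varphi$ may fail to have minimal singularities, so your own tool, Corollary \ref{cor: convergence big}, is not applicable (its hypotheses require minimal singularities of the limit and uniform boundedness of the test functions, and $\varphi_N-\varphi$ need not be bounded here). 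Carrying the vanishing through the decreasing limit $\varphi_N\searrow\varphi$ is exactly the delicate point, and the paper resolves it by working with the canonical cutoffs $\max(\varphi_j,V_\theta-l)$, using locality of the non-pluripolar product in the plurifine topology, uniform capacity bounds on the small-capacity exceptional sets coming from quasi-continuity, and Bedford--Taylor convergence, before letting $l\to\infty$. In short: either you must run your integral argument at the level of the cutoffs $\max(\cdot,V_\theta-l)$ (where the contact-set concentration you use is no longer available in the clean form you need), or you must reproduce the capacity/plurifine limit passage of the paper; as written, the reduction step hides the main difficulty of the general case.
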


\begin{proof}
Without loss of generality we can assume that $f\leq 0$. If $f$ is smooth then the result follows from a balayage argument (or directly from Theorem \ref{thm: weak BD}). To treat the general case, we approximate $f$ from above (by semicontinuity) by a sequence of smooth functions $(f_j)$.  We can also assume that $f_j\leq 0$. Set $\varphi_j:=P_{\theta}(f_j)$, $\varphi:=P_{\theta}(f)$ and note that $\varphi_j\searrow  \varphi$. For each $j\in \NN$ the measure $\theta_{\varphi_j}^n$ vanishes in the set $\{\varphi_j<f_j\}$. Now, we want to pass to the limit as $j\to +\infty$.  We first fix $k,l\in \NN$ and set 
        \[
        U_{k,l} = \{\varphi_k < f\} \cap \{\varphi > V_{\theta}-l\}. 
        \]
For any $j>k$, note that on $U_{k,l}$ we have $\varphi_j \geq \varphi>V_{\theta}-l$ and 
$\{\varphi_k < f\} \subset \{\varphi_j< f_j\}$.
It thus follows from definition of the non-pluripolar product (see \cite{BEGZ10}) that for any $j>k$, the measure $\theta_{\max(\varphi_j,V_{\theta}-l)}^n=\theta_{\varphi_j}^n$ vanishes on $U_{k,l}$. By assumption $f$ is quasi-continuous, hence $U_{k,l}$ is quasi open. More precisely, for any fixed $\vep>0$ there exists an open set $V_{\vep}$ such that the set  $G_{\vep}:=(V_{\vep} \setminus U_{k,l} \cup U_{k,l} \setminus V_{\vep})$ satisfies $\capo(G_{\vep})\leq \vep$. Observe that for fixed $l$ all the functions $\max(\varphi_j,V_{\theta}-l)$ are sandwiched between $V_\theta-l$ and $V_\theta$. It then follows that
        \[
        \sup_{j\in \NN} \int_{G_{\vep}} \theta_{\max(\varphi_j,V_{\theta}-l)}^n \leq A\capa_\theta (G_{\vep}) \leq A'\capo^{1/n}(G_{\vep}) \leq A'\vep^{1/n}, 
        \]
where the last inequality follows from the comparison of capacities in Theorem \ref{thm: comparison of capacity}.
Consequently, $\sup_{j\in \NN} \int_{V_{\vep}} \theta_{\max(\varphi_j,V_{\theta}-l)}^n  \leq A'\vep^{1/n}$, and 
the continuity of the Monge-Amp\`ere operator allows to take the limit, and we ultimately obtain: 
\[
\int_{U_{k,l}} \theta_{\max(\varphi,V_{\theta}-l)} ^n\leq C\vep^{1/n},  
\]
for some positive constant $C$ independent of $\vep$ (but dependent on $l$). Now letting $\vep \to 0$ we see that $\theta_{\max(\varphi,V_{\theta}-l)}^n$ vanishes in $U_{k,l}$. Letting $l\to +\infty$, and using the definition of the non-pluripolar product, we see that $\theta_\varphi^n$ vanishes in $\{\varphi_k<f\}$. Now, letting  $k\to +\infty$ we obtain the result. 
\end{proof}

\begin{theorem}
        \label{thm: Dravas criterion 1}
        Assume that $\psi, \varphi\in \Ec(X,\theta)$. Then $P_{[\theta,\varphi]}(\psi)=\psi$. 
\end{theorem}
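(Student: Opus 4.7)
The plan is to establish the non-trivial direction $\psi \leq P_{[\theta,\varphi]}(\psi)$, since $P_{[\theta,\varphi]}(\psi) \leq \psi$ is immediate from the defining supremum. We set $u_C := P_\theta(\min(\varphi + C, \psi))$, an increasing family of $\theta$-psh functions bounded above by $\psi$. By Theorem \ref{thm: sub-extension} each $u_C$ lies in $\mathcal{E}(X,\theta)$. Writing $u^* := (\sup_C u_C)^*$, we have $u^* = P_{[\theta,\varphi]}(\psi)$ and $u^* \leq \psi$. The monotone convergence $u_C \nearrow u^*$ of elements of $\mathcal{E}(X,\theta)$ gives, via \cite[Theorem 2.17]{BEGZ10}, that $\theta_{u_C}^n \to \theta_{u^*}^n$ weakly; since total mass is preserved under this weak limit on a compact space, $u^* \in \mathcal{E}(X,\theta)$.

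With $u^*, \psi \in \mathcal{E}(X,\theta)$ and $u^* \leq \psi$, Proposition \ref{prop: domination principle} reduces matters to showing $\theta_{u^*}^n(\{u^* < \psi\}) = 0$. We first locate the mass of $\theta_{u_C}^n$. As $\min(\varphi + C, \psi)$ is usc and quasi-continuous, Proposition \ref{prop: BD vanish general} confines $\theta_{u_C}^n$ to the contact set $\{u_C = \min(\varphi + C, \psi)\}$, which we decompose as $A_C \cup B_C$ with $A_C := \{u_C = \psi\}$ and $B_C := \{u_C = \varphi + C < \psi\}$. On $A_C$ one has $u^* \geq u_C = \psi$, hence $A_C \subset \{u^* = \psi\}$. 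On $B_C$, classical plurifine locality of the non-pluripolar product yields $\mathbf{1}_{B_C}\theta_{u_C}^n = \mathbf{1}_{B_C}\theta_{\varphi}^n$; since $B_C \subset \{\psi - \varphi > C\}$ and the latter intersection over $C$ is contained in the pluripolar set $\{\varphi = -\infty\}$, non-pluripolarity of $\theta_\varphi^n$ forces $\theta_\varphi^n(B_C) \to 0$. Combining, $\theta_{u_C}^n(\{u^* < \psi\}) \leq \theta_{u_C}^n(B_C) \to 0$.

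It remains to pass this vanishing to the limit measure. We already have the weak convergence $\theta_{u_C}^n \to \theta_{u^*}^n$, but $\{u^* < \psi\}$ is only plurifine open. We would use the quasi-continuity of $u^*$ and $\psi$: outside an open set $U_\varepsilon$ of arbitrarily small capacity both are continuous, so $\{u^* < \psi\} \setminus U_\varepsilon$ is relatively open in $X \setminus U_\varepsilon$ and hence the trace of a genuinely open subset of $X$. The uniform capacity--mass comparison supplied by Theorem \ref{thm: comparison of capacity} then lets us absorb the small-capacity exceptional set uniformly in $C$, apply weak lower semicontinuity on the resulting open set, and conclude $\theta_{u^*}^n(\{u^* < \psi\}) = 0$. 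Proposition \ref{prop: domination principle} then delivers $\psi \leq u^*$, completing the proof.

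The main obstacle is this final limit passage: weak convergence of Monge--Amp\`ere measures furnishes lower semicontinuity only on genuinely open sets, whereas our natural target $\{u^* < \psi\}$ is merely plurifine open. Reconciling these two notions of openness via the quasi-continuity of $\theta$-psh functions together with the uniform capacity control of Theorem \ref{thm: comparison of capacity} is the delicate technical point of the argument.
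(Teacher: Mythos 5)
Your overall skeleton (show the increasing envelopes $u_C=P_\theta(\min(\varphi+C,\psi))$ stay in $\Ec(X,\theta)$, prove the limit's Monge--Amp\`ere measure puts no mass on $\{u^*<\psi\}$, then invoke the domination principle) is the same as the paper's, but the step on which your argument hinges has a genuine gap. You claim that on $B_C=\{u_C=\varphi+C<\psi\}$ one has $\mathbf{1}_{B_C}\theta_{u_C}^n=\mathbf{1}_{B_C}\theta_{\varphi}^n$ ``by plurifine locality''. Plurifine locality of the non-pluripolar product applies only when two potentials coincide on a \emph{plurifine open} set; $B_C$ is a subset of the contact set $\{u_C=\varphi+C\}$, i.e.\ the zero set of the nonpositive function $u_C-\varphi-C$, which is plurifinely closed, not open, so the locality principle says nothing there. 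Worse, the elementary inequality that is available on contact sets of $u_C\leq\varphi+C$ goes in the \emph{opposite} direction ($\mathbf{1}_{\{u_C=\varphi+C\}}\theta_{\varphi}^n\leq \mathbf{1}_{\{u_C=\varphi+C\}}\theta_{u_C}^n$), so it cannot give the decay $\theta_{u_C}^n(B_C)\to 0$ you need. The bound you actually require is the envelope estimate $\theta_{P_\theta(u,v)}^n\leq \mathbf{1}_{\{P_\theta(u,v)=u\}}\theta_u^n+\mathbf{1}_{\{P_\theta(u,v)=v\}}\theta_v^n$, a substantially harder result which is not proved (nor used) in this paper; it cannot be waved through as classical. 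Note that the paper's proof of Theorem \ref{thm: Dravas criterion 1} sidesteps this entirely: Proposition \ref{prop: BD vanish general} is only used on the region $\{\psi_t<\psi<\varphi+t\}$, where the relevant contact is with $\psi$, and at the end $\{v<\psi\}$ is exhausted by the sets $\{v<\psi<\varphi+s\}$ up to the pluripolar set $\{\varphi=-\infty\}$, which the non-pluripolar measure $\theta_v^n$ ignores. No information about the measure on the contact set with $\varphi+C$ is ever needed, and your proof could be repaired by adopting this observation.

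The final limit passage is also incomplete as sketched, and not only because $\{u^*<\psi\}$ is merely plurifine open. To absorb the small-capacity exceptional set uniformly in $C$ you need a uniform bound of the form $\sup_C\theta_{u_C}^n(G)\lesssim$ (function of $\capa_\theta(G)$), but such capacity domination of Monge--Amp\`ere measures holds for potentials sandwiched between $V_\theta-j$ and $V_\theta$ (with a constant depending on $j$), not for general elements of $\Ec(X,\theta)$; Theorem \ref{thm: comparison of capacity} by itself only compares capacities and does not supply this. This is precisely why the paper runs the $t\to\infty$ limit on the truncations $\max(\psi_t,V_\theta-j)$ and $\max(v,V_\theta-j)$, and only at the very end lets $j\to\infty$ using the definition of the non-pluripolar product. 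One could salvage your version with a uniform energy bound for the family $u_C$ (they all dominate a fixed element of some $\Ec_\chi$), but that argument is not in your proposal; as written, both the $B_C$ identity and the uniform capacity control are unjustified.
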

\begin{proof}
        For each $t>0$, since $\min(\varphi+t,\psi)$ is usc and quasi-continuous, it follows from Proposition \ref{prop: BD vanish general} that $\theta_{\psi_t}^n$ vanishes on $\{\psi_t<\min(\varphi+t,\psi)\}$, where $\psi_t:=P_{\theta}(\min(\varphi+t,\psi))$. Because $\{\psi_t<\psi < \varphi +t\}\subset \{\psi_t<\min(\varphi+t,\psi)\}$ it thus follows that 
        \[
        \int_{\{\psi_t<\psi < \varphi +t\}} \theta_{ \psi_t}^n=0. 
        \]
        Fix $s>0,j>0$ and set $v:=P_{[\theta,\varphi]}(\psi)=(\lim_{t\to +\infty}\psi_t)^{*}$, $\psi_{t,j}:=\max(\psi_t,V_{\theta}-j)$, $v_j:=\max(v,V_{\theta}-j)$. It is clear that $\psi_{t,j} \nearrow v_j$ almost everywhere as $t\nearrow \infty$. By definition of the non-pluripolar Monge-Amp\`ere measure it follows that
        \[
        \int_{\{V_{\theta}-j<\psi_t<\psi<\varphi +t\}} \theta_{\psi_{t,j}}^n=0. 
        \]
        For $t>s$, we have $\psi_s \leq \psi_t \leq v \leq \psi$. Consequently, $\{V_{\theta}-j<\psi_s \leq v<\psi<\varphi +s\}\subset \{V_{\theta}-j<\psi_t<\psi<\varphi +t\}$ and we have
        \[
        \int_{\{V_{\theta}-j<\psi_s \leq v<\psi<\varphi +s\}} \theta_{\psi_{t,j}}^n=0. 
        \]
        Now, using the same trick as in the proof of Proposition \ref{prop: BD vanish general} we let $t\to +\infty$ and arrive at 
        \[
        \int_{\{V_{\theta}-j<\psi_s \leq v<\psi<\varphi +s\}} \theta_{v_j}^n =0.
        \] 
        Letting $s\to +\infty$, then $j\to +\infty$, and noting that $\theta_v^n$ is a non-pluripolar measure, we can conclude that $\theta_v^n$ vanishes on $\{v<\psi\}$. Finally, $v= P_{[\theta,\varphi]}(\psi)\in \Ec(X, \theta)$  by Theorem \ref{thm: sub-extension}, thus we can apply the domination principle (Proposition \ref{prop: domination principle}) to conclude the proof. 
\end{proof}

\section{Weak geodesics in big cohomology classes}\label{sect: weak geodesic}

\subsection{Berndtsson's construction}

We introduce a notion of weak geodesics in big cohomology classes mimicking  Berndtsson's construction in \cite[Section 2.2]{Bern}. Fix $\theta$ a smooth closed $(1,1)$-form such that $\{\theta\}$ is big and also fix $\varphi_0,\varphi_1 \in \psh(X,\theta)$ with minimal singularities. A \emph{subgeodesic} of $\varphi_0$, $\varphi_1$ is a curve $[0,1]\ni t\mapsto u_t\in \psh(X,\theta)$ such that
\begin{enumerate}
        \item[(i)] For each $t\in [0,1]$, the function $u_t$ has minimal singularities,
        \item[(ii)] $u_{0,1}=\lim_{t\to 0,1}u_t\leq \varphi_{0,1}$ pointwise, 
        \item[(iii)] The complexification  $X\times D\ni (x,z)\mapsto U(x,z):=u_{\log|z|}(x)$ is $\pi^*\theta$-psh on $X\times D$, where $D:=\{z\in \CC\setdef 1< |z|< e\}$ is the annulus in $\CC$ and $\pi: X \times D \to X$ is the trivial projection.  
\end{enumerate}

Let us mention that a curve $(\alpha,\beta) \in t \to u_t \in \psh(X,\theta)$ is a (general) subgeodesic if it satisfies only the appropriate version of (iii) above. In this case $\alpha,\beta$ may even take $\pm \infty$ as a value. We will not make a difference between the curve $t \to u_t$ and its complexification $U$. 
 
The \emph{weak geodesic} $[0,1] \ni t \to \varphi_t \in \psh(X,\theta)$ with minimal singularities joining $\varphi_0$ to $\varphi_1$ is defined as the envelope of all subgeodesics, i.e.,
\begin{equation}\label{eq: geoddef}
\varphi_l(x):= \sup \left\{u_l(x), \ \textrm{where }t \to u_t \textrm{ is a subgeodesic of} \ \varphi_0,\varphi_1\right\}, \ \ l \in [0,1], x \in X.
\end{equation}
\begin{lemma}
        \label{lem: boundary limit of weak geodesic}
        Let $t \to \varphi_t$ be the weak geodesic joining $\varphi_0,\varphi_1 \in \psh(X,\theta)$ with minimal singularities, constructed as above. Then there exists $C=C(\varphi_0,\varphi_1)>0$ such that 
        \begin{equation}
                \label{eq: boundary limit of weak geodesic}
                |\varphi_t -\varphi_{t'}| \leq C|t-t'|, \ \  t,t'\in [0,1].
        \end{equation}
Additionally, for the complexification $\Phi(x,z):= \varphi_{\log|z|}(x)$ we have 
\begin{equation}\label{eq: geodesicequation}
(\pi^*\theta +dd^c \Phi)^{n+1}=0 \textup{ on}\; \Amp(\{\theta\}) \times D,
\end{equation} 
where equality is understood in the weak sense of measures. 
\end{lemma}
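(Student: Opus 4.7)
The plan is to combine an explicit subgeodesic construction, convexity in $t$, and a Perron-type balayage argument.

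For the Lipschitz bound \eqref{eq: boundary limit of weak geodesic}, set $C := \sup_X|\varphi_0 - \varphi_1|$, which is finite since $\varphi_0,\varphi_1$ have minimal singularities, and verify that
\[
u_t^- := \max\bigl(\varphi_0 - Ct,\; \varphi_1 - C(1-t)\bigr)
\]
is a subgeodesic: each argument of the max has complexification $\varphi_i(x) \pm C\log|z| + \textup{const.}$, which is $\pi^*\theta$-psh because $\log|z|$ is pluriharmonic; the maximum preserves minimal singularities; and the choice of $C$ forces the boundary limits at $t=0,1$ to equal $\varphi_0$ and $\varphi_1$. Since condition (ii) in the definition of a subgeodesic bounds the $\sup$-limits by $\varphi_0,\varphi_1$, one obtains $\lim_{t\to 0^+}\varphi_t = \varphi_0$ and $\lim_{t\to 1^-}\varphi_t = \varphi_1$, together with the lower bounds $\varphi_t \geq \varphi_0 - Ct$ and $\varphi_t \geq \varphi_1 - C(1-t)$.

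Next, the $\pi^*\theta$-psh condition on the complexification of a subgeodesic forces, via the annulus block of $dd^c U$ (where $\pi^*\theta$ vanishes), convexity of $t \mapsto u_t(x)$ for each fixed $x$; this convexity passes to $t\mapsto \varphi_t(x)$ as a supremum of convex functions. Combined with the boundary values from the previous paragraph, convexity yields $\varphi_t \leq (1-t)\varphi_0 + t\varphi_1$, which together with the lower bounds gives $|\varphi_t-\varphi_0|\leq Ct$ and $|\varphi_t-\varphi_1|\leq C(1-t)$. A convex function on $[0,1]$ whose boundary slopes are constrained to $[-C,C]$ has its monotone derivative contained in the same interval, hence is $C$-Lipschitz, establishing \eqref{eq: boundary limit of weak geodesic}.

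For \eqref{eq: geodesicequation}, the Lipschitz bound combined with $\varphi_i$ having minimal singularities shows that the complexification $\Phi$ is $\pi^*\theta$-psh on $X\times D$ and locally bounded on $\Amp(\{\theta\})\times D$. The Monge--Amp\`ere identity I would obtain via maximality. Fix a small Euclidean ball $B \Subset \Amp(\{\theta\})\times D$ in a coordinate chart, and any $\pi^*\theta$-psh function $\Psi$ on a neighborhood of $\overline B$ with $\Psi \leq \Phi$ on $\partial B$; set $\tilde \Phi := \max(\Psi,\Phi)$ on $B$ and $\tilde \Phi := \Phi$ outside $B$. Since $B$ is compactly contained away from $X\times \partial D$, the glued $\tilde \Phi$ is $\pi^*\theta$-psh globally on $X\times D$, keeps the boundary limits and minimal singularity type of $\Phi$, and so is itself a subgeodesic. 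The definition of $\Phi$ as the envelope of subgeodesics forces $\tilde \Phi \leq \Phi$, which yields $\Psi \leq \Phi$ on $B$. Hence $\Phi$ is maximal on $\Amp(\{\theta\})\times D$, and the Bedford--Taylor characterization of maximal locally bounded $\pi^*\theta$-psh functions gives $(\pi^*\theta + dd^c \Phi)^{n+1}=0$ there.

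The main obstacle I foresee is the balayage step: one must verify that after gluing, $\tilde \Phi$ genuinely qualifies as a subgeodesic, in particular that its boundary limits at $t=0,1$ remain $\leq \varphi_0,\varphi_1$ and that its singularities are still minimal. This is precisely where restricting $B$ to the ample locus (so $\Phi$ is locally bounded) and keeping $B$ compactly away from $X\times\partial D$ (so $\tilde \Phi = \Phi$ near $t=0,1$) are essential.
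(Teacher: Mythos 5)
Your Lipschitz argument is essentially the paper's: the same barrier subgeodesic $\max(\varphi_0-Ct,\varphi_1+C(t-1))$, the upper bound $\varphi_t\le(1-t)\varphi_0+t\varphi_1$ from $t$-convexity, and the resulting two-sided slope control; that half is fine. The gap is in the balayage step for \eqref{eq: geodesicequation}. In this paper a subgeodesic is by definition a \emph{curve} $t\mapsto u_t$, so its complexification $U(x,z)=u_{\log|z|}(x)$ is automatically $S^1$-invariant in $z$, and the envelope \eqref{eq: geoddef} is taken only over such invariant candidates. Your glued function $\tilde\Phi=\max(\Psi,\Phi)$ on a Euclidean ball $B\Subset\Amp(\{\theta\})\times D$ with an arbitrary local competitor $\Psi$ is in general \emph{not} $S^1$-invariant, hence is not the complexification of any curve and is not a subgeodesic in the sense of the definition; the sentence ``the glued $\tilde\Phi$ is itself a subgeodesic, so the envelope forces $\tilde\Phi\le\Phi$'' therefore does not apply as written, and this is exactly the inequality your maximality argument hinges on. The gap is repairable but needs an explicit extra step: either show first that the envelope over all $\pi^*\theta$-psh functions on $X\times D$ with the given boundary data coincides with the envelope over subgeodesics, or replace $\tilde\Phi$ by the usc regularization of $\sup_{\alpha\in[0,2\pi]}\tilde\Phi(x,e^{i\alpha}z)$, which is $\pi^*\theta$-psh, $S^1$-invariant, agrees with $\Phi$ near $X\times\partial D$ (so conditions (i)--(ii) persist), dominates $\Psi$ on $B$, and is thus a legitimate subgeodesic to compare with $\Phi$. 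Note that averaging over $S^1$ instead of taking the rotational supremum would \emph{not} give the pointwise bound $\Psi\le\Phi$ you need.

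A secondary, smaller point: you establish domination only against competitors $\Psi$ defined on a neighborhood of $\overline B$ and $\le\Phi$ on $\partial B$, but the Bedford--Taylor/Klimek characterization ``maximal locally bounded psh $\Leftrightarrow$ vanishing Monge--Amp\`ere'' uses competitors that are merely psh on $B$ and usc on $\overline B$ with the boundary inequality (the standard construction of a violating competitor is a Perron--Bremermann envelope defined only inside $B$). Your gluing lemma works verbatim for this larger class, since it only needs $\limsup_{B\ni z\to\zeta}\Psi(z)\le\Phi(\zeta)$ on $\partial B$, so you should simply state the maximality step for these competitors; alternatively, follow the paper's (Berndtsson's) route and replace $\Phi|_B$ by the Dirichlet-problem solution with boundary data $\Phi$, concluding equality rather than invoking the abstract characterization.
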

\begin{proof}
The proof is essentially the same as in \cite[Section 2.2]{Bern}, so we only point out the ideas. Consider the following (barrier) subgeodesic of $\varphi_0,\varphi_1$: 
        \[
        u_t(x):=\max(\varphi_0-Ct, \varphi_1+C(t-1)),
        \] 
        where $C$ is a positive constant such that $\varphi_1-C\leq \varphi_0\leq \varphi_1+C$. It is clear that $[0,1]\ni t \to u_t \in \psh(X,\theta)$ is a subgeodesic of $\varphi_0,\varphi_1$, and by $t$-convexity of $t \to \varphi_t$ we can write $u_t \leq \varphi_t \leq (1-t)\varphi_0 + t\varphi_1$, hence the conclusion about Lipschitz continuity of $t \to \varphi_t$ follows. 

The proof of \eqref{eq: geodesicequation} follows from a standard balayage argument and we refer the interested reader to \cite[Section 2.2]{Bern}, to see how the ideas from there generalize to our setting.
\end{proof}

Next we prove a version of the comparison principle:
\begin{prop}
        \label{prop: comparison principle}
        Assume that $u,v\in \psh(X\times D,\pi^*\theta)$ satisfies $V_\theta -C \leq u_s,v_s, s \in D$ for some $C>0$. If $\liminf_{(x,z)\to X\times \partial D}(u-v)\geq 0$ then 
        \begin{equation*}
                \int_{\{u<v\}\cap \Amp(\{\theta\}) \times D} (\pi^*\theta +dd^c v)^{n+1}  \leq \int_{\{u<v\}\cap \Amp(\{\theta\}) \times D} (\pi^*\theta +dd^c u)^{n+1}.
        \end{equation*} 
\end{prop}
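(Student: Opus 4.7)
The plan is to adapt the standard comparison-principle argument of \cite[Corollary 2.3]{BEGZ10} to the manifold-with-boundary setting of $X\times D$, using the hypothesis at $X\times\partial D$ as the substitute for a compact closed base.

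Fix $\varepsilon>0$ and set $w_\varepsilon:=\max(u,v-\varepsilon)$, which is $\pi^*\theta$-psh and satisfies $w_\varepsilon\geq V_\theta-C-\varepsilon$. By the boundary hypothesis, $u>v-\varepsilon$ on an open neighbourhood $N_\varepsilon$ of $X\times\partial D$ in $X\times\overline{D}$, so $w_\varepsilon\equiv u$ throughout $N_\varepsilon\cap(X\times D)$. Writing $\Omega:=\Amp(\{\theta\})$, the plurifine locality of the non-pluripolar product \cite[Proposition 1.4]{BEGZ10} applied on the plurifine open sets $\{u>v-\varepsilon\}$ and $\{u<v-\varepsilon\}$ gives
\[
\int_{\Omega\times D}(\pi^*\theta+dd^c w_\varepsilon)^{n+1}=\int_{\{u>v-\varepsilon\}\cap\Omega\times D}(\pi^*\theta+dd^c u)^{n+1}+\int_{\{u<v-\varepsilon\}\cap\Omega\times D}(\pi^*\theta+dd^c v)^{n+1}+R_\varepsilon,
\]
with $R_\varepsilon:=\int_{\{u=v-\varepsilon\}\cap\Omega\times D}(\pi^*\theta+dd^c w_\varepsilon)^{n+1}\geq 0$. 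The crux is then the total-mass identity
\[
\int_{\Omega\times D}(\pi^*\theta+dd^c w_\varepsilon)^{n+1}=\int_{\Omega\times D}(\pi^*\theta+dd^c u)^{n+1};
\]
granting this and discarding $R_\varepsilon\geq 0$, one obtains $\int_{\{u<v-\varepsilon\}\cap\Omega\times D}(\pi^*\theta+dd^c v)^{n+1}\leq \int_{\{u\leq v-\varepsilon\}\cap\Omega\times D}(\pi^*\theta+dd^c u)^{n+1}$, and since both sets increase to $\{u<v\}$ as $\varepsilon\searrow 0$, monotone convergence finishes the argument.

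To prove the mass identity I would expand telescopically
\[
(\pi^*\theta+dd^c w_\varepsilon)^{n+1}-(\pi^*\theta+dd^c u)^{n+1}=dd^c(w_\varepsilon-u)\wedge T_\varepsilon,
\]
where $T_\varepsilon:=\sum_{k=0}^{n}(\pi^*\theta+dd^c w_\varepsilon)^{n-k}\wedge(\pi^*\theta+dd^c u)^{k}$ is a closed positive current. The difference $w_\varepsilon-u\geq 0$ is globally bounded by $2C+\varepsilon$ and vanishes on $N_\varepsilon$; choosing a compact $K\Subset D$ with $D\setminus K\subset\pi_D(N_\varepsilon)$, it also vanishes on a neighbourhood of $X\times\partial K$. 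Since $X$ has no boundary, Stokes' theorem on $X\times K$ then yields $\int_{X\times K}dd^c(w_\varepsilon-u)\wedge T_\varepsilon=0$, and as $T_\varepsilon$ does not charge the pluripolar set $(X\setminus\Omega)\times K$, this coincides with the corresponding integral over $\Omega\times D$, establishing the identity.

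The main obstacle is making this Stokes step rigorous: $u,w_\varepsilon$ are only locally bounded on $\Omega$ (they may be $-\infty$ at the non-ample locus) and $T_\varepsilon$ is only a non-pluripolar (BEGZ) product, so classical Bedford--Taylor integration by parts does not apply directly. I would handle this via the canonical truncation $u^{(k)}:=\max(u,V_\theta-k)$, $w_\varepsilon^{(k)}:=\max(w_\varepsilon,V_\theta-k)$, which reduces everything to the bounded setting where Stokes is classical (and $w_\varepsilon^{(k)}-u^{(k)}$ still vanishes near $X\times\partial K$), and then passing to the limit $k\to\infty$ using the defining monotone convergence of the non-pluripolar product together with the minimal-singularity bounds $u,w_\varepsilon\geq V_\theta-C$ to ensure the truncations stabilise and the integrals converge.
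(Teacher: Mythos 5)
Your overall skeleton (replace $v$ by $v-\vep$, form $\max(u,v-\vep)$, use plurifine locality to split its Monge--Amp\`ere mass, then compare total masses and let $\vep\to0$) is the standard comparison-principle scheme, and the first display and the final limiting step are fine. The genuine gap is the ``total-mass identity'' and the Stokes argument you use for it. The set $\{u<v-\vep\}$ is only guaranteed to stay away from $X\times\partial D$; nothing prevents it from accumulating on $(X\setminus\Amp(\{\theta\}))\times D$, where $u$, $v$ and $w_\vep$ are \emph{unbounded} (they are only bounded relative to $V_\theta$, and $V_\theta$ itself blows up off the ample locus when $\{\theta\}$ is big but not nef). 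Consequently $dd^c(w_\vep-u)\wedge T_\vep$ is a wedge of non-pluripolar products with unbounded potentials over a region that is not relatively compact in the locus where Bedford--Taylor theory applies, and ``classical Stokes on $X\times K$'' is precisely the statement that needs proof, not a tool you can invoke. Your proposed fix does not repair this: since $u,v\geq V_\theta-C$, the truncations $\max(u,V_\theta-k)$, $\max(w_\vep,V_\theta-k)$ stabilise to $u$ and $w_\vep$ themselves for $k\geq C+\vep$, and in any case $\max(\cdot,V_\theta-k)$ is \emph{not} bounded near the non-ample locus, so no reduction to the bounded setting takes place. (The mass identity you want is an analogue of \cite[Theorem 1.16]{BEGZ10} on a manifold with boundary; it is plausible, but it requires an adaptation of the integration-by-parts machinery of \cite[Theorem 1.14]{BEGZ10}, which is exactly the nontrivial point.)

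The paper's proof sidesteps this by localizing \emph{in the space direction as well}: it picks $\psi\in\psh(X,\theta)$ with analytic singularities, $\{\psi=-\infty\}=X\setminus\Amp(\{\theta\})$ and $\theta+dd^c\lambda\psi\geq0$ for $\lambda>1$ close to $1$, and compares $u$ with $v_\vep:=(1-\delta)v+\delta\lambda\psi-\vep$ rather than with $v-\vep$. Because $v_\vep\to-\infty$ at the non-ample locus, the set $\{u<v_\vep\}$ is relatively compact in $\Amp(\{\theta\})\times D$, so $\max(u,v_\vep)$ agrees with $u$ outside a compact subset $\Omega'\times K$ of the good region; the mass identity then follows from an elementary integration by parts against a test function (all potentials are locally bounded there), and the harmless factor $(1-\delta)^n$ is removed by letting $\delta\to0$ at the end. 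If you want to keep your route, you must either prove the boundary-version of the BEGZ integration-by-parts/mass-equality theorem, or incorporate a perturbation of the above type so that the comparison set is compactly contained in $\Amp(\{\theta\})\times D$.
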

\begin{proof}
Fix $\vep>0$, $\delta>0$. As $\theta$ is big we can find $\psi\in \psh(X,\theta)$, $\sup_X \psi=0$, with analytic singularities such that $X\setminus \Amp(\{\theta\})= \{\psi=-\infty\}$ and $\psi \leq u_s,v_s, \ s \in D$, such that $\theta+dd^c \psi$ dominates a K\"ahler form. 
Consider 
        \[
        u_{\vep}:=\max(u,v_{\vep}), \ v_{\vep}:=(1-\delta)v+\delta\lambda \psi-\vep,
        \] 
for some constant $\lambda>1$. If $\lambda-1$ is small enough we have 
\[
\theta + dd^c \lambda \psi = \lambda (\theta+dd^c \psi) + (1-\lambda)\theta \geq \lambda\omega+(1-\lambda)\theta\geq 0,
\]
where $\omega$ is a K\"ahler form on $X$. Thus $v_{\vep}$ is $\pi^*\theta$-psh on $X\times D$. 
Observe that for some open relatively compact $\Omega'\Subset \Amp(\{\theta\})$ ($\Omega'$ depends on $\lambda$), $K\Subset D$, we have $u_{\vep}\equiv u$ in a neighbourhood containing $(X\setminus \Omega')\times (D\setminus K)$. It follows that $ \int_Y(\pi^*\theta +dd^c u)^{n+1}=\int_Y (\pi^*\theta +dd^c u_{\vep})^{n+1}$, where $Y:=\Omega'\times K$. Indeed, for any test function $0\leq \chi\in \Cc^{\infty}(Y)$ which is identically $1$ in an open neighborhood $U$ inside $Y$ such that $\{u<u_{\vep}\}\subset U$ we have 
\[
\int_Y \chi [ (\pi^*\theta +dd^c u)^{n+1}- (\pi^*\theta +dd^c u_\vep)^{n+1}] = \int_Y (u-u_{\vep})dd^c \chi \wedge T =0,
\]
where $T$ is a positive $(n,n)$-current. 
 
 Recall that for locally bounded $\pi^*\theta$-psh functions $\varphi,\psi$, the maximum principle for the complex Monge-Amp\`ere operator yields 
        \begin{equation*}
                {\bf 1}_{\{\varphi<\psi\}} (\pi^*\theta +dd^c \max(\varphi,\psi))^{n+1} ={\bf 1}_{\{\varphi<\psi\}} (\pi^*\theta +dd^c \psi)^{n+1}.
        \end{equation*}
Using the above facts we can write
        \begin{eqnarray*}
        \int_{\{u<v_{\vep}\}\cap \Amp(\{\theta\})\times D} (\pi^*\theta +dd^c v_{\vep})^{n+1} &=& \int_{\{u<v_{\vep}\}\cap \Amp(\{\theta\})\times D} (\pi^*\theta +dd^c u_{\vep})^{n+1}\\
        &=& \int_{Y} (\pi^*\theta +dd^c u_{\vep})^{n+1}  - \int_{\{u\geq v_{\vep}\}\cap Y}  (\pi^*\theta +dd^c u_{\vep})^{n+1}\\
        &\leq & \int_{Y} (\pi^*\theta +dd^c u)^{n+1}  - \int_{\{u > v_{\vep}\}\cap Y}  (\pi^*\theta +dd^c u_{\vep})^{n+1}\\
        &=& \int_{Y} (\pi^*\theta +dd^c u)^{n+1} - \int_{\{u > v_{\vep}\}\cap Y}  (\pi^*\theta +dd^c u)^{n+1}\\
        &\leq & \int_{\{u < (1-\delta)v+\delta\lambda  \psi -\varepsilon\}\cap \Amp(\{\theta\}) \times D}  (\pi^*\theta +dd^c u)^{n+1}.
        \end{eqnarray*}
The left-hand side in the above estimate can be further estimated in the following way: 
\[
\int_{\{u<v_{\vep}\}\cap \Amp(\{\theta\})\times D} (\pi^*\theta +dd^c v_{\vep})^{n+1}  \geq (1-\delta)^n \int_{\{u<v_{\vep}\}\cap  \Amp(\{\theta\})\times D} (\pi^*\theta +dd^c v)^{n+1}. 
\]
Letting $\delta,\vep\to 0$, the proof is finished.
\end{proof}

\begin{prop}
        \label{prop: uniqueness}
        Let $\varphi_0,\varphi_1\in \psh(X,\theta)$ with minimal singularities and 
        $[0,1]\ni t \to u_t$ be a subgeodesic of 
        $\varphi_0,\varphi_1$ with minimal singularities 
        satisfying \eqref{eq: boundary limit of weak geodesic} and $u_{0,1} = \varphi_{0,1}$. 
        If the complexification satisfies $(\pi^*\theta +dd^c U)^{n+1}=0$ on
        $\Amp(\{\theta\})\times D$ then $t \to u_t$ is the weak geodesic connecting $\varphi_0$ to $\varphi_1$. 
\end{prop}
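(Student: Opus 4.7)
By construction of the envelope, $u_t\le\varphi_t$ for all $t$ since $u_t$ is itself a subgeodesic, so it suffices to show that every subgeodesic $t\mapsto v_t$ of $\varphi_0,\varphi_1$ satisfies $v_t\le u_t$; equivalently, for the complexifications, $V\le U$ on $X\times D$. The plan is to perturb $V$ by a boundary-vanishing, strictly plurisubharmonic barrier, apply Proposition \ref{prop: comparison principle}, and exploit the hypothesis $(\pi^*\theta+dd^c U)^{n+1}=0$ on $\Amp(\{\theta\})\times D$. To satisfy the uniform minimal-singularity assumption of that proposition, we first replace $V$ by $V^N:=\max(V,V_\theta-N)$ for $N$ sufficiently large; this is still a subgeodesic of $\varphi_0,\varphi_1$, and after relabeling we assume $V$ already enjoys uniformly minimal singularities on $X\times D$.

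Set $\eta(r):=(r-1)(r-e)$. A direct polar computation (the radial Laplacian $r\eta''(r)+\eta'(r)=4r-(1+e)$ is positive on $[1,e]$) shows that $\eta(|z|)$ is strictly plurisubharmonic on $D$, vanishes on $\partial D$, and is strictly negative inside. Put $V^\varepsilon(x,z):=V(x,z)+\varepsilon\eta(|z|)$; this is $\pi^*\theta$-psh and shares the boundary traces of $V$, so $\liminf_{(x,z)\to X\times\partial D}(U-V^\varepsilon)\ge 0$. Proposition \ref{prop: comparison principle} yields
\[
0=\int_{\{U<V^\varepsilon\}\cap\Amp(\{\theta\})\times D}(\pi^*\theta+dd^c U)^{n+1}\ge\int_{\{U<V^\varepsilon\}}(\pi^*\theta+dd^c V^\varepsilon)^{n+1}.
\]
Since $D$ is one-dimensional, $(dd^c\eta)^2=0$, so the binomial expansion collapses to
\[
(\pi^*\theta+dd^c V^\varepsilon)^{n+1}=(\pi^*\theta+dd^c V)^{n+1}+(n+1)\varepsilon\, dd^c\eta\wedge(\pi^*\theta+dd^c V)^n,
\]
and both nonnegative terms must vanish on $\{U<V^\varepsilon\}$.

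Write $\tau:=\log|z|$ and decompose $\pi^*\theta+dd^c V=A+B+C$ with $A=\theta+dd^c_xV_\tau$, $B$ the mixed part linear in $d_x\dot V_\tau\wedge d^c\tau$, and $C=\ddot V_\tau\, d\tau\wedge d^c\tau$. Any monomial of $(A+B+C)^n$ involving $B$ or $C$ carries at least one $d\tau$ or $d^c\tau$; these annihilate against the additional $d\tau\wedge d^c\tau$ hidden in $dd^c\eta=c(z)\,d\tau\wedge d^c\tau$, with $c>0$. Consequently
\[
dd^c\eta\wedge(\pi^*\theta+dd^c V)^n=c(z)\, d\tau\wedge d^c\tau\wedge\theta_{V_\tau}^n,
\]
and Fubini in $\tau$ gives $\theta_{V_\tau}^n(\{U_\tau<V_\tau+\varepsilon\eta(e^\tau)\})=0$ for a.e.\ $\tau\in(0,1)$. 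Letting $\varepsilon\to 0^+$ with monotone convergence (noting $\eta(e^\tau)<0$), we conclude $\theta_{V_\tau}^n(\{V_\tau>U_\tau\})=0$ for a.e.\ $\tau$.

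To upgrade this to a pointwise inequality on each slice, set $\mu_\tau:=\max(V_\tau,U_\tau)$. Monotonicity of non-pluripolar Monge-Amp\`ere mass (applied to $\mu_\tau\ge U_\tau\in\Ec(X,\theta)$) gives $\mu_\tau\in\Ec(X,\theta)$, and plurifine locality of the non-pluripolar product on $\{V_\tau>U_\tau\}$ yields
\[
\mathbf 1_{\{\mu_\tau>U_\tau\}}\theta_{\mu_\tau}^n=\mathbf 1_{\{V_\tau>U_\tau\}}\theta_{V_\tau}^n=0,
\]
so Proposition \ref{prop: domination principle} forces $\mu_\tau\le U_\tau$ everywhere, and hence $V_\tau\le U_\tau$ on $X$ for $\tau$ in a full-measure subset of $(0,1)$. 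Finally, for any $(x_0,z_0)\in X\times D$, the function $z\mapsto V(x_0,z)$ is subharmonic on $D$ (the restriction of $\pi^*\theta$ to $\{x_0\}\times D$ vanishes), so by the sub-mean-value inequality and continuity of $U$ in $z$,
\[
V(x_0,z_0)\le \lim_{r\to 0^+}\frac{1}{|B(z_0,r)|}\int_{B(z_0,r)}V(x_0,z)\, d\lambda(z)\le U(x_0,z_0),
\]
since $V(x_0,z)\le U(x_0,z)$ on a full-measure subset of $D$. Therefore $V\le U$ on $X\times D$, completing the proof. The main technical obstacle is the fibered Monge-Amp\`ere identity above, which requires careful bookkeeping of the mixed and time-time terms in the decomposition of $dd^c V$; the uniform minimal-singularity hypothesis is a secondary subtlety handled by the initial truncation $V^N$.
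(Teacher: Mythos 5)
Your strategy (perturb the competitor, apply Proposition \ref{prop: comparison principle}, exploit $(\pi^*\theta+dd^cU)^{n+1}=0$) starts out parallel to the paper's, but your choice of perturbation creates a genuine gap at the end. Because you only add $\vep\,\eta(|z|)$, which is strictly psh in the $z$-direction alone, the surviving term $dd^c\eta\wedge(\pi^*\theta+dd^cV)^n$ carries no positivity in the $x$-directions, and all you can extract is the slicewise statement $\theta_{V_\tau}^n(\{V_\tau>U_\tau\})=0$ for a.e.\ $\tau$. This is negligibility with respect to the Monge--Amp\`ere measure of the function you are trying to prove is \emph{smaller}; Proposition \ref{prop: domination principle} requires negligibility with respect to $\theta_{U_\tau}^n$, the measure of the function that should dominate. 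The inference you make in the last paragraph --- ``$\mu\ge u$, both of full mass, $\theta_{\mu}^n(\{\mu>u\})=0$ implies $\mu\le u$'' --- is false in general. On $\mathbb{P}^1$ with the Fubini--Study form, let $v$ solve $\omega+dd^cv=\mu_K$, the normalized arclength measure on the unit circle $K$ (so $v$ is continuous and $\omega_v$ is supported on $K$), and set $u:=v+\vep w$ where $w=|z|^2-1$ on the closed unit disc and $w=0$ outside; for small $\vep>0$ one checks $u\in\psh(\mathbb{P}^1,\omega)$ is bounded, $u\le v$ with strict inequality exactly on the open disc, while $\omega_v(\{v>u\})=0$ since $\omega_v$ lives on $K$. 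Here $v=\max(u,v)$, both potentials are bounded (minimal singularities) and of full mass, yet $v\not\le u$. So your final step cannot be repaired by bookkeeping; the information produced by the $\vep\eta$ perturbation is simply too weak. (Two secondary issues: the fibered identity $dd^c\eta\wedge(\pi^*\theta+dd^cV)^n=c(z)\,d\tau\wedge d^c\tau\wedge\theta_{V_\tau}^n$ is asserted via a formal decomposition in $\dot V_\tau,\ddot V_\tau$ that does not exist for a merely bounded psh $V$ and would need an approximation argument; and the boundary condition $\liminf(U-V^\vep)\ge0$ needs a word, since a general subgeodesic need not attain its boundary data --- it follows from $t$-convexity of $V$ together with the Lipschitz bound \eqref{eq: boundary limit of weak geodesic} for $U$, not from ``shared boundary traces''.)

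The paper avoids all of this by building positivity into the $x$-directions as well: it compares $U$ with $\Phi_\vep:=(1-\vep)\Phi+\vep(\rho+\tau)$, where $\Phi$ is the geodesic envelope, $\rho$ is a boundary-vanishing potential of a positive form on $D$, and $\tau\in\psh(X,\theta)$ has minimal singularities with $(\theta+dd^c\tau)^n=c\,\omega^n$ (via \cite[Theorem 4.1]{BEGZ10}). Then $(\pi^*\theta+dd^c\Phi_\vep)^{n+1}\ge\vep^n c\,\omega^n\wedge dd^c\rho>0$ on $\Amp(\{\theta\})\times D$, so the comparison principle forces $\{U<\Phi_\vep\}\cap\Amp(\{\theta\})\times D$ to have zero Lebesgue measure, hence $U\ge\Phi_\vep$ everywhere, and one concludes by letting $\vep\to0$ --- no slicing and no domination principle are needed. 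If you want to keep your scheme, replace $V+\vep\eta$ by a convex combination of $V$ with such a strictly positive ``barrier'' so that the bad set is seen by a fixed volume form rather than only by the slice measures $\theta_{V_\tau}^n$.
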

\begin{proof}
        Let $t \to \varphi_t$ be the weak geodesic connecting $\varphi_0,\varphi_1$. It suffices to show that $\Phi \leq U$. Fix $\rho$  a smooth function in $D$ such that $\rho=0$ on the boundary and $dd^c \rho =\sqrt{-1}dz\wedge d\bar{z}$. Fix also $\tau \in \psh(X,\theta)$ with minimal singularities such that $(\theta +dd^c \tau)^n=c\omega^n$, for some positive normalization constant $c$ and some fixed K\"ahler form $\omega$ on $X$. Observe that such a potential exists thanks to \cite[Theorem 4.1]{BEGZ10}. We normalize $\tau$ so that $\tau\leq \min(\varphi_0,\varphi_1)$. Applying the comparison principle (Proposition \ref{prop: comparison principle}) for $\Phi_{\vep}:=(1-\vep)\Phi+\vep(\rho+\tau)$ and $U$ we get
        \[
        \int_{\{U<\Phi_{\vep}\}\cap \Amp(\{\theta\}) \times D} ( \pi^*\theta + dd^c\Phi_{\vep})^{n+1} \leq \int_{\{U<\Phi_{\vep}\}\cap \Amp(\{\theta\}) \times D} ( \pi^*\theta + dd^cU)^{n+1}=0.
        \]
By expanding $( \pi^*\theta + dd^c\Phi_{\vep})^{n+1}\geq \vep^n(\theta +dd^c \tau)^n\wedge dd^c \rho =\vep^n c \omega^n \wedge dd^c \rho>0$, the inequality above gives that $U\geq \Phi_{\vep}$ almost everywhere, hence everywhere in $X\times D$. Letting $\vep \to 0$ we get the desired result. 
\end{proof}

In the K\"ahler case, geodesics joining smooth potentials are $C^{1,\bar{1}}$-smooth and the Monge--Amp\`ere energy is linear along these geodesics.  
When $\theta$ is also nef,  by adapting the proof of Chen \cite{Che00} (see \cite{Bou12}), we expect that a similar regularity result still holds in the ample locus of $\{\theta\}$. 

Recall that for potentials with minimal singularities the Monge--Amp\`ere energy is defined by the following expression:
\[
\AM(u) := \frac{1}{(n+1)\vol(\theta)} \sum_{j=0}^n \int_X (u-V_{\theta})\theta_u^j\wedge \theta_{V_\theta}^{n-j}.
\]
We proceed now to show that, as perhaps expected, the Monge--Amp\`ere energy is linear/convex along geodesics/subgeodesics with minimal singularities. However to argue this, we will need to use a careful mollification argument for subgeodesics in the time variable that will be detailed in the next subsection.  

\subsection{Approximation of subgeodesics}\label{sect: approx. of sub-geo}
Unless specified otherwise, assume for this subsection that $(0,1) \ni t \to \varphi_t \in \psh(X,\theta)$ is a subgeodesic with minimal singularities such that $|\varphi_t-\varphi_{t'}|\leq C|t-t'|$, for some positive constant $C$. Consider a smoothing kernel $\chi: \RR \rightarrow [0,1]$ supported in $[-1,1]$ with $\int_{\RR}\chi(t)dt=1$ and $\chi(t)=\chi(-t)$. We then set $\chi_{\vep}(t):= \frac{1}{\vep} \chi(t/\vep)$, so that the support of $\chi_{\vep}$ is $[-\vep,\vep]$ and $\int_{\RR} \chi_{\vep}(t)dt=1$. For each $\vep>0$ we consider 
\begin{equation}\label{eq: moll_approx_def}
\varphi_{\vep,t}(x):= \int_{0}^1 \varphi_{s}(x) \chi_{\vep}(t-s) ds,
\end{equation}
which is well defined for $t\in (\vep, 1-\vep)$. 

\begin{lemma}
For each $\vep>0$, $(\vep,1-\vep) \ni t \to \varphi_{\vep,t} \in \psh(X,\theta)$ is a $t$-Lipschitz subgeodesic with minimal singularities.      
\end{lemma}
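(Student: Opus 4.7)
The plan is to verify, directly from the integral definition \eqref{eq: moll_approx_def}, the three conditions in the definition of a subgeodesic with minimal singularities (adapted to the open interval $(\vep,1-\vep)$), together with $t$-Lipschitz continuity. The key conceptual ingredient is that the complexification $\Phi(x,z):=\varphi_{\log|z|}(x)$ is $\pi^*\theta$-psh on $X\times D$ by hypothesis, and that the fiberwise rotations/dilations $T_u:(x,z)\mapsto (x,ze^{-u})$ of $X\times \CC^*$ satisfy $T_u^*\pi^*\theta=\pi^*\theta$, since $\theta$ is pulled back from $X$ and $T_u$ fixes the $X$-coordinates. This makes $\Phi\circ T_u$ still $\pi^*\theta$-psh wherever defined, and the mollification amounts to averaging over such shifts.

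The routine parts come first. From the Lipschitz bound $|\varphi_t-\varphi_{t'}|\leq C|t-t'|$ applied at any fixed $t_0\in(0,1)$ (where $\varphi_{t_0}$ has minimal singularities), one obtains a uniform two-sided estimate $|\varphi_t-V_\theta|\leq M$ on $(0,1)$. Since $\chi_\vep\geq 0$ has total mass $1$ and its support lies in $(t-\vep,t+\vep)\subset(0,1)$ for $t\in(\vep,1-\vep)$, integrating preserves this bound: $|\varphi_{\vep,t}-V_\theta|\leq M$, so each $\varphi_{\vep,t}$ has minimal singularities, and is in particular $\theta$-psh as a positive integral of $\theta$-psh functions. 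The $t$-Lipschitz estimate follows from the change of variable $s=t-u$,
\[
\varphi_{\vep,t}(x)=\int_{-\vep}^{\vep}\varphi_{t-u}(x)\,\chi_\vep(u)\,du,
\]
which gives $|\varphi_{\vep,t}-\varphi_{\vep,t'}|\leq \int_{-\vep}^{\vep}|\varphi_{t-u}-\varphi_{t'-u}|\chi_\vep(u)\,du\leq C|t-t'|$.

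The main (and only non-routine) step is to show that the complexification is $\pi^*\theta$-psh on $X\times D_\vep$, where $D_\vep:=\{e^\vep<|z|<e^{1-\vep}\}$. Using the same change of variable we obtain
\[
\varphi_{\vep,\log|z|}(x)=\int_{-\vep}^{\vep}\varphi_{\log|ze^{-u}|}(x)\,\chi_\vep(u)\,du=\int_{-\vep}^{\vep}(\Phi\circ T_u)(x,z)\,\chi_\vep(u)\,du.
\]
For $|u|<\vep$ and $z\in D_\vep$ one has $|ze^{-u}|\in(1,e)$, so each $\Phi\circ T_u$ is defined on $X\times D_\vep$, and by the invariance $T_u^*\pi^*\theta=\pi^*\theta$ noted above it is $\pi^*\theta$-psh there. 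A positive integral of $\pi^*\theta$-psh functions (with a smooth compactly supported density in $u$) is again $\pi^*\theta$-psh, which gives condition (iii). The only point to keep track of is upper semicontinuity in $(x,z)$ of the resulting integral; this is automatic from the uniform lower bound $\Phi\circ T_u\geq V_\theta-M$ together with Fatou, allowing one to invoke the integral-of-psh-is-psh principle without subtleties.
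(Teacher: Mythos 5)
Your argument is correct and essentially coincides with the paper's, which simply declares the subgeodesic property trivial and deduces minimal singularities and the $t$-Lipschitz bound directly from the convolution formula; your averaging of $\Phi$ over the fiber-preserving maps $T_u$ (which satisfy $T_u^*\pi^*\theta=\pi^*\theta$) is exactly the intended justification of that "trivial" step. The one slip is your final remark: upper semicontinuity of the averaged potential comes from the uniform \emph{upper} bound $\Phi\circ T_u\leq V_\theta+M\leq M$ via the reverse Fatou lemma (or, alternatively, from the equi-Lipschitz dependence on $u$, which makes the integral a uniform limit of finite convex combinations of $\pi^*\theta$-psh functions), not from the lower bound---Fatou applied with a lower bound only yields lower semicontinuity of the integral.
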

\begin{proof}
        The fact that $t \to \varphi_{\vep,t}$ is a subgeodesic is trivial. Because $t \to \varphi_t$ is $t$-Lipschitz, it follows that $\varphi_{\vep,t}$ has minimial singularities for all $t \in [\vep,1-\vep]$. Working directly with \eqref{eq: moll_approx_def} we obtain $|\varphi_{\vep,t}-\varphi_{\vep,t'}|\leq C|t-t'|$.
\end{proof}
The next observation is simple but will be crucial for our approximation procedure. 

\begin{lemma}
        \label{lem: difference of quasi-psh}
        There exists $A_{\vep},B_{\vep}>0$ (that may blow up as $\vep \to 0$) such that $\dot{\varphi}_{\vep,t}/A_{\vep}$, $\ddot{\varphi}_{\vep,t}/B_{\vep}$ can be written as a difference of $\theta$-psh functions with minimal singularities $t \in (\vep,1-\vep)$. 
\end{lemma}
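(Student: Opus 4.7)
The plan is to differentiate under the integral sign in \eqref{eq: moll_approx_def} and then decompose the resulting integrals using the Hahn/Jordan decomposition of $\chi'_\vep$ and $\chi''_\vep$ into positive and negative parts.

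First, since each $\varphi_s$ has minimal singularities with a Lipschitz bound in $s$, there exists $M=M(\varphi_0,\varphi_1)>0$ such that $V_\theta - M \leq \varphi_s \leq V_\theta + M$ for all $s \in [0,1]$. In particular $\varphi_s$ is uniformly bounded away from $+\infty$ and locally bounded on $\Amp(\{\theta\})$, so one can differentiate $\varphi_{\vep,t}$ in $t$ past the integral sign to obtain
\[
\dot\varphi_{\vep,t}(x) = \int_0^1 \varphi_s(x)\,\chi'_\vep(t-s)\,ds, \qquad \ddot\varphi_{\vep,t}(x) = \int_0^1 \varphi_s(x)\,\chi''_\vep(t-s)\,ds.
\]
Decompose $\chi'_\vep = \chi'_{\vep,+} - \chi'_{\vep,-}$ and $\chi''_\vep = \chi''_{\vep,+} - \chi''_{\vep,-}$ into their nonnegative and nonpositive parts. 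Note that since $\chi$ is even and smooth, $\int_\RR \chi'_\vep = 0$ and $\int_\RR \chi''_\vep = 0$, so $\|\chi'_{\vep,+}\|_{L^1} = \|\chi'_{\vep,-}\|_{L^1}$ and $\|\chi''_{\vep,+}\|_{L^1} = \|\chi''_{\vep,-}\|_{L^1}$; call these common values $A_\vep$ and $B_\vep$ respectively.

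Next, for $t \in (\vep, 1-\vep)$, the support of $s \mapsto \chi'_\vep(t-s)$ lies inside $[0,1]$, so
\[
\int_0^1 \chi'_{\vep,\pm}(t-s)\,ds = \int_\RR \chi'_{\vep,\pm}(u)\,du = A_\vep,
\]
and analogously for $\chi''_{\vep,\pm}$ with $B_\vep$. Therefore the probability measures $d\mu^{\pm}_{\vep,t}(s) := A_\vep^{-1}\chi'_{\vep,\pm}(t-s)\,ds$ on $[0,1]$ allow us to set
\[
u^{\pm}_{\vep,t}(x) := \int_0^1 \varphi_s(x)\,d\mu^{\pm}_{\vep,t}(s), \qquad v^{\pm}_{\vep,t}(x) := B_\vep^{-1}\int_0^1 \varphi_s(x)\,\chi''_{\vep,\pm}(t-s)\,ds,
\]
so that $\dot\varphi_{\vep,t}/A_\vep = u^+_{\vep,t} - u^-_{\vep,t}$ and $\ddot\varphi_{\vep,t}/B_\vep = v^+_{\vep,t} - v^-_{\vep,t}$. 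Each of these functions is a superposition of $\theta$-psh functions against a probability measure, hence $\theta$-psh (the $\pi^*\theta$-plurisubharmonicity of the complexification $U(x,z) = \varphi_{\log|z|}(x)$ makes this superposition argument standard). The sandwich $V_\theta - M \leq \varphi_s \leq V_\theta + M$ transfers to each $u^{\pm}_{\vep,t}, v^{\pm}_{\vep,t}$, giving minimal singularities.

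The only mild subtlety is ensuring that the superposition integrals truly define $\theta$-psh functions rather than almost-everywhere equivalence classes; this is handled by the uniform bound $V_\theta - M \leq \varphi_s \leq V_\theta + M$, which lets us appeal to Fubini to conclude $u^\pm_{\vep,t}, v^\pm_{\vep,t} \in L^\infty_{\loc}(\Amp(\{\theta\}))$, and then to the usual fact that a pointwise superposition of $\theta$-psh functions against a probability measure is $\theta$-psh once $L^1_{\loc}$ is established. I do not anticipate any real obstacle; the content of the lemma is essentially bookkeeping around the mollifier.
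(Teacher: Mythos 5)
Your proposal is correct and follows essentially the same route as the paper: decompose $\chi'_\vep$ and $\chi''_\vep$ into their positive and negative parts, convolve $\varphi_s$ against each part, and normalize by the common $L^1$ mass (the paper's $A_\vep=\int\rho^+_\vep=\int\rho^-_\vep$, using $\int\chi'_\vep=0$), so that each piece is an average of $\theta$-psh functions and hence $\theta$-psh. Your sandwich bound $V_\theta-M\le\varphi_s\le V_\theta+M$ for minimal singularities is just a repackaging of the paper's use of the uniform $t$-Lipschitz estimate, so the two arguments are the same in substance.
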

\begin{proof}
        Write $\chi_{\vep}'(t) = \rho^+_{\vep}(t)-\rho^-_{\vep}(t)$ where $\rho^+_{\vep},\rho^-_{\vep}$ are the positive and negative parts of $\chi_{\vep}'(t)$. They are clearly bounded (but the bound blows up as $\vep\to 0$). Now, let 
        \[
        u^{\pm}_{\vep,t}(x):= \int_0^1 \varphi_s(x)\rho^{\pm}_{\vep}(t-s) ds.
        \]
        It follows that 
        \[
        dd^c u^{\pm}_{\vep,t} =  \int_0^1 (dd^c\varphi_s) \rho^{\pm}_{\vep}(t-s) ds\geq -A_{\vep} \theta,
        \]
where $A_{\vep}= \int_{-\vep}^{\vep} \rho^+_{\vep}=\int_{-\vep}^{\vep} \rho^-_{\vep}$.
Observe indeed that $\int_{-\vep}^\vep \rho^+_{\vep}- \rho^-_{\vep} =\int_{-\vep}^\vep \chi'_\vep= 0$. 
To see that $u_{\vep,t}^{\pm}/A_\vep$ has minimal singularities we note that $|\varphi_{t}-\varphi_{t'}|\leq C|t-t'|$, hence 
        \[
        u^{\pm}_{\vep,t}(x):= \int_{-\vep}^\vep \varphi_{t-s}(x)\rho^{\pm}_{\vep}(s) ds \leq \int_{-\vep}^\vep \varphi_{t'-s}(x)\rho^{\pm}_{\vep}(s) ds + CA_{\vep}|t-t'|.
        \]
For $\ddot \varphi_{\vep,t}$, a similar argument works with the choice  $B_{\vep}:=\int_{-\vep}^\vep\max(\chi''_\vep(t),0)dt$.
\end{proof}
\begin{lemma}\label{lem: AM is differentiable along approximants}
        The Monge-Amp\`ere energy is twice differentiable along $t \to \varphi_{\vep,t}$ and the derivatives are given by
 \begin{equation}\label{eq: second derivative AM big 1}
\frac{d}{dt} \AM(\varphi_{\vep,t}) = \int_X \dot{\varphi}_{\vep,t} \theta_{\varphi_{\vep,t}}^n, \       \frac{d^2}{dt^2} \AM(\varphi_{\vep,t}) = \int_X \ddot{\varphi}_{\vep,t} \theta_{\varphi_{\vep,t}}^n - n\int_X d\dot{\varphi}_{\vep,t}\wedge d^c \dot{\varphi}_{\vep,t}\wedge \theta_{\varphi_t}^{n-1}.
        \end{equation}
\end{lemma}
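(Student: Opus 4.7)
My plan is to use the cocycle identity for the Monge-Amp\`ere energy: for $u,v\in \psh(X,\theta)$ with minimal singularities,
\[
\AM(u)-\AM(v) = \frac{1}{(n+1)\vol(\{\theta\})}\sum_{j=0}^{n}\int_X (u-v)\,\theta_u^j \wedge \theta_v^{n-j},
\]
which follows from expanding the definition of $\AM$ and integrating by parts on the ample locus (this is a standard consequence of the theory in \cite{BEGZ10}). I would apply this with $u = \varphi_{\vep,t+h}$, $v = \varphi_{\vep,t}$, divide by $h$, and let $h\to 0$. Because $\chi_\vep$ is smooth, the difference quotient $(\varphi_{\vep,t+h}-\varphi_{\vep,t})/h$ converges to $\dot\varphi_{\vep,t}$ \emph{uniformly} on $X$, and $\varphi_{\vep,t+h}\to \varphi_{\vep,t}$ uniformly. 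By \cite[Theorem 2.17]{BEGZ10} each mixed non-pluripolar product $\theta_{\varphi_{\vep,t+h}}^j\wedge\theta_{\varphi_{\vep,t}}^{n-j}$ then converges weakly on $\Amp(\{\theta\})$ to $\theta_{\varphi_{\vep,t}}^n$, with uniformly bounded total mass. Pairing uniform convergence of the integrands with weak convergence of the measures yields the first-derivative formula in \eqref{eq: second derivative AM big 1}.

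For the second derivative I would differentiate $t\mapsto \int_X \dot\varphi_{\vep,t}\theta_{\varphi_{\vep,t}}^n$ by means of the splitting
\begin{align*}
\int_X \dot\varphi_{\vep,t+h}\theta_{\varphi_{\vep,t+h}}^n - \int_X \dot\varphi_{\vep,t}\theta_{\varphi_{\vep,t}}^n
&= \int_X (\dot\varphi_{\vep,t+h}-\dot\varphi_{\vep,t})\theta_{\varphi_{\vep,t+h}}^n \\
&\quad + \int_X \dot\varphi_{\vep,t}\bigl(\theta_{\varphi_{\vep,t+h}}^n - \theta_{\varphi_{\vep,t}}^n\bigr).
\end{align*}
Divided by $h$, the first summand tends to $\int_X \ddot\varphi_{\vep,t}\theta_{\varphi_{\vep,t}}^n$ by the same mechanism as above, using that $(\dot\varphi_{\vep,t+h}-\dot\varphi_{\vep,t})/h\to \ddot\varphi_{\vep,t}$ uniformly. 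For the second summand I would expand the difference of Monge-Amp\`ere products as
\[
\theta_{\varphi_{\vep,t+h}}^n - \theta_{\varphi_{\vep,t}}^n \,=\, dd^c(\varphi_{\vep,t+h}-\varphi_{\vep,t})\wedge T_h,\qquad T_h := \sum_{j=0}^{n-1}\theta_{\varphi_{\vep,t+h}}^j\wedge\theta_{\varphi_{\vep,t}}^{n-1-j},
\]
integrate by parts to transfer $dd^c$ onto $\dot\varphi_{\vep,t}$, divide by $h$, and let $h\to 0$; the limit is $n\int_X \dot\varphi_{\vep,t}\,dd^c\dot\varphi_{\vep,t}\wedge\theta_{\varphi_{\vep,t}}^{n-1}$. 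A further Stokes-type integration by parts rewrites this as $-n\int_X d\dot\varphi_{\vep,t}\wedge d^c\dot\varphi_{\vep,t}\wedge\theta_{\varphi_{\vep,t}}^{n-1}$, producing the second formula in \eqref{eq: second derivative AM big 1}.

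The main obstacle will be justifying these two integrations by parts, since $\dot\varphi_{\vep,t}$ is \emph{not} $\theta$-psh. This is precisely where Lemma~\ref{lem: difference of quasi-psh} is essential: it allows me to write $\dot\varphi_{\vep,t}/A_\vep$ as a difference of two $\theta$-psh potentials with minimal singularities, each of which is bounded on $\Amp(\{\theta\})$. Splitting $\dot\varphi_{\vep,t}$ accordingly reduces every integration by parts to the standard one for bounded minimal singularity potentials over the ample locus, as developed in \cite{BEGZ10}; the complement is pluripolar and therefore invisible to the non-pluripolar products involved, so no boundary contribution arises. To complete the limiting argument I would further invoke Corollary~\ref{cor: convergence big} to guarantee that no mass of the mixed products $T_h$ escapes to $X\setminus \Amp(\{\theta\})$ as $h\to 0$, which together with the uniform convergences above closes the proof.
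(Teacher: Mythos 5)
Your proposal is correct and is essentially the paper's own argument: the same time--difference quotients, the same integration by parts over the ample locus justified through Lemma \ref{lem: difference of quasi-psh} (note that $dd^c \dot{\varphi}_{\vep,t}=A_\vep(\theta_{u^+}-\theta_{u^-})$, so every current you manipulate is a difference of mixed non-pluripolar products of minimal-singularity potentials, and the pluripolar complement of $\Amp(\{\theta\})$ carries none of their mass), with limits handled by Corollary \ref{cor: convergence big}. The only cosmetic differences are that you start from the cocycle identity \eqref{eq: AMcocycle} where the paper uses the two-sided concavity estimate for $\AM$, and that your splitting of the second difference quotient is the mirror image of the paper's (you transfer $dd^c$ onto the fixed $\dot{\varphi}_{\vep,t}$, the paper onto $\dot{\varphi}_{\vep,t+s}$, and the paper pairs the $\dot\varphi$-increment with the fixed measure $\theta_{\varphi_{\vep,t}}^n$ so that dominated convergence suffices there). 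One point should be stated more carefully: ``pairing uniform convergence of the integrands with weak convergence of the measures'' is not by itself a valid inference, because the limit integrands $\dot{\varphi}_{\vep,t}$, $\ddot{\varphi}_{\vep,t}$ and the difference quotients are only bounded and quasi-continuous, not continuous; this is precisely what Corollary \ref{cor: convergence big} supplies --- convergence of integrals of uniformly bounded quasi-continuous functions against the converging mixed Monge--Amp\`ere measures, via the capacity comparison of Theorem \ref{thm: comparison of capacity} --- and not merely the non-escape of mass to $X\setminus\Amp(\{\theta\})$ that you attribute to it. (Also, the uniform convergence of the difference quotients uses the $t$-Lipschitz bound $|\varphi_t-\varphi_{t'}|\le C|t-t'|$ together with $\int\chi_\vep'=\int\chi_\vep''=0$, not the smoothness of $\chi_\vep$ alone.) With Corollary \ref{cor: convergence big} invoked at those limit steps, exactly as in the paper, your argument closes.
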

\begin{proof}
        For notational convenience we remove the subscript $\vep$. By basic properties of the $\AM$ functional (see \cite{BEGZ10,BB10}) we have, for $t\in (\vep,1-\vep), s>0$, 
        \[
        \int_X (\varphi_{t+s}-\varphi_t)\theta_{\varphi_{t+s}}^n \leq \AM(\varphi_{t+s})- \AM(\varphi_t) \leq  \int_X (\varphi_{t+s}-\varphi_t) \theta_{\varphi_t}^n.
        \] 
        Dividing by $s>0$ the right-hand side then converges (as $s\to 0$)  to $\int_X \dot{\varphi}_t\theta_{\varphi_t}^n$. The left-hand side can be estimated by 
        \[
        \frac{1}{s}\int_X (\varphi_{t+s}-\varphi_t)\theta_{\varphi_{t+s}}^n\geq \int_X \dot{\varphi}_t \theta_{\varphi_{t+s}}^n. 
        \]
 As $\dot{\varphi}_t$ is bounded and quasi-continuous (with respect to the Monge-Amp\`ere capacity $\capo$), Corollary \ref{cor: convergence big} allows to pass to the limit as $s\to 0$ and the differentiability of $\AM(\varphi_t)$ follows. 
 
 To prove the formula for the second derivative we fix $t\in (\vep,1-\vep)$ and $s>0$ small enough and prove that 
\begin{equation*}
\lim_{s\to 0^+}d(s):=\lim_{s\to 0^+} \frac{1}{s} \left( \int_X \dot{\varphi}_{t+s} \theta_{\varphi_{t+s}}^n - \dot{\varphi}_t \theta_{\varphi_t}^n\right)  
\end{equation*}
 equals to the right-hand side of \eqref{eq: second derivative AM big 1}. The same formula for the left limit will then follow automatically, as we can always "reverse" the time direction of a subgeodesic. Setting $T_s:= \sum_{j=1}^{n-1} \theta_{\varphi_t}^j \wedge \theta_{\varphi_{t+s}}^{n-1-j}$, we can write
\[
d(s)= \frac{1}{s} \left( \int_X \dot{\varphi}_{t+s} dd^c (\varphi_{t+s}-\varphi_t) \wedge T_s + \int_X (\dot{\varphi}_{t+s}-\dot{\varphi}_t)\theta_{\varphi_t}^n\right).
\]
Thanks to Theorem 1.14 in \cite{BEGZ10} we can integrate by parts in the first term and obtain
\begin{equation}
        \label{eq: second derivative AM big 3}
d(s)= \frac{1}{s} \left( \int_X (\varphi_{t+s}-\varphi_t) dd^c\dot{\varphi}_{t+s}  \wedge T_s + \int_X (\dot{\varphi}_{t+s}-\dot{\varphi}_t)\theta_{\varphi_t}^n\right). 
\end{equation}
As $s\to 0$, $(\varphi_{t+s}-\varphi_t)/s$ decreases to $\dot{\varphi}_t$, all of them being quasi-continuous and uniformly bounded. On the other hand, by Lemma \ref{lem: difference of quasi-psh} we can write $\dot{\varphi}_{t+s}/A_{\vep}=u^+_{t+s}-u^{-}_{t+s}$, where  $u^{\pm}_{t+s}$ are $\theta$-psh functions with minimal singularities that converge uniformly to $u^{\pm}_t$ as $s\to 0$. By using Corollary \ref{cor: convergence big}, the first term in the right-hand side of \eqref{eq: second derivative AM big 3} converges to  $n\int_X \dot{\varphi}_t dd^c \dot{\varphi}_t\wedge \theta_{\varphi_t}^{n-1}$. Moreover, using dominated convergence for the second term we obtain
$$\lim_{s\to 0^+} \frac{1}{s}\int_X (\dot{\varphi}_{t+s}-\dot{\varphi}_t)\theta_{\varphi_t}^n = \int_X \ddot{\varphi}_{t}\theta_{\varphi_t}^n.$$
Observe in fact that $\ddot{\varphi}_{t}$ is uniformly bounded from above as it can be written as the difference of two qpsh functions. Finally, an integration by parts gives
\[
\lim_{s\to 0^+}d(s)=-n\int_X d\dot{\varphi}_{t}\wedge d^c \dot{\varphi}_{t}  \wedge \theta_{\varphi_t}^{n-1} + \int_X \ddot{\varphi}_{t}\theta_{\varphi_t}^n. 
\]
\end{proof}

\begin{coro}\label{cor: AM convex approximant}
The Monge-Amp\`ere energy is convex along $t \to \varphi_{\vep,t}$.     
\end{coro}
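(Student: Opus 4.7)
The plan is to combine the second derivative formula established in Lemma \ref{lem: AM is differentiable along approximants} with the positivity of the $(n+1)$-fold Monge--Amp\`ere mass of the complexification $U_\vep(x,z):=\varphi_{\vep,\log|z|}(x)$. By the subgeodesic property, $U_\vep$ is $\pi^*\theta$-psh on $X\times D$, and because each $\varphi_{\vep,t}$ has minimal singularities with uniform Lipschitz control in $t$, $U_\vep$ is uniformly bounded on $\Amp(\{\theta\})\times D$. Consequently the Bedford--Taylor wedge $\Omega_\vep^{n+1}:=(\pi^*\theta+dd^c U_\vep)^{n+1}$ is a well-defined non-negative Borel measure on $\Amp(\{\theta\})\times D$.

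The central step is to establish the local identity of measures
$$\Omega_\vep^{n+1} = c_n\left[\ddot\varphi_{\vep,t}\,\theta_{\varphi_{\vep,t}}^n - n\,d\dot\varphi_{\vep,t}\wedge d^c \dot\varphi_{\vep,t}\wedge \theta_{\varphi_{\vep,t}}^{n-1}\right]\wedge \frac{i\,dz\wedge d\bar z}{|z|^2}$$
on $\Amp(\{\theta\})\times D$ for some positive universal constant $c_n$. This is the standard expansion in Berndtsson's setting: since $U_\vep$ is rotation invariant in the $\arg z$ variable, the mixed $d\eta$-contributions drop out and only the bracketed block-determinant survives. The individual factors on the right-hand side make sense via Bedford--Taylor theory for bounded potentials because, by Lemma \ref{lem: difference of quasi-psh}, the scaled quantities $\dot\varphi_{\vep,t}/A_\vep$ and $\ddot\varphi_{\vep,t}/B_\vep$ are differences of $\theta$-psh functions with minimal singularities, hence are locally bounded on $\Amp(\{\theta\})$.

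Once the identity is in place, positivity of $\Omega_\vep^{n+1}$ forces the bracketed signed measure to be non-negative on $\Amp(\{\theta\})$. Integrating this over $X$, and using that non-pluripolar measures do not charge the pluripolar complement $X\setminus\Amp(\{\theta\})$, yields precisely that
$$\int_X \ddot\varphi_{\vep,t}\,\theta_{\varphi_{\vep,t}}^n \geq n\int_X d\dot\varphi_{\vep,t}\wedge d^c \dot\varphi_{\vep,t}\wedge \theta_{\varphi_{\vep,t}}^{n-1}.$$
Combined with the formula for $\frac{d^2}{dt^2}\AM(\varphi_{\vep,t})$ in Lemma \ref{lem: AM is differentiable along approximants}, this gives $\frac{d^2}{dt^2}\AM(\varphi_{\vep,t})\geq 0$ and hence the desired convexity.

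The main obstacle I anticipate is the rigorous justification of the local identity above when $\dot\varphi_{\vep,t}$ and $\ddot\varphi_{\vep,t}$ are only differences of bounded psh potentials rather than smooth functions. A clean way around this is to further regularize $U_\vep$ in the spatial variable via standard convolution in coordinate charts contained in $\Amp(\{\theta\})$, verify the identity in the smooth case where it reduces to a routine linear-algebra expansion of a block determinant, and then pass to the limit using continuity of the complex Monge--Amp\`ere operator along uniformly bounded monotone (or uniform) sequences, as recorded in Corollary \ref{cor: convergence big}.
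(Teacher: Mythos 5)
Your proposal is correct and follows essentially the same route as the paper: both reduce convexity, via the second-derivative formula of Lemma \ref{lem: AM is differentiable along approximants}, to the positivity of the measure $\ddot\varphi_{\vep,t}\,\theta_{\varphi_{\vep,t}}^n - n\,d\dot\varphi_{\vep,t}\wedge d^c\dot\varphi_{\vep,t}\wedge\theta_{\varphi_{\vep,t}}^{n-1}$ on $\Amp(\{\theta\})$, which is verified by smoothing in the space variable on coordinate balls (where the smooth $S^1$-invariant case is exactly the block-determinant positivity of $(\pi^*\theta+dd^cU)^{n+1}$) and passing to the limit with Bedford--Taylor, using Lemma \ref{lem: difference of quasi-psh} to control $\dot\varphi_{\vep,t}$ and $\ddot\varphi_{\vep,t}$ as differences of bounded quasi-psh functions.
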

\begin{proof}We again drop the $\varepsilon$ subscript and fix $t \in (\vep,1-\vep)$ for the duration of the proof. In view of Lemma \ref{lem: AM is differentiable along approximants} it is enough to prove that 
\begin{equation}\label{eq: proof convex AM big approximant}
-n d\dot{\varphi}_{t}\wedge d^c \dot{\varphi}_{t}  \wedge \theta_{\varphi_t}^{n-1} +  \ddot{\varphi}_{t}\theta_{\varphi_t}^n\geq 0
\end{equation}
in the weak sense of measures in $\Amp(\{\theta\})$. This property is local, hence we can work in relatively compact open subset $K$ of $\Amp(\{\theta\})$, and approximate $\varphi_t(x)$ using a convolution in the space variable $x$. As we show now, since $\dot{\varphi}_t$, $\ddot{\varphi}_t$ are bounded in $K$ and can be written as the difference of two quasi-psh functions with minimal singularities, the convergence of the appropriate approximating measures to the left-hand side in \eqref{eq: proof convex AM big approximant} follows from standard pluripotential theory. Indeed, fix a coordinate ball $B \subset K$. We will show that \eqref{eq: proof convex AM big approximant} holds in $B$ in the weak sense of measures.

 We can assume existence of a smooth local potential $\tau \in C^\infty(B)$ such that $\theta=dd^c \tau$. Let $\tilde \rho_{\delta}(x)$ be a smoothing kernel in $\mathbb{C}^n$ and consider 
\[
\tilde \varphi_t^{\delta}(x):=\int_{B}( \tau(x-\zeta)+\varphi_t(x-\zeta)) \tilde \rho_{\delta}(\zeta) dV(\zeta) - \tau(x).
\] 
 Since the complexification of $\tilde \varphi_t^{\delta}$ is smooth and $\pi^*\theta$-psh in $B_\delta\times D_{\vep}$, it follows that \eqref{eq: proof convex AM big approximant} holds for $\tilde \varphi_t^{\delta}$ and $\tilde \varphi_t^{\delta} \searrow \varphi_t$.

 By Lemma \ref{lem: difference of quasi-psh} we can write $\dot{\varphi}_t:=u_+-u_{-}$, where $u_+,u_-$ are bounded quasi psh functions on $B_\delta$. Then the corresponding smooth quasi psh functions $v^\delta_\pm$, defined by 
$$
v^\delta_{\pm}(x):= \int_0^1\int_{B} ( \tau(x-\zeta)+\varphi_s(x-\zeta)) \tilde \rho_{\delta}(\zeta)\,\rho_\vep^{\pm} (t-s) ds \,dV(\zeta),
$$
converge decreasingly to $u_{\pm}$ and we have $\dot {\tilde \varphi}_t^\delta = v^\delta_{+}-v^\delta_{-}$. A similar thing is true for the second derivatives $\ddot{\varphi}_t,\ddot{\tilde \varphi}^\delta_t$ as well.  As all the functions involved are quasi psh and bounded on $B_\delta$, by Bedford--Taylor theory, positivity in \eqref{eq: proof convex AM big approximant} is preserved as we take the limit $\delta \to 0$, and  \eqref{eq: proof convex AM big approximant} holds restricted to $B_\delta$, finishing the proof.  
\end{proof}

\begin{theorem}\label{thm: AM convex big}
        Assume that $\{\theta\}$ is big and $(0,1) \ni t \to \varphi_t \in \psh(X,\theta)$ is a subgeodesic with minimal singularities. Then the Monge-Amp\`ere energy $\AM$ is convex along $t \to \varphi_t$. 
\end{theorem}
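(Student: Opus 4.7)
My plan is to reduce to the Lipschitz setting of Subsection 3.2, where Corollary \ref{cor: AM convex approximant} already provides convexity of $\AM$ along the mollified subgeodesic, and then transfer convexity to $t \mapsto \AM(\varphi_t)$ via a limit. Since convexity is local, it suffices to prove it on an arbitrary compact subinterval $[a,b] \subset (0,1)$.

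The first step is to upgrade the pointwise-in-$t$ minimal singularities to a uniform-in-$t$ bound on compact subintervals, and then to deduce uniform Lipschitz control in $t$. Being a subgeodesic forces $t \mapsto \varphi_t(x)$ to be convex in $t$ for every $x$, since the complexification depends only on $|z|$ and is $\pi^*\theta$-psh. Fixing $[a,b] \subset (a_0,b_0) \subset (0,1)$ and using the minimal-singularity bounds $|\varphi_{a_0}-V_\theta|, |\varphi_{b_0}-V_\theta| \leq C$ together with the convex-combination inequality $\varphi_t \leq \alpha \varphi_{a_0}+(1-\alpha)\varphi_{b_0}$ provides the uniform upper bound $\varphi_t \leq V_\theta + C$ on $[a_0,b_0]$. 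For the lower bound I would pick auxiliary reference points $t_0^{-}<a_0$ and $t_0^{+}>b_0$ and rearrange the convexity inequality $\varphi_{a_0} \leq \beta \varphi_t + (1-\beta)\varphi_{t_0^{-}}$ (and its mirror image), using the minimal-singularity upper bound on $\varphi_{t_0^{\pm}}$, to obtain $\varphi_t \geq V_\theta - C'$ uniformly for $t\in[a_0,b_0]$. Since $t \mapsto \varphi_t(x)-V_\theta(x)$ is convex in $t$ and uniformly bounded, the standard fact that a convex function on $[a_0,b_0]$ with sup-norm bound $M$ is $2M/\min(a-a_0,b_0-b)$-Lipschitz on $[a,b]$ produces a constant $L>0$ with $|\varphi_t(x)-\varphi_{t'}(x)| \leq L|t-t'|$ for all $t,t' \in [a,b]$ and all $x \in X$ (outside the pluripolar locus $\{V_\theta = -\infty\}$, which is irrelevant for the Monge--Amp\`ere energy).

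With uniform Lipschitz regularity available, I would apply the mollification from Subsection 3.2: for $\vep$ small enough, $\varphi_{\vep,t}(x) = \int_{a_0}^{b_0} \varphi_s(x)\, \chi_\vep(t-s)\, ds$ is a Lipschitz subgeodesic with minimal singularities on $[a,b]$, to which Corollary \ref{cor: AM convex approximant} applies, giving convexity of $t \mapsto \AM(\varphi_{\vep,t})$. The uniform Lipschitz estimate gives $|\varphi_{\vep,t}-\varphi_t| \leq L\vep$ uniformly in $x \in X$. Since both $\varphi_{\vep,t}$ and $\varphi_t$ have minimal singularities (the former because it remains trapped between $V_\theta - C$ and $V_\theta + C$), continuity of $\AM$ under uniform convergence (Section 2.1, \cite[Theorem 2.17]{BEGZ10}) yields $\AM(\varphi_{\vep,t}) \to \AM(\varphi_t)$ pointwise in $t \in [a,b]$. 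As a pointwise limit of convex functions is convex, we conclude that $t \mapsto \AM(\varphi_t)$ is convex on $[a,b]$, hence on all of $(0,1)$.

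The main obstacle is the first step: extracting a uniform-in-$t$ minimal singularity bound from the pointwise hypothesis. Without this, the Lipschitz-in-$t$ input required by the machinery of Subsection 3.2 is unavailable. Once it is in place, the mollification and limit argument is essentially a direct invocation of the results already proved.
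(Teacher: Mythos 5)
Your proposal is correct and follows essentially the same route as the paper: reduce to a uniformly Lipschitz subgeodesic on compact subintervals of $(0,1)$ (a step the paper dispatches as a ``without loss of generality'', which you justify via $t$-convexity and the minimal-singularity bounds), mollify in the time variable, invoke Corollary \ref{cor: AM convex approximant}, and pass to the limit using continuity of $\AM$ (the paper cites \cite[Proposition 4.3]{BB10}, you use uniform convergence — an immaterial difference). No gaps.
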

\begin{proof} Fix $\vep >0$ for the moment. As it suffices to show convexity of $ t \to \AM(\varphi_t)$ on $(\vep,1-\vep)$, and $t \to \varphi_t$ has minimal singularity potentials, without loss of generality we can assume that $t \to \varphi_t$ is Lipschitz in $t$ and let $C>0$ be such that $|\varphi_t-\varphi_{t'}|\leq C|t-t'|$.  Let $\varphi_{\vep,t}$ be the subgeodesics approximating $t \to \varphi_t$ constructed above. Since $\AM(\varphi_{\vep,t})\to \AM(\varphi_t)$ as $\vep\to 0$ \cite[Proposition 4.3]{BB10}, it suffices to prove the convexity of $t \to \AM(\varphi_{\vep,t})$. But this follows from Corollary \ref{cor: AM convex approximant}. 
\end{proof}

\subsection{Linearity of the Monge-Amp\`ere energy along geodesics}
The regularization technique in the Subsection \ref{sect: approx. of sub-geo} can also be used to prove linearity of Monge--Amp\`ere energy along geodesics in big classes.  Although this result is not used in this paper we think it will be useful in the future. 

Assume $\theta$ is a smooth closed $(1,1)$-form whose cohomology class is big.
Fix $\varphi_0,\varphi_1$ two $\theta$-psh functions on $X$ with minimal singularities. The complex plane $\CC$ is now viewed as a piece of $\CC\PP^1$ (so that $\CC\PP^1=\CC \cup H_{\infty}$) equipped with the Fubini-Study metric $\omega_{FS}$. Accodringly, the product $M:=X\times \CC\PP^1$ is equipped with a smooth $(1,1)$-form $\Theta:=\pi_1^*\theta +\pi_2^* \omega_{FS}$. We use the following change of coordinates between $\CC\PP^1$ and $\CC$:
\[
\CC\PP^1\setminus H_{\infty} \ni [z_1:z_2] \mapsto \frac{z_1}{z_2} \in \CC,
\]
where $H_{\infty}:=\{z_2=0\}$.

\begin{lemma}
        \label{lem: minimal sing product}
One has $V_{\Theta}(x,z):=V_{\theta}(x)$ for all $z\in \CC\PP^1$. 
\end{lemma}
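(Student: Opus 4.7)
The plan is to prove the two inequalities $V_\Theta \geq V_\theta \circ \pi_1$ and $V_\Theta \leq V_\theta \circ \pi_1$ separately, working directly from the definition of the envelope. The first inequality will be immediate; the work lies in restricting $\Theta$-psh candidates to fibers $X \times \{z_0\}$ and showing these remain $\theta$-psh and bounded by $0$.

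For the $\geq$ direction, I would show that the pullback $V_\theta \circ \pi_1$ is admissible in the family defining $V_\Theta$. Indeed, it is upper semicontinuous (since $V_\theta$ is) and
\[
\Theta + dd^c(V_\theta \circ \pi_1) = \pi_1^*(\theta + dd^c V_\theta) + \pi_2^*\omega_{FS} \geq 0
\]
because both summands are non-negative currents. Since $V_\theta \leq 0$ on $X$, we have $V_\theta \circ \pi_1 \leq 0$ on $M$. Hence $V_\theta \circ \pi_1$ is one of the functions over which we take the supremum in the definition of $V_\Theta$, which yields $V_\Theta \geq V_\theta \circ \pi_1$.

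For the $\leq$ direction, fix an arbitrary $u \in \psh(M,\Theta)$ with $u \leq 0$. I want to show that for every $z_0 \in \CPP^1$ and every $x_0 \in X$, we have $u(x_0,z_0) \leq V_\theta(x_0)$. Work in a coordinate neighbourhood of $z_0$ on $\CPP^1$ on which $\omega_{FS} = dd^c \phi$ for some smooth local potential $\phi$. Then $u + \pi_2^* \phi$ is locally $\pi_1^*\theta$-psh in a neighbourhood of $X \times \{z_0\}$. Restricting to the slice $z = z_0$ and subtracting the constant $\phi(z_0)$, the function $x \mapsto u(x,z_0)$ is either identically $-\infty$ (in which case the inequality is trivial at $(x_0,z_0)$) or is $\theta$-psh on $X$ (the local $\theta$-psh property upgrades to global via a standard partition-of-unity argument, or one notes that $u(\cdot,z_0)$ is the restriction of the globally $\Theta$-psh $u$ to a slice and apply the standard slicing result). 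Since $u \leq 0$, we get $u(\cdot,z_0) \leq 0$, and by the defining property of $V_\theta$, $u(\cdot,z_0) \leq V_\theta$ pointwise on $X$. Taking the supremum over all such $u$ gives a function pointwise bounded by the usc function $V_\theta \circ \pi_1$, so the usc regularization $V_\Theta$ also satisfies $V_\Theta \leq V_\theta \circ \pi_1$.

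The mildly delicate step is verifying that a $\Theta$-psh function restricts to a $\theta$-psh function on the slice $X\times \{z_0\}$ (or is identically $-\infty$ there), which I would handle via the local potential trick above; everything else is bookkeeping. Combining both inequalities completes the proof.
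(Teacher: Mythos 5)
Your proof is correct and takes essentially the same route as the paper: the inequality $V_\Theta \geq V_\theta\circ\pi_1$ because the pullback is a candidate in the envelope, and the reverse inequality by restricting $\Theta$-psh functions to the slices $X\times\{z\}$, where the restriction is $\theta$-psh (or $\equiv-\infty$) and bounded by $0$, hence dominated by $V_\theta$. The only cosmetic difference is that the paper restricts the envelope $V_\Theta$ itself to each slice, while you restrict each candidate $u$ and then pass to the (regularized) supremum.
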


\begin{proof}
By definition 
$$V_{\Theta}(x,z):= \sup\{U(x,z)\quad \Theta-{\rm psh}\;:\; U\leq 0 \;  {\rm on\; }M\}.$$
Clearly $V_{\Theta}(x,z)\geq V_{\theta}(x)$ since $V_{\theta}(x)$ is  a candidate in the envelope. Moreover, we observe that for each $z\in \CC\PP^1$, $V_{\Theta}(x,z)$ is a $\theta$-psh function and $V_{\Theta}(x,z)\leq0$ on $X$, thus $V_{\Theta}(x,z)\leq V_\theta (x)$. Hence the conclusion.
\end{proof}

\begin{lemma}
        \label{lem: harmonicity}
        Let $F\in \Cc^{\infty}(M,\RR)$ be a smooth function on $M$ which is $S^1$-invariant when restricted to $X\times \CC$.  Denote by $\Phi:=P_{\Theta}(F)$ the Monge-Amp\`ere envelope on $M$ of $F$ with respect to $\Theta$. Then the function $
        \CC\PP^1 \ni z \mapsto G(z):= \AM(\Phi(\cdot,z)) $
        satisfies 
        \begin{equation}
                \label{eq: harmonic M}
                \omega_{FS}+dd_z^c G = (\pi_2)_\star (\Theta +dd_{x,z}^c \Phi)^{n+1},
        \end{equation}
        in the sense of currents. 
\end{lemma}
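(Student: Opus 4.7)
Both sides of the asserted identity are $(1,1)$-currents on $\mathbb{CP}^1$, so it suffices to verify the equality locally on each of two affine charts; we work on $V := \mathbb{CP}^1 \setminus \{\infty\}$ where $\omega_{FS} = dd^c_z \rho$ for a smooth, $S^1$-invariant local potential $\rho(z) = \log(1+|z|^2)$ (the argument near $\infty$ is symmetric via $z \mapsto 1/z$). Setting $H(z) := G(z) + \rho(z)$, the claim becomes $dd^c_z H = (\pi_2)_* \Theta_\Phi^{n+1}$ on $V$, where $\Theta_\Phi := \Theta + dd^c \Phi$. Because $F$ is bounded and $S^1$-invariant, Lemma \ref{lem: minimal sing product} gives $V_\theta - C \leq \Phi \leq V_\theta + C$, so $\Phi$ has minimal singularities, and the $S^1$-invariance transfers to $\Phi$, so we may write $\Phi(x,z) = \hat\Phi(x,\log|z|) =: \hat\Phi_t(x)$; the curve $t \mapsto \hat\Phi_t$ is then a Lipschitz-in-$t$ subgeodesic with minimal singularities.

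\textbf{Paragraph 2 (reduction to smooth case).} Apply the convolution-in-$t$ regularization of Subsection \ref{sect: approx. of sub-geo} to obtain $\hat\Phi^{(\varepsilon)}_t$ that are smooth in $t$, uniformly Lipschitz, and decrease to $\hat\Phi_t$ as $\varepsilon \to 0^+$, with complexification still $\Theta$-psh on $X \times V$. Follow with a Demailly-type space smoothing on $\Amp(\{\theta\}) \times V$ to get smooth, $S^1$-invariant, $\Theta$-psh approximants $\Phi^{(\varepsilon,\delta)}$ decreasing to $\Phi^{(\varepsilon)}$ as $\delta \to 0^+$. By \cite[Proposition 2.10]{BEGZ10}, $G^{(\varepsilon,\delta)} \to G$ locally uniformly on $V$, and by Corollary \ref{cor: convergence big}, $\Theta_{\Phi^{(\varepsilon,\delta)}}^{n+1} \to \Theta_\Phi^{n+1}$ as currents on $\Amp(\{\theta\}) \times V$; the complement of the ample locus in $X$ is pluripolar, so it does not contribute to the non-pluripolar product. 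Thus it suffices to prove the identity for the smooth $S^1$-invariant approximants.

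\textbf{Paragraph 3 (computation in the smooth case, main obstacle).} For a smooth, $S^1$-invariant, $\Theta$-psh $\Phi$ on $\Amp(\{\theta\}) \times V$, $S^1$-invariance reduces $dd^c_z H$ to $\tfrac{d^2 \hat H}{dt^2}\, dt \wedge d^c t$ where $\hat H(t) := H(e^t)$, using $dd^c_z t = 0$ since $t = \log|z|$ is harmonic. By Lemma \ref{lem: AM is differentiable along approximants},
\[
\tfrac{d^2}{dt^2}\AM(\hat\Phi_t) = \tfrac{1}{(n+1)\vol(\{\theta\})}\left[\int_X \ddot{\hat\Phi}_t \, \theta_{\hat\Phi_t}^n - n\int_X d_x\dot{\hat\Phi}_t \wedge d^c_x \dot{\hat\Phi}_t \wedge \theta_{\hat\Phi_t}^{n-1}\right],
\]
while the $\rho$-contribution to $\tfrac{d^2 \hat H}{dt^2}$ accounts for the $\omega_{FS}$ term on the left of the lemma. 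On the other hand, expanding $\Theta_\Phi = \pi_1^*\theta + \pi_2^*\omega_{FS} + dd^c\Phi$ and decomposing $dd^c$ into $x$-only, $z$-only, and mixed $\partial_x\bar\partial_z\Phi + \partial_z\bar\partial_x\Phi$ parts, only components of pure $(n,n)$-type in $x$ and $(1,1)$-type in $z$ survive the pushforward $(\pi_2)_*$; $S^1$-invariance reduces all surviving pieces to multiples of $dt \wedge d^c t$. The mixed parts $\partial_x\bar\partial_z\Phi \wedge \partial_z\bar\partial_x\Phi \wedge \theta_{\hat\Phi_t}^{n-1}$ (wedged with the remaining factors) yield exactly the $-n\, d_x\dot{\hat\Phi}_t \wedge d^c_x \dot{\hat\Phi}_t \wedge \theta_{\hat\Phi_t}^{n-1}$ term after Fubini integration over $X$, while $dd^c_z \hat\Phi$ paired with $\theta_{\hat\Phi_t}^n$ gives the $\ddot{\hat\Phi}_t$ term; comparison completes the identity. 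The principal difficulty is this bookkeeping---precisely tracking the bi-degree expansion, exploiting $S^1$-invariance to eliminate all non-$dt\wedge d^c t$ pieces, and matching signs and normalizations. This is the big-class analogue of the calculation in \cite[Section 2.2]{Bern}; the extension is enabled by the smoothness of the approximants on the ample locus and the convergence from Corollary \ref{cor: convergence big}.
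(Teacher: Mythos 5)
Your Paragraph 3 is essentially the right Berndtsson-type bookkeeping, and your Paragraph 1 reduction is fine, but the bridge in Paragraph 2 has a genuine gap: you cannot perform a ``Demailly-type space smoothing'' to obtain globally defined, smooth, $S^1$-invariant, $\Theta$-psh approximants $\Phi^{(\varepsilon,\delta)}$ decreasing to $\Phi^{(\varepsilon)}$. In a big class Demailly regularization loses positivity (one only gets $(\Theta+\delta\omega_M)$-psh approximants, or approximants with analytic singularities), and when $\{\theta\}$ is big but not nef there are \emph{no} bounded --- let alone smooth --- $\theta$-psh functions at all, so a decreasing family of smooth $\Theta$-psh potentials with minimal singularities simply does not exist in general. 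Smoothing only over $\Amp(\{\theta\})\times V$ does not rescue this, since the resulting functions are not global $\Theta$-psh potentials and the quantities $G^{(\varepsilon,\delta)}(z)=\AM(\Phi^{(\varepsilon,\delta)}(\cdot,z))$ and the claimed convergences (e.g.\ via \cite[Proposition 2.10]{BEGZ10}) are no longer meaningful. This is precisely the obstruction the paper's Subsection \ref{sect: approx. of sub-geo} is designed to circumvent: one regularizes only in the $z$ (equivalently $t$) variable, and then proves the second-derivative formula directly for families that are smooth in $t$ but merely have minimal singularities in $x$ (Lemma \ref{lem: AM is differentiable along approximants}), writing $\dot\varphi_{\varepsilon,t},\ddot\varphi_{\varepsilon,t}$ as differences of quasi-psh functions and invoking the quasi-continuity/convergence machinery of Corollary \ref{cor: convergence big}; convolution in $x$ is used only \emph{locally}, in a coordinate ball inside the ample locus, to verify positivity and identify the fiberwise integrand as the local Monge--Amp\`ere measure (as in the proof of Corollary \ref{cor: AM convex approximant}), never to build global smooth approximants.

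Two smaller points. First, even the $t$-convolution is not exactly $\Theta$-psh, because $\omega_{FS}$ is not invariant under $z\mapsto e^{s}z$; the paper handles this by allowing the slightly larger forms $\Theta_\varepsilon=\pi_1^*\theta+\gamma_\varepsilon\pi_2^*\omega_{FS}$ with $\gamma_\varepsilon\downarrow 1$ (alternatively one can first add the local potential $\rho$ and mollify the resulting $\pi_1^*\theta$-psh function), so your ``complexification still $\Theta$-psh'' needs this correction. Second, when passing to the limit in the pushforward, the observation that $X\setminus\Amp(\{\theta\})$ is pluripolar does not by itself prevent the approximating Monge--Amp\`ere masses from concentrating near $(X\setminus\Amp(\{\theta\}))\times H_\infty$; the paper controls this with the capacity comparison of Theorem \ref{thm: comparison of capacity} (choosing an open neighborhood of that set of small capacity) combined with Bedford--Taylor convergence on its complement, and some such uniform-in-$\varepsilon$ control must be supplied. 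Also, keeping the factor $\tfrac{1}{(n+1)\vol(\{\theta\})}$ in your second-derivative formula is inconsistent with the unnormalized identity \eqref{eq: harmonic M} you are aiming for; the computation should be carried out with the normalization used in Lemma \ref{lem: AM is differentiable along approximants}.
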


\begin{proof}
        As $H_{\infty}$ is pluripolar in $\CC\PP^1$ it suffices to prove \eqref{eq: harmonic M} in $\CC$.  As $F$ is $S^1$-invariant it follows that $\Phi(x,z)$ is also $S^1$-invariant in $z$. Using convolution as in Subsection \ref{sect: approx. of sub-geo} we denote by $\Phi_{\vep}$ the approximants, i.e. 
\[
\Phi_{\vep}(x,z) := \int_{\CC} \Phi(x,\zeta) \chi_{\vep}(|z-\zeta|^2)dV(\zeta),
\]      
where $\chi_{\vep}$ is a family of smoothing kernels.   For each $\vep>0$, $\Phi_{\vep}$ is $\Theta_{\vep}$-psh on $M$ where $\Theta_{\vep}:=\pi_1^*\theta +\gamma_{\vep}\pi_2^* \omega_{FS}$, with $\gamma_{\vep}>1$ decreasing to $1$. Indeed, 
\[
        dd^c_{x,z} \Phi\star \chi_{\vep} = dd^c \Phi \star \chi_{\vep} \geq (-\theta  -\omega_{FS})\star \chi_{\vep} \geq -\theta -\omega_{FS}\star \chi_{\vep}. 
        \]      
Moreover, since $\omega_{FS}$ is smooth on $\CC\PP^1$, we have
        \[|\omega_{FS} -\omega_{FS}\star \chi_{\vep}| \leq \int_{\CC}|\omega_{FS}(z)-\omega_{FS}(z-\zeta)|\chi_\vep(|\zeta|^2) dV(\zeta) \leq C \int_{\CC}|\zeta|\,\chi_\vep(|z'|^2) dV(z')\leq C \vep . \]
This means that we can find $\gamma_\vep$ decreasing to $1$ such that $\omega_{FS}\star \chi_\vep \leq \gamma_\vep \omega_{FS}$.
 It can be shown in the same way as in Subsection \ref{sect: approx. of sub-geo} that $G_{\vep}(z):=\AM(\Phi_{\vep}(\cdot, z))$ is smooth and its complex Hessian is given by 
        \begin{equation}\label{eq: derivative AM epsilon}
        \gamma_{\vep}\omega_{FS} + dd_z^c G_{\vep}(z) = (\pi_2)_\star (\Theta_{\vep} +dd_{x,z}^c \Phi_{\vep})^{n+1}.
        \end{equation}
        To prove this we first explain how to compute all the second order partial derivatives of $G_{\vep}$. Fix $\xi\in \CC=\RR^2$ a unit vector  and denote by $\partial_{\xi} f(z):=\lim_{h\to 0^+} (f(z+h\xi)-f(z))/h$ the derivative of $f$ in the direction $\xi$. Note that 
        \[
        \frac{1}{h} \left[\chi_{\vep}(|z+h\xi|^2)-\chi_{\vep}(|z|^2)\right] \rightarrow \partial_{\xi}(\chi_{\vep}(|z|^2))
        \]
        as $h\to 0$ uniformly in $z$. It follows that 
        \[
        \frac{1}{h} \left[\Phi_{\vep}(x,z+h\xi)-\Phi_{\vep}(x,z)\right] \rightarrow \partial_{\xi}\Phi_{\vep}(z)
        \]
        as $h\to 0$ uniformly in $x,z$. By the same arguments as in Subsection \ref{sect: approx. of sub-geo} the first and second partial derivatives of $z\mapsto \Phi_{\vep}(x,z)$ can be written as $C_{\vep}(u^+ -u^{-})$ where $u^{\pm}$ are $\theta$-psh functions with minimal singularities. They are thus uniformly bounded by a constant $C'_{\vep}$ (which blows up as $\vep\to 0$). Thus the same arguments as in Subsection \ref{sect: approx. of sub-geo} show that $z\mapsto G_{\vep}(z)$ is twice differentiable (even smooth). Set $g_{\vep}=\gamma_{\vep}\log (1+|z|^2)$, where $g=\log (1+|z|^2)$ is the potential of $\omega_{FS}$ in $\CC$. The second derivative $\partial_z \partial_{\bar{z}}$ of $G_{\vep}+g_{\vep}$ is given by 
\[
\partial_z \partial_{\bar{z}} (G_{\vep}+g_{\vep}) = \int_X \partial_z \partial_{\bar{z}} (\Phi_{\vep}+g_{\vep}) (\theta + dd^c_x \Phi_{\vep})^n -n d_x \partial_z(\Phi_{\vep}+g_{\vep}) \wedge d^c_x \partial_{\bar{z}} (\Phi_{\vep}+g_{\vep}) \wedge (\theta + dd^c_x \Phi_{\vep})^{n-1}.
\]

Let $H(z)$ denote the integrand in the right-hand side which is a positive measure on $X$.  Indeed, since it is a local property we can argue locally and use an approximation technique as in the proof of Corollary \ref{cor: AM convex approximant}. Moreover, we infer that $\frac{\sqrt{-1}}{\pi}H(z)dz\wedge d\bar{z}$  is the Monge-Amp\`ere measure of $\Phi_{\vep}+g_{\vep}$ with respect to the form $\pi_1^* \theta$, i.e. $(\pi_1^*\theta +dd^c (\Phi_{\vep}+g_{\vep}))^{n+1}$. This together with
\begin{eqnarray*}
                \gamma_{\vep}\omega_{FS} + dd_z^c G_{\vep}(z) =  dd_z^c (g_{\vep} + G_{\vep}) = dd_z^c \AM(\Phi_{\vep}(\cdot, z) + g_{\vep}(z))
\end{eqnarray*}
justify the formula \eqref{eq: derivative AM epsilon}. 

Fix $\chi: \CC\rightarrow \RR$ a smooth function with compact support in $\CC$. We want to prove that 
        \[
        \int_{\CC} \chi (\omega_{FS} + dd^c G) = \int_{M} \chi (\Theta +dd^c \Phi)^{n+1}.
        \] 
The above formula is true for the approximants $G_{\vep}$ and $\Theta_{\vep}$ as  we discussed above. Now we explain how we can insure the convergence when we take the limit in \eqref{eq: derivative AM epsilon} as $\vep\to 0$.
 To deal  with the left-hand side we prove that $G_{\vep}$ converge pointwise to $G$.  For fixed $z\in \CC$ we can find constants $c_{\vep}$ converging to $0$ such that $\Phi_{\vep} +c_{\vep}$ decreases to $\Phi$. Then $G_{\vep}(z)$ converges to $G(z)$ by basic properties of the $\AM$ functional \cite[Proposition 4.3]{BB10}. As $G_{\vep}$ is uniformly bounded independent of  $\vep$, the convergence of the current $dd^c G_{\vep}$ follows.  Now we treat the convergence of the right-hand side of \eqref{eq: derivative AM epsilon}. Fix a K\"ahler form $\omega_M$ on $M$ and $\delta>0$. Let $U$ be an open  neighbourhood of the pluripolar set $E= (X\setminus \Amp(\{\theta)\})\times H_{\infty}$ such that ${\rm Cap}_{\omega_M}(U)<\delta$. Note that Theorem \ref{thm: comparison of capacity} gives that
  \[
  {\rm Cap}_{\Theta_{\vep}}(U) \leq C {\rm Cap}_{\omega_M}(U) ^{1/n} =O(\delta^{1/n}),
  \]
  where $C$ is independent of $\vep$. And, by Bedford-Taylor theory \cite{BT82} the Monge-Amp\`ere measure $(\Theta_{\vep}+ dd^c \Phi_{\vep})^{n+1}$ converges to $(\Theta+dd^c \Phi)^{n+1}$ in $M\setminus U$.  Letting $\vep\to 0$ and then $\delta\to 0$ we arrive at the conclusion.
\end{proof}

We move forward to proving the linearity of the $\AM$ functional. Consider $(M,\Theta)$ as above and let $\rho:D\rightarrow \RR$ be a smooth potential of $\omega_{FS}$ in $D\subset \CC\PP^1$ with zero boundary values.

\begin{lemma}
        \label{lem: linear}
        Assume that $F$ is a smooth function on $M$ which is $S^1$-invariant in $X\times \CC$ in the variable $z$. Let $\varphi_t$ be the geodesic connecting $\varphi_0=P_{\theta}(F(\cdot, 1))$ to $\varphi_1=P_{\theta}(F(\cdot, e))$. Let $\Phi$ be the envelope on $M$ of $F$ with respect to $\Theta$. If $F(x,z)+ \rho(z)> \varphi_{\log|z|}(x)$ in $X\times D$ then $z\mapsto \AM(\Phi(\cdot, z)+\rho(z))$ is harmonic in $D$. 
\end{lemma}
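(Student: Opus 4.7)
The plan is to use Lemma \ref{lem: harmonicity} to recast the target harmonicity assertion as the vanishing of the Monge--Amp\`ere measure $(\Theta + dd^c \Phi)^{n+1}$ on $X \times D$, and then deduce this vanishing from the strict inequality hypothesis $F + \rho > \varphi_{\log|z|}$ via a contact-set argument of the kind made available by Proposition \ref{prop: BD vanish general}. Observe first that the conclusion is insensitive to the particular choice of $\rho$: replacing $\rho$ by another smooth potential of $\omega_{FS}$ with zero boundary values changes $\AM(\Phi(\cdot,z)+\rho(z))$ only by a harmonic function of $z$. Hence without loss of generality I will assume $\rho$ is $S^1$-invariant, which makes the product $\tilde\Phi:=\Phi+\rho$ on $X\times D$ itself $S^1$-invariant in $z$ (inheriting this from $F$ and $\rho$).

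Next, I would record the cocycle identity $\AM(\varphi+c)=\AM(\varphi)+c$ for all $\varphi\in\psh(X,\theta)$ of minimal singularities and $c\in\RR$, which is immediate from the definition since each of the $n+1$ non-pluripolar masses equals $\vol(\{\theta\})$. Since $F$ is bounded, $\Phi=P_{\Theta}(F)$ has minimal singularities on $M$, and Lemma \ref{lem: minimal sing product} ($V_{\Theta}=V_\theta$) forces each slice $\Phi(\cdot,z)$ to be of minimal singularity type in $\psh(X,\theta)$. Writing $\tilde G(z):=\AM(\Phi(\cdot,z)+\rho(z))$ and $G(z):=\AM(\Phi(\cdot,z))$, one therefore has $\tilde G=G+\rho$ on $D$. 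Combining this with Lemma \ref{lem: harmonicity} and the relation $dd^c_z\rho=\omega_{FS}$ on $D$ gives
\[
dd^c_z \tilde G \;=\; dd^c_z G + \omega_{FS} \;=\; (\pi_2)_\star (\Theta + dd^c \Phi)^{n+1} \quad\textup{on } D,
\]
so harmonicity of $\tilde G$ on $D$ is equivalent to $(\Theta+dd^c\Phi)^{n+1}$ vanishing on $X\times D$ modulo pluripolar sets.

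For the vanishing, I would invoke Proposition \ref{prop: BD vanish general} with the big class $\{\Theta\}$ and the smooth (hence usc, quasi-continuous) function $F$: the non-pluripolar measure $(\Theta+dd^c\Phi)^{n+1}$ does not charge $\{\Phi<F\}$, so it is enough to prove $\Phi<F$ strictly on $X\times D$. Note that $\pi_1^*\theta + dd^c\tilde\Phi = \Theta + dd^c\Phi \geq 0$ on $X\times D$, so $\tilde\Phi$ is $\pi_1^*\theta$-psh there. By the $S^1$-invariance, $\tilde\Phi$ is the complexification of a curve $t\mapsto\tilde\Phi(\cdot,e^t)$, and its boundary limits are dominated by $\Phi(\cdot,1)\leq P_\theta(F(\cdot,1))=\varphi_0$ and by $\Phi(\cdot,e)\leq\varphi_1$ (using that $\rho$ vanishes on $\partial D$). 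Each slice has minimal singularities in $\psh(X,\theta)$ by the preceding paragraph. Thus $\tilde\Phi$ is an admissible subgeodesic in the definition \eqref{eq: geoddef}, so $\tilde\Phi(x,z)\leq\varphi_{\log|z|}(x)$ on $X\times D$. Substituting this into the strict assumption gives
\[
\Phi(x,z) + \rho(z) \;=\; \tilde\Phi(x,z) \;\leq\; \varphi_{\log|z|}(x) \;<\; F(x,z) + \rho(z)
\]
on $X\times D$, i.e., $\Phi<F$ strictly there, as required.

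The most delicate link in this chain is not any single calculation but the verification that $\tilde\Phi$ is genuinely admissible as a subgeodesic joining $\varphi_0$ to $\varphi_1$: one must confirm that its boundary limits exist and are dominated by $\varphi_0,\varphi_1$, and that every slice has minimal $\theta$-singularities. Both points rest on Lemma \ref{lem: minimal sing product} combined with the boundedness of $F$, plus the $S^1$-invariance reduction noted at the outset. Once this is in place, Lemma \ref{lem: harmonicity}, Proposition \ref{prop: BD vanish general}, and the weak-geodesic comparison chain above conclude the proof.
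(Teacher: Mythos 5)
Your proposal is correct and follows essentially the same route as the paper: show $\Phi+\rho$ is an admissible subgeodesic so that $\Phi+\rho\leq\varphi_{\log|z|}<F+\rho$ on $X\times D$, deduce that the Monge--Amp\`ere measure of $\Phi$ vanishes there (the paper cites the balayage argument of Bedford--Taylor where you invoke Proposition \ref{prop: BD vanish general}, which is the same mechanism), and conclude harmonicity via Lemma \ref{lem: harmonicity}. Your extra care about the $S^1$-invariance of $\rho$, the cocycle identity $\AM(u+c)=\AM(u)+c$, and the minimal singularities of the slices via Lemma \ref{lem: minimal sing product} just makes explicit what the paper leaves implicit.
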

\begin{proof}
When restricted to $X\times D$, we have $\Phi +\rho$ is $\pi_1^* \theta$-psh and has boundary values $\varphi_{0,1}$, thus by definition, $\Phi  \leq \varphi_t-\rho<F$.  It follows from \cite{BT82}  that $(\Theta +dd^c \Phi)^{n+1}=0$ in $X\times D$, which in turn implies that $\AM(\Phi(\cdot,z)+\rho(z))$ is harmonic in $D$ thanks to Lemma \ref{lem: harmonicity}.
\end{proof}

\begin{theorem}\label{thm: AM linear big}
        The $\AM$ energy is linear along weak geodesics with minimal singularities. 
\end{theorem}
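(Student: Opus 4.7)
The plan is to combine the convexity of Theorem \ref{thm: AM convex big} with a Hessian computation on the product $M=X\times \CC\PP^1$: since convexity is already in hand, it suffices to show $t\mapsto \AM(\varphi_t)$ is also concave, i.e., affine. To this end I will lift the geodesic to $M$, untwist by the Fubini--Study potential $\rho$, and apply (a mild extension of) Lemma \ref{lem: harmonicity}; untwisting turns the $\omega_{FS}$-subharmonicity provided by that lemma into outright harmonicity on the annulus $D$, and $S^1$-invariance of the geodesic forces the resulting radial harmonic function on $D$ to be affine in $t=\log|z|$.

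Concretely, let $\Phi(x,z):=\varphi_{\log|z|}(x)$ be the complexification of the weak geodesic. By Lemma \ref{lem: boundary limit of weak geodesic}, $\Phi$ is bounded, $S^1$-invariant, Lipschitz in $\log|z|$, $\pi_1^*\theta$-psh on $X\times D$, and satisfies $(\pi_1^*\theta+dd^c\Phi)^{n+1}=0$ on $\Amp(\{\theta\})\times D$. Set $\Psi:=\Phi-\rho$ on $X\times D$ and extend it by $\varphi_0(x)$ on $X\times \{|z|\leq 1\}$ and $\varphi_1(x)$ on $X\times \{|z|\geq e\}$. Since $dd^c\rho=\omega_{FS}$, $\rho=0$ on $\partial D$, and $\omega_{FS}\geq 0$, one checks $\Psi$ is $\Theta$-psh on each of the three pieces and continuous across $\partial D$, hence globally $\Theta$-psh with minimal singularities, and $(\Theta+dd^c\Psi)^{n+1}$ coincides with $(\pi_1^*\theta+dd^c\Phi)^{n+1}$ on $X\times D$, vanishing on $\Amp(\{\theta\})\times D$. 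Applying Lemma \ref{lem: harmonicity} to $\Psi$ produces, on $D$,
\[
\omega_{FS}+dd^c_z G \;=\; (\pi_2)_\star(\Theta+dd^c\Psi)^{n+1} \;=\; 0, \qquad G(z):=\AM(\Psi(\cdot,z)).
\]
Using the shift-equivariance $\AM(u+c)=\AM(u)+c$ on the class of minimal-singularity potentials (itself a consequence of $\int_X\theta_u^j\wedge\theta_{V_\theta}^{n-j}=\vol(\{\theta\})$ for such $u$), I can rewrite $G(z)=\AM(\Phi(\cdot,z))-\rho(z)$. Substituting and using $dd^c\rho=\omega_{FS}$ collapses the display above to $dd^c_z\AM(\Phi(\cdot,z))=0$ on $D$; being $S^1$-invariant and harmonic on the annulus, $z\mapsto \AM(\varphi_{\log|z|})$ has the form $a\log|z|+b$, which is exactly the asserted linearity of $t\mapsto \AM(\varphi_t)$.

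The main technical hurdle is the promotion of Lemma \ref{lem: harmonicity} from the envelope case $\Psi=P_\Theta(F)$ to the non-envelope $\Psi$ built above. Inspecting its proof, the envelope structure plays no essential role: convolving in $z$ with an $S^1$-invariant kernel as in Subsection \ref{sect: approx. of sub-geo} produces smooth $\Theta_\vep$-psh approximants $\Psi_\vep$ (with $\Theta_\vep\searrow \Theta$) whose Hessians one computes by the very same scheme, and then passes to the limit using the quasi-continuity and capacity-comparison tools of Subsection 2.4 (Theorem \ref{thm: comparison of capacity} and Corollary \ref{cor: convergence big}). Since the $\Psi$ at hand is bounded, $\Theta$-psh of minimal singularities, and $S^1$-invariant, the same chain of arguments carries through verbatim and delivers the Hessian identity used in the previous paragraph.
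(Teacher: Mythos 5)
The decisive step in your argument --- that the function $\Psi$, defined as $\Phi-\rho$ on $X\times D$ and extended by $\varphi_0$ on $X\times\{|z|\le 1\}$ and by $\varphi_1$ on $X\times\{|z|\ge e\}$, is globally $\Theta$-psh --- is false in general, and continuity across $X\times\partial D$ is nowhere near enough to conclude it (a function psh on each side of a hypersurface and continuous across it can perfectly well have a concave kink there). To see the failure concretely, take $X$ a point and $\theta=0$, so that $\Theta=\omega_{FS}$, the ``geodesic'' joining the constants $a=\varphi_0$ and $b=\varphi_1$ is $\Phi(z)=a+(b-a)\log|z|$, and $\rho$ is the fixed potential of $\omega_{FS}$ on $D$ with zero boundary values, hence $\rho<0$ in $D$ and $\partial_r\rho<0$ on $\{|z|=1\}$ by the Hopf lemma. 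Writing $g=\log(1+|z|^2)$, the glued function $g+\Psi$ is smooth on each side of $\{|z|=1\}$, and the jump of its outward radial derivative across that circle equals $(b-a)-\partial_r\rho$; subharmonicity across the circle requires this jump to be nonnegative, which fails as soon as $b<a-|\partial_r\rho|$. Since the normal derivative of $\rho$ is a fixed quantity while the boundary slopes of the geodesic are of size comparable to $\sup_X|\varphi_0-\varphi_1|$, the single copy of $\omega_{FS}$ in $\Theta$ simply does not have enough positivity in the fiber direction to absorb the kink, and your $\Psi$ is not $\Theta$-psh whenever the endpoints are far apart. This is exactly the difficulty the paper's proof is built around: there the globally defined competitor is produced as an envelope $P_\Theta(F_j)$ of functions that blow up off $X\times\partial D$ (so global pshness is automatic), and the identification of this envelope with $\varphi_{\log|z|}-\rho$ on $X\times D$ is made via barriers $U_0,U_1$ that are only $\Theta_A$-psh for $\Theta_A=\pi_1^*\theta+A\,\pi_2^*\omega_{FS}$ with $A$ large depending on $\varphi_0,\varphi_1$, together with Proposition \ref{prop: uniqueness}.

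Your idea is salvageable, but not for free: replace $\Theta$ by $\Theta_A$ and $\rho$ by $A\rho$ with $A$ large compared with the Lipschitz constant $C$ of the geodesic (so that $C\le A\min_{\partial D}|\partial_r\rho|$), and then the jump condition at both circles has the right sign; one must still justify the gluing criterion carefully, since the weak geodesic is only Lipschitz in $t$ and the ``jump of the normal derivative'' has to be replaced by a sub-mean-value or $\max$-type gluing argument. With that modification the rest of your scheme is reasonable: your observation that the proof of Lemma \ref{lem: harmonicity} never really uses the envelope structure of $\Phi$, only that it is $S^1$-invariant, $\Theta$-psh (now $\Theta_A$-psh) with minimal singularities, is correct, and after untwisting by $A\rho$ the pushforward identity does collapse to $dd^c_z\,\AM(\varphi_{\log|z|})=0$ on $D$, whence linearity by $S^1$-invariance; note also that you do not even need to invoke Theorem \ref{thm: AM convex big}, since you obtain harmonicity directly rather than concavity. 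But as written, with $A=1$ and the assertion ``one checks $\Psi$ is $\Theta$-psh \ldots hence globally $\Theta$-psh'', the proof has a genuine gap at its central construction.
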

\begin{proof}
        Fix $f_0,f_1$ two smooth functions in $X$  and denote by $\varphi_i=P_{\theta}(f_i), i=0,1$ the envelopes of $f_0,f_1$ respectively.  Let $\varphi_t$ be the geodesic connecting $\varphi_0,\varphi_1$. Observe also that by approximating any two given potentials with minimal singularities with a sequence of smooth functions, it suffices to prove linearity of $\AM$ along $\varphi_t$.

Let $F$ be a function on $M$ which is $S^1$-invariant in $X\times \CC$ in the variable $z$ and such that $F=f_{0,1}$ on $X\times \partial D$ and $+\infty$ elsewhere. Consider a sequence $(F_j)_{j}$ of smooth functions $F_j\uparrow F$, which are also $S^1$-invariant in $X\times \CC$ in the variable $z$ and such that $F_j + \rho >\varphi_{\log |z|}(x)$ in $X\times D$. Let $\Phi_j$ be the envelope on $M$ of $F_j$ with respect to $\Theta$. Then $(\Theta+dd^c\Phi_j)^{n+1}$ is supported on 
$\{\Phi_j=F_j\}$. As $X\times \partial D$ is non-pluripolar in $M$ it follows that $\Phi_j$ is uniformly bounded from above. Thus $\Phi_j$ converges increasingly (almost everywhere) to $\Phi$ a $\Theta$-psh function with minimal singularities. We claim that 
\begin{equation}\label{vanishing}
\int_{\{\Phi<P_{\Theta}(F)\}}(\Theta+dd^c \Phi)^{n+1}=0.
\end{equation}
Too see this we observe that the Monge-Amp\`ere measure of $\Phi$ is concentrated on $X\times \partial D$ and that$ \{\Phi<P_{\Theta}(F)\}\subset X\times (\CC\PP^1\setminus \partial D)$ since $\Phi=P_{\Theta}(F)=\varphi_{0,1}$ on $X\times \partial D$. Indeed, on any open relatively compact subset $K$ of $X\times (\CC\PP^1\setminus \partial D)$  one has that $\Phi_j < F_j$ for $j$ large enough (since $F_j$ increases to $+\infty$ uniformly in $K$ and $\Phi_j$ is uniformly bounded from above). By  the continuity property of the complex Monge-Amp\`ere operator we get that $(\Theta+dd^c \Phi)^{n+1}(K)=0$. It follows from (\ref{vanishing}) and the domination principle \cite[Corollary 2.5]{BEGZ10} that $\Phi=P_{\Theta}(F)$.

Now, we claim that $\Phi+\rho=\varphi_{\log |z|}$ in $X\times D$. Indeed, consider 
\[
\begin{cases}
U_0(x,z):= \varphi_0(x)+ A(\log (|z|^2+3)-\log(|z|^2+1) -\log 2); \\ U_1(x,z):=\varphi_1(x)+ A(\log |z|^2-\log(|z|^2+1) +\log (e^2+1) -2).
\end{cases}
\] 
For $A>0$ big enough  $U:=\max(U_0,U_1) = \varphi_{0,1}$ on $\partial D$
%
and it is $\Theta_A$-psh, where $\theta_A:=\pi_1^*\theta +A\pi_2^*\omega_{FS}$. So we can apply our previous analysis with this $(1,1)$-form $\Theta_A$ instead of $\Theta$. By definition of envelope we have $\Phi\geq U$ and in particular $\Phi \geq \varphi_{0,1}$ on $X\times \partial D$. Moreover, for each $z\in \partial D$, $\Phi(\cdot, z)$ is $\theta$-psh and dominated by $F=f_{0,1}$. It then follows that $\Phi\leq \varphi_{0,1}$ on $X\times \partial D$, giving the equality on the boundary. Furthermore, it follows from the proof of Lemma \ref{lem: linear} and the continuity of the Monge-Amp\`ere operator that the Monge-Amp\`ere measure $(\pi_1^*\theta +dd^c \rho +dd^c \Phi)^{n+1}$ vanishes in $X\times D$. Proposition \ref{prop: uniqueness} thus insures that $\Phi+\rho$ is the unique weak geodesic with minimal singularities joining $\varphi_0,\varphi_1$. Hence the claim.

Now, thanks to Lemma \ref{lem: linear} and \cite[Proposition 4.3]{BB10}, we know that $\AM(\Phi(\cdot, z)+\rho(z))= \AM(\varphi_{\log |z|})$ is harmonic in $D$ (and $S^1$-invariant!). Hence, $\AM$ is linear along $\varphi_t$, with $t=\log|z|$. This is what we wanted.  
\end{proof}

\subsection{Geodesic rays and the proof of Theorem \ref{thm: characterization E big classes}}

Given $\varphi\geq \psi$ two $\theta$-psh functions such that $\varphi$ has minimal singularities, we define the weak geodesic ray attached to $\varphi,\psi$ in the following way (see \cite{Dar13} for the K\"ahler case). 
For fixed $l>0$, we denote by $[0,l] \ni t \to u^{l}_t \in \psh(X,\theta)$ the weak geodesic segment joining $\varphi$ and $\max(\varphi-l,\psi)$. The same argument as in \cite[Lemma 4.2]{Dar13} shows that $u^{l}$ forms an increasing family of weak geodesics and we can then define the limit subgeodesic ray:
\begin{equation}\label{eq: v_t_def}
v(\varphi,\psi)_t := \left(\lim_{l\to +\infty} u^{l}_t \right)^*, \ t\in [0,+\infty).
\end{equation}
\begin{lemma}
        \label{lem: v is geodesic}
Assume that $\theta$ is big. The subgeodesic ray $[0,\infty) \ni t \to v(\varphi,\psi)_t \in \psh(X,\theta)$ is a weak geodesic ray. 
\end{lemma}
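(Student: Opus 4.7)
By definition the goal is to show that the subgeodesic ray $t \mapsto v(\varphi,\psi)_t$ from \eqref{eq: v_t_def} is, for every $T>0$, a weak geodesic segment on $[0,T]$ joining $\varphi$ to $v_T := v(\varphi,\psi)_T$. The main tool is Proposition \ref{prop: uniqueness}, which reduces the task to verifying three points uniformly in $l$: (a) each $v_t$ has minimal singularities and $\lim_{t\to 0^+}v_t=\varphi$; (b) $t\mapsto v_t$ is Lipschitz on $[0,T]$; and (c) the Monge--Amp\`ere equation $(\pi^*\theta + dd^c V)^{n+1}=0$ holds on $\Amp(\{\theta\})\times D_T$, where $D_T := \{1<|z|<e^T\}$ and $V(x,z):=v_{\log|z|}(x)$ is the complexification.

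For (a), observe that $s\mapsto\max(\varphi-s,\psi)$ has complexification $\max(\varphi(x)-\log|z|,\psi(x))$, a $\pi^*\theta$-psh function with the right boundary values, so it is a subgeodesic of $\varphi$ and $\max(\varphi-l,\psi)$, hence dominated by $u^l$. Combined with the upper bound $u^l_t \leq \varphi$ from $t$-convexity of $u^l$ (whose endpoints are both $\leq\varphi$), this yields
$$\max(\varphi-t,\psi) \;\leq\; u^l_t \;\leq\; \varphi, \quad t\in[0,l].$$
Letting $l\to\infty$, the same sandwich bounds hold for $v_t$, so $v_t$ has minimal singularities and $\lim_{t\to 0^+} v_t = \varphi$. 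For (b), note that for $l\geq T$, the restriction $u^l|_{[0,T]}$ is a subgeodesic joining $\varphi$ to $u^l_T$, with minimal singularities, Lipschitz in $t$ (with constant $\leq l$, inherited from the Lipschitz bound on $[0,l]$), and whose complexification $U^l$ satisfies $(\pi^*\theta+dd^c U^l)^{n+1}=0$ on $\Amp(\{\theta\})\times D_T$ by locality from \eqref{eq: geodesicequation}. Proposition \ref{prop: uniqueness} thus identifies $u^l|_{[0,T]}$ with the weak geodesic joining $\varphi$ to $u^l_T$. Applying Lemma \ref{lem: boundary limit of weak geodesic} to this weak geodesic, together with the endpoint bound $\|\varphi - u^l_T\|_\infty \leq T$ from (a), yields a Lipschitz constant of order at most $1$ in $t$, uniformly in $l \geq T$. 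Monotone increasing limits preserve such Lipschitz bounds pointwise (and the upper semicontinuous regularization does so as well), so $v|_{[0,T]}$ is Lipschitz in $t$.

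For (c), the complexifications $U^l$ increase almost everywhere to $V$, and each is locally bounded on $\Amp(\{\theta\})\times D_T$ thanks to minimal singularities. The Bedford--Taylor continuity of the Monge--Amp\`ere operator along monotone increasing sequences of locally bounded plurisubharmonic functions then transfers the vanishing $(\pi^*\theta+dd^c U^l)^{n+1}=0$ from $U^l$ to $V$ on $\Amp(\{\theta\})\times D_T$. All hypotheses of Proposition \ref{prop: uniqueness} are thereby met on $[0,T]$, identifying $v|_{[0,T]}$ as the weak geodesic joining $\varphi$ to $v_T$, and establishing the claim. The key technical obstacle is step (b): the naive Lipschitz constant for $u^l$ grows linearly in $l$, but by re-interpreting $u^l|_{[0,T]}$ as a weak geodesic between its actual endpoints and exploiting the uniform sup-norm bound $\|\varphi - u^l_T\|_\infty \leq T$, we recover an $l$-independent Lipschitz estimate which survives the passage to the limit.
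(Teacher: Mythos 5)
Your argument is correct and follows essentially the same route as the paper: one verifies the hypotheses of Proposition \ref{prop: uniqueness} by combining the sandwich bound $\max(\varphi-t,\psi)\le u^l_t\le\varphi$, a uniform $t$-Lipschitz estimate, and Bedford--Taylor continuity of the Monge--Amp\`ere operator along the increasing limit $U^l\nearrow V$ on $\Amp(\{\theta\})\times D$. The only (harmless) detour is your step (b): the segments $u^l$ are already $1$-Lipschitz in $t$ uniformly in $l$, since Lemma \ref{lem: boundary limit of weak geodesic}, applied to the geodesic on $[0,l]$ in its natural parametrization, yields the constant $\sup_X|\varphi-\max(\varphi-l,\psi)|/l\le 1$, so the restriction-to-$[0,T]$ and uniqueness argument is not actually needed.
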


\begin{proof}
As all complexifications $U^l \in \psh(X\times D_l,\pi^*\theta)$ satisfy the appropriate complex Monge--Amp\`ere equation on the domains $\Amp(\{\theta\}) \times D_l$ and are locally bounded there, it follows from continuity property of the complex Monge--Amp\`ere operator  that the complexification of $t \to v(\varphi,\psi)_t$ satisfies the homogeneous Monge-Amp\`ere equation on $\Amp(\{\theta\}) \times D_\infty$ as well. Since all the curves $t \to u^l_t$ are uniformly $t$-Lipschitz continuous, so is their limit $t \to v(\varphi,\psi)_t$, hence Proposition \ref{prop: uniqueness} gives that  $t \to v(\varphi,\psi)_t$ must be a weak geodesic ray, i.e., for any closed interval $I \subset [0,\infty)$, the restriction $I \ni t \to v(\varphi,\psi)_t \in \psh(X,\theta)$ is the weak geodesic joining the potentials corresponding to the endpoints of $I$.
\end{proof}

Now we introduce an invariant of $\psi$. It is clear that $[0,\infty) \ni t \to \max(\varphi-t,\psi)$ is a subgeodesic ray with minimal singularities. Thus $t \to \AM(\max(\varphi-t,\psi))$ is convex by Theorem \ref{thm: AM convex big} and decreasing by \cite[Proposition 2.8]{BEGZ10}. This implies that the following limit is well defined: 
$$c_\psi := \lim_{t \to \infty} \frac{\AM(\max(\varphi-t,\psi)) - \AM(\varphi)}{t} \leq 0.$$
Recall the  cocycle formula of \cite[Corollary 3.2]{BB10}:
\begin{equation}\label{eq: AMcocycle}
\AM(u)-\AM(v)=\frac{1}{\textup{Vol}(\theta)(n+1)}\sum_{j=0}^n\int_X (u-v) \theta_u^{j} \wedge \theta_v^{n-j},
\end{equation}
where $u,v \in \psh(X,\theta)$ have minimal singularities. From this formula it follows that $c_\psi$ is independent of the choice potential with minimal singularities $\varphi$ satisfying $\varphi \geq \psi$. Finally, the following result gives an attractive characterization of potentials $\psi$ for which $c_\psi=0$.
\begin{prop}\label{prop: E_char_c_psi} Given $\psi \in \psh(X,\theta)$, we have $c_\psi =0$ if and only if $\psi \in \mathcal E(X,\theta)$.
\end{prop}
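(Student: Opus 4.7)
The plan is to derive an explicit expression for $c_\psi$ as a linear combination of the mixed non-pluripolar masses $\int_X \theta_\psi^j \wedge \theta_\varphi^{n-j}$, which reduces the desired equivalence to the monotonicity of such masses from \cite{BEGZ10}. Without loss of generality I assume $\psi \leq \varphi$, and write $\psi_t := \max(\varphi - t, \psi)$, which has minimal singularities since $\psi_t \geq \varphi - t$. The cocycle formula \eqref{eq: AMcocycle} applied to $\psi_t$ and $\varphi$ gives
\[
\AM(\psi_t) - \AM(\varphi) = \frac{1}{(n+1)\vol(\theta)} \sum_{j=0}^n \int_X (\psi_t - \varphi)\,\theta_{\psi_t}^j \wedge \theta_\varphi^{n-j}.
\]

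The main calculation is to decompose each integrand using the plurifine open sets $\{\psi > \varphi - t\}$ and $\{\psi < \varphi - t\}$. By plurifine locality of the non-pluripolar product \cite[Prop.~1.4]{BEGZ10}, on the first set $\theta_{\psi_t}^j \wedge \theta_\varphi^{n-j}$ agrees with $\theta_\psi^j \wedge \theta_\varphi^{n-j}$, while on the second it equals $\theta_\varphi^n$; on the contact set $\psi_t - \varphi$ takes the value $-t$. Using the global mass identity $\int_X \theta_{\psi_t}^j \wedge \theta_\varphi^{n-j} = \vol(\theta)$, valid for minimal-singularity potentials \cite[Thm.~1.16]{BEGZ10}, to absorb the contact-set contribution yields
\[
\int_X (\psi_t - \varphi)\,\theta_{\psi_t}^j \wedge \theta_\varphi^{n-j} = \int_{\{\psi > \varphi - t\}} (\psi - \varphi + t)\,\theta_\psi^j \wedge \theta_\varphi^{n-j} - t\,\vol(\theta).
\]

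Dividing by $t$, the remaining integrand $(1 + (\psi - \varphi)/t)\,\mathbf{1}_{\{\psi > \varphi - t\}}$ is bounded between $0$ and $1$ and converges pointwise to $\mathbf{1}_{\{\psi > -\infty\}}$ as $t \to \infty$. Since $\theta_\psi^j \wedge \theta_\varphi^{n-j}$ does not charge the pluripolar set $\{\psi = -\infty\}$, dominated convergence yields the closed-form expression
\[
c_\psi = -1 + \frac{1}{(n+1)\vol(\theta)} \sum_{j=0}^n \int_X \theta_\psi^j \wedge \theta_\varphi^{n-j}.
\]

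To conclude, I would invoke the BEGZ monotonicity of mixed non-pluripolar masses: since $\psi \leq \varphi$ with $\varphi$ of minimal singularities, one has $\int_X \theta_\psi^n \leq \int_X \theta_\psi^j \wedge \theta_\varphi^{n-j} \leq \vol(\theta)$ for every $j$, and equality throughout is equivalent to $\int_X \theta_\psi^n = \vol(\theta)$, i.e.\ to $\psi \in \Ec(X,\theta)$. Combined with the formula above, this gives $c_\psi = 0 \iff \psi \in \Ec(X,\theta)$. The main technical obstacle I anticipate is the bookkeeping for the mass of $\theta_{\psi_t}^j \wedge \theta_\varphi^{n-j}$ on the contact set $\{\psi = \varphi - t\}$---which can be strictly positive when $\psi \notin \Ec$---but this is handled cleanly via the global mass identity, bypassing any delicate pointwise analysis on the contact set.
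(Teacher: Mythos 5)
Your computation is correct and it takes a genuinely different route from the paper. You compute $c_\psi$ in closed form: applying the cocycle formula \eqref{eq: AMcocycle} to $\psi_t=\max(\varphi-t,\psi)$ and $\varphi$, decomposing each term over the plurifine open set $\{\psi>\varphi-t\}$ and its complement (where $\psi_t-\varphi\equiv -t$), using the total-mass identity for minimal-singularity potentials and dominated convergence, you arrive at
\[
c_\psi=-1+\frac{1}{(n+1)\vol(\theta)}\sum_{j=0}^n\int_X\langle\theta_\psi^j\wedge\theta_\varphi^{n-j}\rangle,
\]
and all the steps leading to this formula are sound (the bookkeeping on the contact set via the global mass identity is exactly right, and the upper bound $\int_X\langle\theta_\psi^j\wedge\theta_\varphi^{n-j}\rangle\le\vol(\theta)$ is immediate from the definition of the non-pluripolar product, since the canonical-cutoff approximants have total mass $\vol(\theta)$). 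The paper argues differently: after normalizing $\psi\le V_\theta$ it sandwiches $\AM(\psi_t)$ between $\int_X(\psi_t-V_\theta)\theta_{\psi_t}^n$ and $(n+1)^{-1}\int_X(\psi_t-V_\theta)\theta_{\psi_t}^n$ using \cite{BBGZ13}, and then a partition argument together with the criterion of \cite[Lemma 1.2]{GZ07} (namely $\psi\in\Ec(X,\theta)$ iff $\int_{\{\psi\le V_\theta-t\}}\theta_{\psi_t}^n\to0$) finishes the proof; no mixed products appear. Your route buys an explicit formula for $c_\psi$ as a deficit of mixed masses, at the cost of needing an input about mixed products.

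That input is the one weak link: the inequality $\int_X\theta_\psi^n\le\int_X\langle\theta_\psi^j\wedge\theta_\varphi^{n-j}\rangle$ for an \emph{arbitrary} $\psi\in\psh(X,\theta)$ is not what \cite{BEGZ10} proves --- their monotonicity of masses (Theorem 1.16) requires currents with small unbounded locus, which $\psi$ need not have, and the general monotonicity theorem postdates both \cite{BEGZ10} and this paper. Fortunately you only use it in the direction $\int_X\theta_\psi^n=\vol(\theta)\Rightarrow$ all mixed masses equal $\vol(\theta)$, and this is precisely the statement that mixed non-pluripolar products of full mass potentials in $\{\theta\}$ have full mass, which \emph{is} in \cite{BEGZ10} (it is the result the paper itself invokes in Proposition \ref{cor: mixed volume}). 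Replace your appeal to ``BEGZ monotonicity'' by that citation and the argument is complete: if $\psi\in\Ec(X,\theta)$ then every mixed mass is $\vol(\theta)$, so $c_\psi=0$; conversely, if $c_\psi=0$ then, since each mixed mass is at most $\vol(\theta)$, all of them, in particular the $j=n$ term $\int_X\theta_\psi^n$, must equal $\vol(\theta)$, i.e.\ $\psi\in\Ec(X,\theta)$.
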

\begin{proof}The proof is an adaptation of the arguments of \cite[Theorem 2.5]{Dar13} to our more general setting. As $c_\psi$ only depends on $\psi$ it is enough to work with the special subgeodesic ray $t \to \psi_t:=\max(V_\theta-t,\psi)$. The cocycle formula \ref{eq: AMcocycle} implies that
$c_\psi=c_{\psi+c}$, thus we can assume that $\psi\leq V_\theta\leq 0$. By \cite[Proposition 2.8]{BBGZ13} 
$$
 \int_X (\psi_t-V_{\theta})\, \theta_{\psi_t}^n \leq \AM(\psi_t) \leq (n+1)^{-1} \int_X (\psi_t-V_{\theta}) \,\theta_{\psi_t}^n
$$
so our claim is equivalent to showing that
\[
\psi \in {\mathcal E}(X,\theta) \Longleftrightarrow t^{-1} \int_X (\psi_t-V_{\theta}) \theta_{\psi_t}^n\rightarrow 0.
\]
Fix $s\in (0,1)$. Note that on $\{\psi>V_{\theta}-t\}$, the two measures $\theta_{\psi_t}^n$ and $\theta_{\psi}^n$ coincide. Additionally, using that $X = \{\psi \leq V_\theta - t \} \cup \{V_\theta -t < \psi \leq V_\theta - st \} \cup \{V_\theta - st < \psi \}$ we can estimate the right-hand side above as follows: 
\[
0\geq \int_X \frac{\psi_t-V_{\theta}}{t} \theta_{\psi_t}^n \geq  -\int_{\{\psi\leq V_{\theta}-t\}} \theta_{\psi_t}^n - \int_{\{\psi\leq  V_{\theta}-ts\}} \theta_{\psi}^n -s\vol(\theta).
\]
By \cite[Lemma 1.2]{GZ07}, $\psi \in \mathcal E(X,\theta)$ if and only if $  \int_{\{\psi \leq V_\theta-t\}} \theta_{\psi_t}^n\rightarrow 0$ as ${t\rightarrow +\infty}$. Hence, the above estimates give the conclusion.
\end{proof}

\begin{lemma}
        \label{lem: AM along v} Suppose $\psi \in \psh(X,\theta)$ satisfies $\psi \leq V_\theta$. Let $t \to v(V_\theta,\psi)$ be the geodesic ray constructed in \eqref{eq: v_t_def}. Then 
        \begin{equation}
                \label{eq: AM along v}
                \AM(v(V_\theta,\psi)_t) =  t c_\psi.
        \end{equation}
        In particular,  $t \to v(V_{\theta},\psi)_t$ is constant if and only if $\psi\in \Ec(X,\theta)$.
\end{lemma}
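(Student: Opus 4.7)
The plan is to combine the linearity of the Monge--Amp\`ere energy along weak geodesics with minimal singularities (Theorem \ref{thm: AM linear big}) with the monotone continuity of $\AM$. Recall from \eqref{eq: v_t_def} that $v(V_\theta,\psi)_t$ arises as the upper semi-continuous regularization of the increasing (in $l$) limit of the weak geodesic segments $u^l_t$, where $u^l$ joins $V_\theta$ at $t=0$ to $\psi_l := \max(V_\theta - l,\psi)$ at $t=l$. Since $V_\theta$ and each $\psi_l$ have minimal singularities, linearity of $\AM$ applies to every segment $u^l$; together with $\AM(V_\theta) = 0$ (immediate from the defining formula) this gives $\AM(u^l_t) = (t/l)\AM(\psi_l)$. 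Fixing $t>0$ and letting $l \to \infty$, monotone continuity of $\AM$ (\cite[Proposition 2.10, Theorem 2.17]{BEGZ10}, as recorded in Section \ref{sect: preliminaries}) makes the left-hand side converge to $\AM(v(V_\theta,\psi)_t)$, while the right-hand side tends to $tc_\psi$ by the definition of the invariant $c_\psi$ (the invariance of $c_\psi$ under the choice $\varphi = V_\theta$ follows from \eqref{eq: AMcocycle}). This yields \eqref{eq: AM along v}.

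For the ``in particular'' statement, one direction is immediate: if $t \mapsto v(V_\theta,\psi)_t$ is constant, then it equals its value at $t=0$, namely $V_\theta$, so $tc_\psi = \AM(V_\theta) = 0$ for every $t>0$, forcing $c_\psi = 0$ and hence $\psi \in \mathcal E(X,\theta)$ by Proposition \ref{prop: E_char_c_psi}. Conversely, suppose $\psi \in \mathcal E(X,\theta)$, so $c_\psi = 0$ and $\AM(v(V_\theta,\psi)_t) = 0 = \AM(V_\theta)$ for every $t \geq 0$. The barrier bound $u^l_t \leq (1-t/l)V_\theta + (t/l)\psi_l \leq V_\theta$ passes to the limit to yield $v(V_\theta,\psi)_t \leq V_\theta$, while the finiteness of $\AM$ places $v(V_\theta,\psi)_t$ in $\mathcal E^1(X,\theta)$. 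Applying the cocycle formula \eqref{eq: AMcocycle} (extended from minimal singularity potentials to $\mathcal E^1$ by monotone approximation through the canonical cutoffs $\max(\cdot, V_\theta - k)$) we obtain
\[
0 = \AM(V_\theta) - \AM(v(V_\theta,\psi)_t) = \frac{1}{(n+1)\vol(\{\theta\})}\sum_{j=0}^n \int_X \bigl(V_\theta - v(V_\theta,\psi)_t\bigr)\, \theta_{V_\theta}^j \wedge \theta_{v(V_\theta,\psi)_t}^{n-j}.
\]
Every summand is non-negative, so each must vanish; taking $j = 0$ gives $V_\theta = v(V_\theta,\psi)_t$ almost everywhere with respect to $\theta_{v(V_\theta,\psi)_t}^n$, and the domination principle (Proposition \ref{prop: domination principle}) applied with $\varphi = v(V_\theta,\psi)_t \in \mathcal E(X,\theta)$ then upgrades this to $V_\theta \leq v(V_\theta,\psi)_t$ pointwise. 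Combined with the reverse inequality, $v(V_\theta,\psi)_t = V_\theta$ for all $t$, so the ray is constant.

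The main subtle point is the invocation of monotone continuity of $\AM$ in passing from $u^l_t$ to $v(V_\theta,\psi)_t$: the limit need not have minimal singularities, but the quoted convergence result covers this monotone passage so long as the energies remain bounded below, which is secured by $\AM(u^l_t) = (t/l)\AM(\psi_l)$ together with the convergence $\AM(\psi_l)/l \to c_\psi$. The parallel extension of the cocycle formula \eqref{eq: AMcocycle} from minimal singularity potentials to $\mathcal E^1$, used in the converse direction, is standard but should be verified along the same monotone approximation scheme.
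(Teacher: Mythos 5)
Your proposal is correct and follows essentially the same route as the paper: linearity of $\AM$ along the approximating geodesic segments $u^l$ plus monotone continuity of $\AM$ in the limit $l\to\infty$ gives \eqref{eq: AM along v}, and the equivalence then follows from Proposition \ref{prop: E_char_c_psi}. The only difference is that you spell out the paper's terse step ``$\AM$ constant along the ray implies the ray is constant'' via the cocycle formula and the domination principle, which is exactly the strict monotonicity argument the paper leaves implicit.
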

\begin{proof} We go back to the construction of $t \to v(V_\theta,\psi)_t$ in \eqref{eq: v_t_def}. By Theorem \ref{thm: AM linear big}, for each $l$ fixed, the curves $t \to \AM(u^l_t)$ are linear hence we can write:
        \[
        \AM(u^{l}_t)=  \AM(V_\theta)+ \frac{t}{l}(\AM(\max(\psi,V_{\theta}-l))-\AM(V_\theta))=\frac{t}{l}(\AM(\max(\psi,V_{\theta}-l))-\AM(V_\theta)).
        \]
        Letting $l\to +\infty$, by \cite[Proposition 3.3]{BB10} we obtain \eqref{eq: AM along v}. If $t \to v(V_\theta,\psi)_t$ is constant equal to $V_\theta$, it follows that $c_\psi=0$, thus by Proposition \ref{prop: E_char_c_psi} we get $\psi \in \Ec(X,\theta)$. Conversely, if $\psi\in \Ec(X,\theta)$ then Proposition \ref{prop: E_char_c_psi} yields that $c_{\psi}=0$, hence $\AM$ is constant along $v(V_{\theta},\psi)$, thus $v(V_{\theta},\psi)$ is constant.
\end{proof}

\begin{remark}
To show that if $t \to v(V_{\theta},\psi)_t$ is constant then $\psi\in \Ec(X,\theta)$, we only need to prove the estimate $\AM(v(V_\theta,\psi)_t) \leq  t c_\psi$. This last inequality is a simple consequence of the convexity of the Monge--Amp\`ere energy (Theorem \ref{thm: AM convex big}) and the construction of the ray $t \to v(V_{\theta},\psi)_t$ \eqref{eq: v_t_def}.
\end{remark}

\begin{lemma}
        \label{lem: Legendre transform}Given a weak geodesic ray $[0,+\infty)\ni t \to \phi_t  \in \textup{PSH}(X,\theta)$, its Legendre transform $ \Bbb R \ni \tau \to \phi^*_\tau = \inf_{t \in [0,+\infty)}(\phi_t - t\tau)$
satisfies
$$\phi^*_\tau = P_\theta(\phi^*_\tau+C,\phi_0), \ \tau \in \Bbb R,C>0.$$
In particular, $P_{[\theta,\phi^*_\tau]}(\phi_0)=\phi^*_\tau$.
\end{lemma}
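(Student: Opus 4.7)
The plan is to prove the main identity by establishing both inequalities separately, from which the \emph{in particular} statement will follow by letting $C\to\infty$.

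The inequality $\phi^*_\tau \le P_\theta(\min(\phi^*_\tau+C,\phi_0))$ is immediate: $\phi^*_\tau \le \phi_0$ (take $t=0$ in the infimum) and $\phi^*_\tau \le \phi^*_\tau + C$ since $C>0$, so $\phi^*_\tau$ is itself a competitor in the envelope. For the reverse, set $\psi := P_\theta(\min(\phi^*_\tau+C,\phi_0))$. Then $\psi \in \psh(X,\theta)$ satisfies $\psi \le \phi_0$ and $\psi \le \phi^*_\tau + C$; the latter unpacks as $\psi + t\tau - C \le \phi_t$ for every $t \ge 0$. It will suffice to upgrade this to $\psi + s\tau \le \phi_s$ for every $s \ge 0$, since then the infimum over $s$ delivers $\psi \le \phi^*_\tau$.

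The core of the argument closes the apparent $C$-gap via a family of subgeodesics on intervals of increasing length. For each $L>0$, I would introduce
\[
[0,L] \ni s \mapsto w^L_s := \max(\phi_s,\, \psi + s\tau) - \frac{sC}{L}.
\]
Its complexification $\max(\Phi(x,z),\, \psi(x)+\tau\log|z|) - (C/L)\log|z|$ is $\pi^*\theta$-psh on $X\times\{1\le|z|\le e^L\}$, as the maximum of two $\pi^*\theta$-psh functions is $\pi^*\theta$-psh and the $\log|z|$ correction is pluriharmonic. Each slice $w^L_s$ has minimal singularities because $w^L_s \ge \phi_s - C$ while $\phi_s$ does. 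The two boundary evaluations are where the choice of $sC/L$-correction pays off: $w^L_0 = \max(\phi_0,\psi) = \phi_0$ using $\psi \le \phi_0$, and $w^L_L = \max(\phi_L,\psi+L\tau) - C \le \phi_L$ using $\psi + L\tau - C \le \phi_L$. The restriction of the weak geodesic ray $[0,L]$ is, by Proposition \ref{prop: uniqueness}, the weak geodesic segment joining $\phi_0$ to $\phi_L$, hence the admissible subgeodesic $w^L$ is dominated by $\phi$ throughout $[0,L]$. Unwinding yields $\psi + s\tau - sC/L \le \phi_s$ on $[0,L]$, and for each fixed $s$, sending $L \to \infty$ eliminates the correction and gives $\psi + s\tau \le \phi_s$ for every $s \ge 0$.

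This proves $\psi \le \phi^*_\tau$, and hence the main identity. The \emph{in particular} claim is then immediate: since $P_\theta(\min(\phi^*_\tau+C,\phi_0))$ equals $\phi^*_\tau$ for every $C>0$, the increasing limit and its usc regularization defining $P_{[\theta,\phi^*_\tau]}(\phi_0)$ both coincide with $\phi^*_\tau$. The main technical points are checking that $w^L$ qualifies as an admissible subgeodesic with minimal singularities, and identifying the ray restriction with the weak geodesic segment on $[0,L]$---both direct consequences of the structural results already developed in this section.
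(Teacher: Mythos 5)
Your proposal is correct and takes essentially the same route as the paper: there the linear subgeodesic $h_t=P_\theta(\min(\phi^*_\tau+C,\phi_0))-Ct$ is compared on $[0,1]$ with the tilted, reparametrized segments $g^l_t=\phi_{tl}-tl\tau$, and after substituting $s=tl$ the resulting bound is exactly your $\psi+s\tau-sC/l\le\phi_s$, so both arguments dissipate the constant $C$ along geodesic segments of length tending to infinity (your $\max$-construction just makes the minimal-singularity requirement for the comparison subgeodesic explicit). The only step you leave implicit is that $\phi^*_\tau$ is $\theta$-psh (Kiselman's minimum principle, as the paper notes), which is what entitles you to treat $\phi^*_\tau$ as a competitor in the envelope for the easy inequality.
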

\begin{proof}One can repeat the argument in \cite[Theorem 5.3]{Dar13}. Fix $\tau \in \Bbb R$. The fact that $\phi^*_\tau$ is $\theta$-psh follows from Kiselman's minimum principle \cite{Kis78}. Suppose that $\phi^*_\tau \neq -\infty$ and fix $C > 0$. Since $\phi^*_\tau \leq \phi_0$, it results that $P_\theta(\phi^*_\tau+C,\phi_0) \geq \phi^*_\tau.$ Hence we only have to argue that:
$$P_\theta(\phi^*_\tau+C,\phi_0) \leq \phi^*_\tau.$$
Let $[0,1] \ni t \to g^l_t,h_t \in \textup{PSH}(X,\theta), \ l \geq 0$, be the weak geodesic segments defined by the formulas: $$g^l_t = \phi_{tl}-tl\tau,$$
$$h_t = P_\theta(\phi^*_\tau+C,\phi_0)-Ct.$$

Then we have $h_0 \leq \phi_0=\lim_{t \to 0}g^l_t=g^{l}_0$ and $h_1 \leq \phi^*_\tau \leq g^{l}_1$ for any $l \geq 0$. Hence, by definition of weak geodesics \eqref{eq: geoddef} we have
$$h_t \leq g^{l}_t, \ t \in [0,1],l \geq 0.$$

Taking the infimum in the above estimate over $l \in [0,+\infty)$ and then taking the supremum over $t \in [0,1]$,  we obtain:
$$P_\theta(\phi^*_\tau+C,\phi_0) \leq \phi^*_\tau.$$
Letting $C \to +\infty$ we obtain the last statement of the proposition.
\end{proof}

\subsection*{Proof of Theorem \ref{thm: characterization E big classes}}
 Theorem \ref{thm: characterization E big classes} is a consequence of the following result.

\begin{theorem}
        \label{thm: characterization E big full version}
        Let $\theta$ be a smooth closed  $(1,1)$-form whose cohomology class is big. For any $\psi\in \psh(X,\theta)$, the following are equivalent\\
        (i) $\psi\in \Ec(X,\theta)$;\\
        (ii) $P_{[\theta,\psi]}(\varphi)\in \Ec(X,\theta)$ for all $\varphi\in \Ec(X,\theta)$;\\
        (iii) $P_{[\theta,\psi]}(\varphi)=\varphi$ for all $\varphi\in \Ec(X,\theta)$;\\
        (iv) $P_{[\theta,\psi]}(V_{\theta})\in \Ec(X,\theta)$;\\
        (v)  $P_{[\theta,\psi]}(V_{\theta})=V_{\theta}$.
\end{theorem}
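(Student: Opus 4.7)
I would prove the five-way equivalence through the cyclic chain $(i)\Rightarrow(iii)\Rightarrow(ii)\Rightarrow(iv)\Rightarrow(v)\Rightarrow(i)$. The first three implications are essentially immediate: $(i)\Rightarrow(iii)$ is a direct instance of Theorem \ref{thm: Dravas criterion 1} applied to $\psi,\varphi\in\Ec(X,\theta)$, while $(iii)\Rightarrow(ii)$ is trivial, and $(ii)\Rightarrow(iv)$ follows by specializing to $\varphi=V_\theta$, which lies in $\Ec(X,\theta)$ since it has minimal singularities. So the substance of the theorem is contained in $(iv)\Rightarrow(v)$ and $(v)\Rightarrow(i)$.

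\textbf{The step $(iv)\Rightarrow(v)$.} Write $u:=P_{[\theta,\psi]}(V_\theta)$ and, for $t>0$, set $u_t:=P_{\theta}(\min(\psi+t,V_\theta))$. Since $\min(\psi+t,V_\theta)$ is usc and quasi-continuous, Proposition \ref{prop: BD vanish general} yields that $\theta_{u_t}^n$ vanishes on $\{u_t<\min(\psi+t,V_\theta)\}$, and a fortiori on $\{u_t<V_\theta<\psi+t\}$. Repeating verbatim the truncation-and-limit scheme from the proof of Theorem \ref{thm: Dravas criterion 1} (truncate via $u_{t,j}:=\max(u_t,V_\theta-j)$, invoke locality of the non-pluripolar product, then let $t\to\infty$ and $j\to\infty$, discarding the pluripolar set $\{\psi=-\infty\}$), one arrives at $\int_{\{u<V_\theta\}}\theta_u^n=0$. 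Since $(iv)$ gives $u\in\Ec(X,\theta)$, its measure has total mass $\vol(\{\theta\})$, hence $\theta_u^n$ is concentrated on $\{u=V_\theta\}$. On this set $V_\theta\leq u$ holds trivially, so the domination principle (Proposition \ref{prop: domination principle}) upgrades this to $V_\theta\leq u$ on all of $X$. Combined with $u\leq V_\theta$, this forces $u=V_\theta$.

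\textbf{The step $(v)\Rightarrow(i)$.} Replacing $\psi$ by $\psi-C$ we may assume $\psi\leq V_\theta$, since this leaves both hypothesis and conclusion unchanged. Consider the weak geodesic ray $[0,\infty)\ni t\mapsto v_t:=v(V_\theta,\psi)_t$ from \eqref{eq: v_t_def}. By Lemma \ref{lem: AM along v} combined with Proposition \ref{prop: E_char_c_psi}, $\psi\in\Ec(X,\theta)$ is equivalent to the ray being constant, so it suffices to prove the latter. On each segment $[0,l]\ni t\mapsto u^l_t$ joining $V_\theta$ to $\max(V_\theta-l,\psi)$, the constant family $s\mapsto\psi$ is a valid lower barrier, while $t$-convexity with both boundary values $\leq V_\theta$ gives the upper bound $u^l_t\leq V_\theta$; passing to the limit in $l$ yields $\psi\leq v_t\leq V_\theta$ for every $t\geq 0$. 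Now the Legendre transform $v^*_0(x):=\inf_{t\geq0}v_t(x)$ is $\theta$-psh by Kiselman's minimum principle, satisfies $\psi\leq v^*_0\leq V_\theta$, and obeys $P_{[\theta,v^*_0]}(V_\theta)=v^*_0$ by Lemma \ref{lem: Legendre transform}. Monotonicity of the operator $P_{[\theta,\cdot]}(V_\theta)$ in its first slot (immediate from the definition) together with $v^*_0\geq\psi$ and hypothesis $(v)$ gives
\[
v^*_0=P_{[\theta,v^*_0]}(V_\theta)\geq P_{[\theta,\psi]}(V_\theta)=V_\theta,
\]
which combined with $v^*_0\leq V_\theta$ forces $v^*_0=V_\theta$. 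Hence $v_t\geq V_\theta$ for all $t$, and consequently $v_t\equiv V_\theta$, which is what we wanted.

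\textbf{Main obstacle.} The conceptual heart is in $(v)\Rightarrow(i)$, where we exploit the duality between weak geodesic rays and $\theta$-psh singularity types via the Legendre transform. This duality rests ultimately on Theorem \ref{thm: AM linear big} (linearity of $\AM$ along weak geodesics in big classes), whose proof relies on the delicate subgeodesic regularization of Subsection \ref{sect: approx. of sub-geo}. By contrast, $(iv)\Rightarrow(v)$ is essentially a faithful adaptation of the argument in Theorem \ref{thm: Dravas criterion 1} (vanishing of the Monge--Amp\`ere measure on the strict sublevel set, followed by the domination principle), and no new technical input is needed.
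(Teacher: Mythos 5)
Your proposal is correct and, for most of the equivalences, coincides with the paper's own route: $(i)\Rightarrow(iii)$ via Theorem \ref{thm: Dravas criterion 1}, the trivial implications, and $(v)\Rightarrow(i)$, where your argument (ray $t\mapsto v(V_\theta,\psi)_t$, Legendre transform at $\tau=0$, Lemma \ref{lem: Legendre transform}, monotonicity of $P_{[\theta,\cdot]}(V_\theta)$, then Lemma \ref{lem: AM along v}) is essentially the paper's; in fact your observation that $v_t\geq\inf_s v_s=v^*_0$ lets you bypass the paper's appeal to monotonicity of the ray in $t$. The genuine difference is $(iv)\Rightarrow(v)$: the paper runs this step through the geodesic machinery again, applying Lemma \ref{lem: AM along v} to $\phi=P_{[\theta,\psi]}(V_\theta)$ and Lemma \ref{lem: Legendre transform} to $v_\infty$ to force the ray attached to $\psi$ to be constant, whereas you adapt the measure-theoretic argument of Theorem \ref{thm: Dravas criterion 1}: Proposition \ref{prop: BD vanish general} plus the truncation/capacity limit gives $\theta_u^n(\{u<V_\theta\})=0$ for $u=P_{[\theta,\psi]}(V_\theta)$, and hypothesis $(iv)$ supplies exactly the full-mass input needed for the domination principle --- precisely the role played by Theorem \ref{thm: sub-extension} in Theorem \ref{thm: Dravas criterion 1}, which is unavailable here since $\psi$ is not yet known to lie in $\Ec(X,\theta)$. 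This is a valid and arguably more elementary treatment of that step (it avoids the energy-linearity results for this implication, though of course $(v)\Rightarrow(i)$ still needs them). Two small points to tidy: first, to invoke Proposition \ref{prop: BD vanish general} you should note $P_\theta(\min(\psi+t,V_\theta))\not\equiv-\infty$, which holds since $\psi-\max(\sup_X\psi,0)$ lies below $\min(\psi+t,V_\theta)$; second, in $(v)\Rightarrow(i)$ the constant family $s\mapsto\psi$ is not a legitimate competitor in \eqref{eq: geoddef} when $\psi$ lacks minimal singularities (the definition of a subgeodesic of $\varphi_0,\varphi_1$ requires minimal singularities), so to justify $\psi\leq u^l_t$ use instead the barrier $t\mapsto\max(V_\theta-t,\psi)$, which has minimal singularities for $t\in[0,l]$, has $\pi^*\theta$-psh complexification, and matches the boundary data; this yields the same conclusion.
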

\begin{proof}
We can assume without loss of generality that $\psi\leq 0$. 
The implication $(i)\Longrightarrow (iii)$ follows from Theorem \ref{thm: Dravas criterion 1} while the implications $(iii)\Longrightarrow (ii) \Longrightarrow (iv)$ and $ (iii) \Longrightarrow (v)$ are trivial. 

We now prove that $(v)\Longrightarrow (i)$. Suppose that  $P_{[\theta,\psi]}(V_\theta)=V_{\theta}$. From the construction of the ray $t \to v(V_\theta,\psi)_t$ in \eqref{eq: v_t_def} it automatically follows that $v(V_\theta,\psi)_t \geq \psi$. This trivially gives $v^*_0 = \inf_{t \in [0,\infty) }v(V_\theta,\psi)_t \geq \psi$. By definition of envelope we have $V_\theta \geq P_{[\theta, v^*_0]}(V_\theta) \geq P_{[\theta, \psi]}(V_\theta)=V_\theta$. Combining this with Lemma \ref{lem: Legendre transform} we obtain that $v^*_0 = P_{[\theta, v^*_0](V_\theta)}=V_\theta$. As the ray $t \to v(V_\theta,\psi)_t$ is decreasing in $t$, this automatically gives that $V_\theta =v(V_\theta,\psi)_0\geq v(V_\theta,\psi)_t \geq v^*_0 = V_\theta$, hence $t \to v(V_\theta,\psi)_t $ is constant. Invoking Lemma \ref{lem: AM along v} we obtain that $\psi \in \mathcal E(X,\theta)$. 

It remains to prove that $(iv)\Longrightarrow (v)$. Assume that $\phi=P_{[\theta,\psi]}(V_{\theta})\in \Ec(X,\theta)$. By Lemma \ref{lem: AM along v} we have that $v(V_{\theta},\phi)_t=V_{\theta}$ for all $t$. Set  $v_t:=v(V_{\theta},\psi)_t$ as constructed in  \eqref{eq: v_t_def}. It follows from Lemma \ref{lem: Legendre transform} that $v_{\infty}= P_{[\theta,v_{\infty}]}(V_{\theta})\geq P_{[\theta,\psi]}(V_{\theta})$, hence $v_{\infty}\in \Ec(X,\theta)$. Hence  by Lemma \ref{lem: AM along v} we have that $v(V_{\theta},v_{\infty})$ is constant. Since $v_t$ is decreasing in $t$, it follows that $v_t\geq \max(V_{\theta}-t,v_{\infty}),\forall t\geq 0$. It thus follows  that $v_t=v(V_{\theta},v_{\infty})_t=V_{\theta}$, hence again  by Lemma \ref{lem: AM along v} we have that  $\psi\in \Ec(X,\theta)$.
\end{proof}

Theorem \ref{thm: sum_FULL_MASS_1} follows directly from Theorem \ref{thm: characterization E big classes}.

\begin{theorem}\label{thm: sum_FULL_MASS}
Let $\{ \theta_1\}, \{ \theta_2\}$ be big classes. The following are equivalent:\\
(i) $V_{\theta_1}+V_{\theta_2} \in \mathcal E(X, \theta_1 + \theta_2)$;\\
(ii) For any $u \in \psh(X,\theta_1), v \in \psh(X,\theta_2)$ we have
\[
u+v \in \mathcal E(X,\theta_1 + \theta_2) \Longleftrightarrow u\in \Ec(X,\theta_1), v\in \Ec(X,\theta_2).
\]      
\end{theorem}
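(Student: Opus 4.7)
The easier implication $(ii)\Rightarrow (i)$ follows by specializing to $u=V_{\theta_1}$, $v=V_{\theta_2}$ in (ii), both being trivially of full mass in their own classes. For $(i)\Rightarrow (ii)$, I normalize so that $u\leq V_{\theta_1}$ and $v\leq V_{\theta_2}$, and verify both directions of the biconditional separately.

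For the forward direction, suppose $u\in \Ec(X,\theta_1)$ and $v\in \Ec(X,\theta_2)$. Theorem \ref{thm: characterization E big classes} applied in each class yields $P_{[\theta_1,u]}(V_{\theta_1})=V_{\theta_1}$ and $P_{[\theta_2,v]}(V_{\theta_2})=V_{\theta_2}$. For each $C>0$, set
\[
\alpha_C:=P_{\theta_1}(\min(u+C,V_{\theta_1}))+P_{\theta_2}(\min(v+C,V_{\theta_2})).
\]
The function $\alpha_C$ is $(\theta_1+\theta_2)$-psh, and it is bounded above pointwise by both $u+v+2C$ and $V_{\theta_1}+V_{\theta_2}$; hence $\alpha_C\leq P_{\theta_1+\theta_2}(\min(u+v+2C,V_{\theta_1}+V_{\theta_2}))$. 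Passing to the upper semicontinuous regularization of the limit as $C\to +\infty$ gives
\[
V_{\theta_1}+V_{\theta_2}\leq P_{[\theta_1+\theta_2,u+v]}(V_{\theta_1}+V_{\theta_2}),
\]
while the reverse inequality is automatic, so equality holds. Since $V_{\theta_1}+V_{\theta_2}\in \Ec(X,\theta_1+\theta_2)$ by (i), Theorem \ref{thm: characterization E big classes} applied in the class $\theta_1+\theta_2$ produces $u+v\in \Ec(X,\theta_1+\theta_2)$.

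For the reverse direction, assume $u+v\in \Ec(X,\theta_1+\theta_2)$; by symmetry it suffices to show $u\in \Ec(X,\theta_1)$. Multilinearity of the non-pluripolar product gives
\[
\int_X\langle (\theta_{1,u}+\theta_{2,v})^n\rangle=\sum_{k=0}^{n}\binom{n}{k}\int_X\langle \theta_{1,u}^{k}\wedge \theta_{2,v}^{n-k}\rangle,
\]
and analogously with $V_{\theta_1},V_{\theta_2}$ replacing $u,v$. The assumption $u+v\in \Ec(X,\theta_1+\theta_2)$ together with (i) forces both total integrals to equal $\vol(\{\theta_1+\theta_2\})$. The pointwise inequalities $u\leq V_{\theta_1}$ and $v\leq V_{\theta_2}$, combined with monotonicity of mixed non-pluripolar masses against minimally singular potentials (obtainable through \cite[Theorem 1.16]{BEGZ10} and approximation through the canonical truncations $\max(u,V_{\theta_1}-k)$ and $\max(v,V_{\theta_2}-k)$, aided by Corollary \ref{cor: convergence big}), yield the term-wise comparison $\int_X\langle \theta_{1,u}^{k}\wedge \theta_{2,v}^{n-k}\rangle\leq \int_X\langle \theta_{1,V_{\theta_1}}^{k}\wedge \theta_{2,V_{\theta_2}}^{n-k}\rangle$ for each $k$. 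Equality of the total sums then forces term-by-term equality, and extracting the $k=n$ term delivers $\int_X \theta_{1,u}^{n}=\vol(\{\theta_1\})$, i.e.\ $u\in \Ec(X,\theta_1)$.

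The hardest step is the reverse direction, which rests on the monotonicity of mixed non-pluripolar masses against minimally singular envelopes together with the full-mass hypothesis being compatible with the multilinear expansion. The forward direction in contrast is a clean application of the envelope characterization and should pose no essential difficulty, illustrating how Theorem \ref{thm: characterization E big classes} makes the decomposition of full-mass currents transparent under the umbrella hypothesis (i).
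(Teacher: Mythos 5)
Your proof is correct, and it coincides with the paper's argument in two of the three pieces: the specialization $u=V_{\theta_1}$, $v=V_{\theta_2}$ for (ii)$\Rightarrow$(i), and the implication ``$u\in\Ec(X,\theta_1),\,v\in\Ec(X,\theta_2)\Rightarrow u+v\in\Ec(X,\theta_1+\theta_2)$'', which the paper also derives from the superadditivity $P_{[\theta_1+\theta_2,u+v]}(V_{\theta_1}+V_{\theta_2})\geq P_{[\theta_1,u]}(V_{\theta_1})+P_{[\theta_2,v]}(V_{\theta_2})=V_{\theta_1}+V_{\theta_2}$ together with Theorem \ref{thm: characterization E big classes}; your $\alpha_C$ construction just spells out that one-line envelope inequality. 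Where you genuinely diverge is the remaining implication ``$u+v\in\Ec(X,\theta_1+\theta_2)\Rightarrow u\in\Ec(X,\theta_1),\,v\in\Ec(X,\theta_2)$'': the paper simply quotes \cite[Theorem B]{DN15}, whereas you prove it directly by expanding $\langle(\theta_{1,u}+\theta_{2,v})^n\rangle$ multilinearly and comparing term by term with the expansion for $V_{\theta_1}+V_{\theta_2}$, so your write-up is self-contained (and in effect reconstructs the argument behind the cited result). The key inequality $\int_X\langle\theta_{1,u}^{k}\wedge\theta_{2,v}^{n-k}\rangle\leq\int_X\langle\theta_{1,V_{\theta_1}}^{k}\wedge\theta_{2,V_{\theta_2}}^{n-k}\rangle$ is indeed true and is within the paper's toolkit, but note it is the mixed-product analogue of the elementary bound $\int_X\theta_\varphi^n\leq\vol(\{\theta\})$: it follows from the increasing definition of the non-pluripolar product via the truncations $\max(u,V_{\theta_1}-k)$, $\max(v,V_{\theta_2}-k)$, plurifine locality, and \cite[Theorem 1.16]{BEGZ10} applied to these minimal-singularity truncations; Corollary \ref{cor: convergence big} is not really the right tool here (it requires decreasing or uniform limits of minimal-singularity potentials, which $u,v$ are not), and one should also be aware that comparison against \emph{minimal} singularities is the easy case — general monotonicity of non-pluripolar masses between arbitrary singularity types is a much deeper matter and is not needed. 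With that justification tightened, your argument is complete; a small bonus of your route is that it shows the implication ``sum of full mass $\Rightarrow$ each of full mass'' forces (i) automatically, since equality in $\sum_k\binom{n}{k}a_k\leq\sum_k\binom{n}{k}b_k\leq\vol(\{\theta_1+\theta_2\})$ is attained as soon as one such pair $u,v$ exists.
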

\begin{proof}
Since $V_{\theta_j}\in \Ec(X,\theta_j), j=1,2$, the implication $(ii)\Longrightarrow (i)$ is trivial.
Assume (i) holds. The implication $(\Longrightarrow)$ in (ii) follows from \cite[Theorem B]{DN15}. Assume that $\varphi_{j}\in \Ec(X,\theta_j), j=1,2$. We want to show that $\varphi:=\varphi_1+\varphi_2 \in \Ec(X,\theta_1+\theta_2)$. By assumption that (i) holds we get that $V_{\theta_1}+V_{\theta_2}\in \Ec(X, \theta_1+\theta_2)$. Hence, by definition of envelopes we can write 
\[
P_{[\theta_1 +\theta_2,\varphi]}(V_{\theta_1}+V_{\theta_2}) \geq P_{[\theta_1,u]}(V_{\theta_1}) +P_{[\theta_2,v]}(V_{\theta_2})=V_{\theta_1}+V_{\theta_2}. 
\]
Ultimately, it follows from Theorem \ref{thm: characterization E big classes} that $\varphi \in \Ec(X,\theta_1+\theta_2)$.
\end{proof}

\section{Proof of Theorem \ref{thm: lelong number big class}} \label{sect: proof of Theorem 1.1}

In this section we give the proof of Theorem \ref{thm: lelong number big class} and discuss some immediate consequences.

\begin{proof}[Proof of Theorem \ref{thm: lelong number big class}] We first argue the equality of Lelong numbers in (i). From Theorem \ref{thm: characterization E big classes} it follows that $P_{[\theta,\varphi]}(V_{\theta})=V_{\theta}$.  Take any $x\in X$. Then trivially $\nu(\varphi,x) \geq \nu(V_{\theta},x)$. We will argue by contradiction. Assume that $\nu(\varphi,x) > \nu(V_{\theta},x)$. Fix a holomorphic coordinate around $x$ so that we identify $x$ with $0\in \BB \subset \CC^n$ where $B$ is the unit ball in $\CC^n$. By definition of the Lelong numbers \eqref{eq: Lelongdef} we have
\[\varphi(z) \leq \gamma \log \|z\| + O(1),\]
where $\gamma=\nu(\varphi,x)>0$. Let $g$ be a smooth local potential for $\theta$ in $\BB$ and observe that if $\psi \in \psh(X,\theta)$ then $g+\psi$ is psh in $\BB$. Furtheremore, w.l.o.g. we can assume that $g + \varphi,g + V_\theta \leq 0$ in $\BB$. By the definition of the envelope we have the following inequality
\[
V_\theta + g = P_{[\theta,\varphi]}(V_\theta) + g  \leq  \sup\{v\in \psh(\BB) \setdef v\leq 0\,,\;  v\leq \gamma \log \|z\| +O(1)\}, 
\]
in $\BB$. The right-hand side is the pluricomplex Green function $G_{\BB}(z,0)$ of $\BB$ with a logarithmic pole at $0$ of order $\gamma$. By \cite[Proposition 6.1]{Kli} we have that 
\[
G_{\BB}(z,0)   \sim \gamma \log \|z\| + O(1). 
\]
But this contradicts with the assumption that $\nu(V_\theta,x) < \gamma$. 

Now we argue the equality of multiplier ideal sheafs in (i). This will be an application of Theorem \ref{thm: characterization E big classes} and the resolution of the strong openess conjecture of Guan and Zhou \cite{GuZh}, in the specific form provided by Lempert \cite{Lemp}. Indeed, from Theorem \ref{thm: characterization E big classes} it follows that 
\begin{equation}\label{eq: thm12appl}
P_\theta(\varphi+c,V_\theta)(x) \nearrow V_\theta(x) \textup{ as } c \to \infty, \textup{ for a.e. } x \in X.
\end{equation}
As $\varphi \leq V_\theta + c'$ for some $c' \in \Bbb R$, we note that $\varphi$ and $P_\theta(\varphi+c,V_\theta)$ have the same singularity type for any $c\in \Bbb R$, ultimately giving $\mathcal I(tP_\theta(\varphi+c,V_\theta),x)=\mathcal I(t\varphi,x), \ x \in X, t\geq 0$. 

Finally, \eqref{eq: thm12appl} and \cite[Theorem 1.1]{Lemp} imply that $\mathcal I(tP_\theta(\varphi+c, V_\theta),x)=\mathcal I(tV_{\theta},x)$ for large enough $c$,  proving that $\mathcal I(t\varphi,x)=\mathcal I(tV_\theta,x)$.

Now we turn to part (ii). Fix $\omega$ a K\"ahler form on $X$. We can suppose that  $\theta, \eta \leq  \tilde \omega:=\eta + \omega$ and $\tilde \omega$ is K\"ahler. Assume that $\varphi\in \Ec(X,\eta) \cap \psh(X,\theta)$. By Theorem \ref{thm: characterization E big classes} we get that $P_{[\eta,\varphi]}(V_{\eta})=V_{\eta}$. This implies $P_{[\tilde \omega,\varphi]}(V_{\eta})=V_{\eta}$ since $P_{[\eta,\varphi]}(V_{\eta})\leq P_{[\tilde \omega,\varphi]}(V_{\eta})\leq V_{\eta}$. 

Furthermore, we claim that $V_{\eta}\in \Ec(X,\tilde \omega) $, i.e., $\int_X  \tilde \omega_{V_{\eta}}^n= \vol(\tilde \omega).$
Indeed, as $\theta$ is nef,  expanding the sum of K\"ahler classes $(\eta + (1 + \vep )\omega)^n$ gives 
\[
\vol(\{\eta + (1+\vep)\omega\})^n = \sum_{k=0}^n \binom{n}{k} \{\eta + \vep \omega\}^k\cdot \{\omega\}^{n-k}. 
\]
It follows from the comments after \cite[Definition 1.17]{BEGZ10} and \cite[Proposition 2.9]{BFJ09} that the left-hand side converges to $\vol(\tilde \omega)$ while the right-hand side converges to $\sum_{k=0}^n \binom{n}{k} \{\eta\}^k\cdot \{\omega\}^{n-k}$, ultimately giving 
\[
\vol(\{\tilde \omega\})=\vol(\{\eta+ \omega\}) = \sum_{k=0}^n \binom{n}{k} \{\eta\}^k\cdot \{\omega\}^{n-k}. 
\] 
On the other hand, by multilinearity of the non-pluripolar product we get
\begin{eqnarray*}
\int_X \tilde \omega_{V_{\eta}}^n = \int_X (\eta+\omega+dd^c V_{\eta})^n = \sum_{k=0}^n  \binom{n}{k}\int_X (\eta+ dd^c V_{\eta})^{k}\wedge \omega^{n-k},
\end{eqnarray*}
and moreover $\{(\eta+ dd^c V_{\eta})^{k}\} = \{\eta\}^{k} $ for each $0\leq k\leq  n-1$ thanks to \cite[Definition 1.17]{BEGZ10}, proving the claim.

Given that $P_{[\tilde \omega,\varphi]}(V_\eta)=V_\eta$ and $V_\eta \in \mathcal E(X,\tilde \omega),$ we can use \cite[Theorem 4]{Dar14a} to conclude that $\varphi\in \Ec(X,\tilde \omega)$. Because $\theta \leq \tilde \omega$ and $\varphi \in \psh(X,\theta)$, we get $\varphi\in \Ec(X,\theta)$, as follows from \cite[Theorem B]{DN15}.
\end{proof}


\begin{remark}\label{NonFullMass}
Observe that Theorem \ref{thm: lelong number big class} (ii) is in general false for classes $\{\eta\}$ that are big but not nef. Indeed, if $\{\eta\}$ is only big, it may happen that $V_{\eta}$ has a non-zero Lelong number at some point, and then \cite[Corollary 2.18]{GZ07} would give us that $V_{\eta}$ does not have full mass with respect to any K\"ahler class $\{\theta\}$ satisfying $\eta \leq \theta$, contradicting $\mathcal E(X,\eta)\cap \textup{PSH}(X,\theta) \subset \mathcal E(X,\theta)$.
\end{remark}

As a direct consequence we obtain the following additivity property of the set of full mass currents of big and nef cohomology classes, effectively proving that condition (i) in Theorem \ref{thm: sum_FULL_MASS_1} is automatically satisfied. 

\begin{coro}\label{thm: sum_FULL_MASS_nef}
Let $\{\theta_1\},\{ \theta_2\}$ be big and nef classes. Then for any $\varphi_1 \in \textup{PSH}(X,\theta_1)$ and $\varphi_2 \in \textup{PSH}(X,\theta_2)$ we have
\[\varphi_1 + \varphi_2 \in \Ec(X,\theta_1 + \theta_2) \Longleftrightarrow \varphi_1 \in \Ec(X,\theta_1), \varphi_2 \in \Ec(X,\theta_2). 
\]      
\end{coro}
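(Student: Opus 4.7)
The plan is to reduce to Theorem~\ref{thm: sum_FULL_MASS_1}, whose condition (ii) is precisely the statement of the corollary. Hence it suffices to verify condition (i), namely $V_{\theta_1}+V_{\theta_2}\in \Ec(X,\theta_1+\theta_2)$. I would do so by a direct computation of the total non-pluripolar Monge--Amp\`ere mass of $V_{\theta_1}+V_{\theta_2}$ with respect to $\theta_1+\theta_2$ and by showing it equals $\vol(\{\theta_1+\theta_2\})$.

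First, since $\theta_1$ and $\theta_2$ are both big and nef, so is $\theta_1+\theta_2$, so a standard Fujita-type approximation for nef classes yields
$$\vol(\{\theta_1+\theta_2\})=(\{\theta_1\}+\{\theta_2\})^n=\sum_{k=0}^n \binom{n}{k}\{\theta_1\}^k\cdot\{\theta_2\}^{n-k}.$$
Next, setting $T_i:=\theta_i+dd^c V_{\theta_i}$ and using multilinearity of the non-pluripolar product \cite[Proposition 1.4]{BEGZ10}, I would expand
$$\int_X \langle(T_1+T_2)^n\rangle = \sum_{k=0}^n \binom{n}{k}\int_X \langle T_1^k\wedge T_2^{n-k}\rangle.$$

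For the extremal indices $k\in\{0,n\}$ the corresponding integrals are just $\vol(\{\theta_2\})$ and $\vol(\{\theta_1\})$, which coincide with $\{\theta_2\}^n$ and $\{\theta_1\}^n$ by nefness. For each intermediate $k$, one appeals to \cite[Theorem 1.16 and Definition 1.17]{BEGZ10}: since $V_{\theta_1}$ and $V_{\theta_2}$ have minimal singularities in their respective classes, the mass $\int_X \langle T_1^k\wedge T_2^{n-k}\rangle$ realizes the positive intersection $\langle \{\theta_1\}^k\cdot\{\theta_2\}^{n-k}\rangle$. Because both classes are nef, \cite[Proposition 2.9]{BFJ09} then identifies this positive intersection with the ordinary cohomological product $\{\theta_1\}^k\cdot\{\theta_2\}^{n-k}$. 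Summing and comparing with the Fujita expansion above gives $\int_X \langle(T_1+T_2)^n\rangle = \vol(\{\theta_1+\theta_2\})$, which is condition (i) of Theorem~\ref{thm: sum_FULL_MASS_1}.

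The argument is the natural ``two nef classes'' analog of the mass computation carried out near the end of the proof of Theorem~\ref{thm: lelong number big class}(ii), where $V_\eta\in\Ec(X,\tilde\omega)$ was established by exactly this kind of expansion. The main subtlety, as there, is tracking carefully which cohomological identifications require nefness: here it is used both to replace $\vol$ by the top self-intersection and to drop the ``positive'' from the positive intersection. No additional obstacle arises, since both $\{\theta_1\}$ and $\{\theta_2\}$ are nef by hypothesis, so condition (i) of Theorem~\ref{thm: sum_FULL_MASS_1} is verified and the corollary follows.
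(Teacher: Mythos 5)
Your proposal is correct, but it follows a genuinely different route than the paper's own proof of Corollary \ref{thm: sum_FULL_MASS_nef}. You reduce to Theorem \ref{thm: sum_FULL_MASS_1} by verifying its condition (i) through a direct mass computation: writing $T_i:=\theta_i+dd^c V_{\theta_i}$, multilinearity of the non-pluripolar product gives $\int_X\langle(T_1+T_2)^n\rangle=\sum_{k=0}^n\binom{n}{k}\int_X\langle T_1^k\wedge T_2^{n-k}\rangle$, each term is the positive intersection number $\langle\{\theta_1\}^k\cdot\{\theta_2\}^{n-k}\rangle$ because the $V_{\theta_i}$ have minimal singularities, and nefness is what lets you replace positive products by ordinary cup products and $\vol(\{\theta_1+\theta_2\})$ by $(\{\theta_1\}+\{\theta_2\})^n$; this is exactly the two-nef-class analog of the computation showing $V_\eta\in\Ec(X,\tilde\omega)$ inside the proof of Theorem \ref{thm: lelong number big class}(ii), and it substantiates the introduction's claim that condition (i) holds automatically for big and nef classes. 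The paper instead argues in the opposite logical direction: it fixes a K\"ahler form $\omega$ with $\theta_1,\theta_2\le\omega$, uses Theorem \ref{thm: lelong number big class}(ii) to place each $\varphi_j$ in $\Ec(X,\omega)$, invokes the convexity of $\Ec(X,\omega)$ in the K\"ahler case \cite{GZ05,GZ07} to conclude $\varphi_1+\varphi_2\in\Ec(X,2\omega)$, and then descends to $\Ec(X,\theta_1+\theta_2)$ via \cite{DN15}, the converse implication being the \cite{DN15} direction already used in Theorem \ref{thm: sum_FULL_MASS}. Your route yields both implications at once through Theorem \ref{thm: sum_FULL_MASS_1} and stays entirely within the cohomological calculus of \cite{BEGZ10} and \cite{BFJ09}, at the cost of the bookkeeping facts that the volume of a big nef class is its top self-intersection and that positive products of nef classes are ordinary products (the paper itself quotes these at the same level of precision, via the comments after Definition 1.17 in \cite{BEGZ10} and \cite{BFJ09}); the paper's route is shorter once Theorem \ref{thm: lelong number big class}(ii) is in hand, but leans on the K\"ahler-case convexity result and on the stability theorem of \cite{DN15}.
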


\begin{proof} Fix a K\"ahler form $\omega$ such that $\theta_{j}\leq \omega, j=1,2$. It follows from part (ii) of Theorem \ref{thm: lelong number big class}  that $\varphi_j\in \Ec(X,\omega), \forall j=1,2$. By the convexity of $\Ec(X,\omega)$ proved in \cite[Proposition 1.6]{GZ07} it follows that $\varphi_1+\varphi_2$ belongs to  $\Ec(X,2\omega)$. Now, \cite[Theorem B]{DN15} gives that $\varphi_1+\varphi_2\in \Ec(X,\theta_1+\theta_2)$, hence the result follows. 
\end{proof}

\section{Further Applications}\label{sect: application}
\subsection{Invariance of finite energy classes}
The following result says that finite energy classes are invariant under bimeromorphic maps as soon as the volume is preserved. The result was recently obtained in \cite{DNFT}. As an application of Theorem \ref{thm: lelong number big class} we give a slightly different proof of the "baby case", i.e. when $f$ is a blow-up along a smooth center. 
 
\begin{prop}\label{MA_energy}
Let $\pi: X\rightarrow Y$ be a blow up with smooth center $\mathcal{Z}$ between K\"ahler manifolds and $E$ be the exceptional divisor. Let $\alpha\in H^{1,1}(X, \RR)$ be a big class. Then the following conditions are equivalent:
\begin{itemize}
\item[(i)]$ \vol(\alpha)= \vol(\pi_\star \alpha)$;
\item[(ii)] given a positive $(1,1)$-current $T$ in $\pi_\star \alpha$, then  $S=\pi^\star T+\gamma [E]$, where $\gamma$ is a cohomological factor, is a positive $(1,1)$-current on $X$.
\item[(iii)] $\pi_{\star}(\Ec(X,\theta))= \Ec(Y,f_{\star}\theta)$,
\item[(iv)]  $\pi_{\star}(\Ec_{\chi}(X,\theta))= \Ec_{\chi}(Y,f_{\star}\theta)$ for any weight $\chi$.
\end{itemize}
\end{prop}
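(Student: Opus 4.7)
The plan is to establish the cycle $(iv) \Rightarrow (iii) \Rightarrow (i) \Leftrightarrow (ii) \Rightarrow (iv)$, where $\theta$ denotes a smooth representative of $\alpha$. The implication $(iv) \Rightarrow (iii)$ is immediate from the union decomposition $\mathcal{E}(X,\theta) = \bigcup_{\chi \in \mathcal{W}^-}\mathcal{E}_\chi(X,\theta)$ recorded in \eqref{eq: E_union}. For $(iii) \Rightarrow (i)$ I would apply (iii) to $V_\theta \in \mathcal{E}(X,\theta)$: since $\pi$ is a biholomorphism outside the exceptional divisor $E$, which is pluripolar in $X$ and of Lebesgue measure zero in $Y$, the non-pluripolar Monge--Amp\`ere masses match, giving
\[
\vol(\alpha) = \int_X \theta_{V_\theta}^n = \int_Y (\pi_\star \theta)_{\pi_\star V_\theta}^n = \vol(\pi_\star\alpha),
\]
where the last equality uses $\pi_\star V_\theta \in \mathcal{E}(Y,\pi_\star\theta)$ coming from (iii).

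For $(i) \Leftrightarrow (ii)$ I would write $\alpha = \pi^\star(\pi_\star\alpha) + aE$ in $H^{1,1}(X,\RR)$, which is valid because $\pi_\star[E] = 0$; matching cohomology classes forces $\gamma = a$ in (ii). The positivity in (ii) then amounts to $a \geq -\nu(V_{\pi_\star\theta},\mathcal{Z})$ for the smooth center $\mathcal{Z}=\pi(E)$. By Boucksom's orthogonality theorem together with the divisorial Zariski decomposition \cite{Bou04,BEGZ10}, the volume identity $\vol(\alpha) = \vol(\pi^\star\pi_\star\alpha) = \vol(\pi_\star\alpha)$ of (i) is equivalent to $aE$ lying in the negative part of $\alpha$, which is exactly the same inequality.

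The core implication is $(i) \Rightarrow (iv)$, where Theorem \ref{thm: lelong number big class}(i) enters decisively. Given $\varphi \in \mathcal{E}_\chi(X,\theta)$, Theorem \ref{thm: lelong number big class}(i) shows that $\varphi$ and $V_\theta$ share every Lelong number, in particular the common generic Lelong number $\gamma$ along $E$. Coupled with (ii), this permits a decomposition $\varphi = \pi^\star\psi + \gamma\log|s_E|^2 + \rho$ with $\psi := \pi_\star\varphi$ and $\rho$ bounded. Since $\pi$ is a biholomorphism off $E$ and (i) ensures exact mass preservation, a direct computation yields the energy transfer identity
\[
\int_X(-\chi)(\varphi - V_\theta)\,\theta_\varphi^n = \int_Y(-\chi)(\psi - V_{\pi_\star\theta})\,(\pi_\star\theta)_\psi^n,
\]
placing $\psi$ in $\mathcal{E}_\chi(Y,\pi_\star\theta)$. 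Conversely, given $\psi \in \mathcal{E}_\chi(Y,\pi_\star\theta)$, the same identity shows that the lift $\varphi := \pi^\star\psi + \gamma\log|s_E|^2$, adjusted by a smooth potential to lie in $\alpha$, belongs to $\mathcal{E}_\chi(X,\theta)$; condition (ii) guarantees this lift is $\theta$-psh.

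The main obstacle will be making the lift/push correspondence precise enough that non-pluripolar mass and $\chi$-energy are exactly preserved through the Monge--Amp\`ere operator. A priori, mass could be created or lost along $E$ when moving between $X$ and $Y$; condition (ii) ensures that the entire Lelong contribution of $\varphi$ along $E$ is cohomological and hence invisible to the non-pluripolar product, while (i) confirms that the mass accounting is exact for $V_\theta$, from which it extends to every element of $\mathcal{E}(X,\theta)$ via the rigidity of Lelong numbers furnished by Theorem \ref{thm: lelong number big class}.
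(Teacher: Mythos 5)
Your easy arrows are fine: $(iv)\Rightarrow(iii)$ by taking unions over weights, $(iii)\Rightarrow(i)$ by pushing forward $V_\theta$, and $(ii)\Rightarrow(i)$ is the direction the paper quotes from \cite[Proposition 3.3]{DN15}. The genuine gap is $(i)\Rightarrow(ii)$, which is exactly the point of the proposition and the only place where the paper's argument has new content. Your justification --- that by orthogonality and the divisorial Zariski decomposition the volume identity is ``equivalent to $aE$ lying in the negative part of $\alpha$'' --- is not correct as stated. Writing $\alpha=\pi^\star\pi_\star\alpha+aE$, condition (ii) amounts to $a+\nu(T,\mathcal{Z})\ge 0$ for every positive $T\in\pi_\star\alpha$, i.e.\ $a\ge-\nu(T_{\min},\mathcal{Z})$ for a current $T_{\min}$ with minimal singularities in $\pi_\star\alpha$, and this can perfectly well hold with $a<0$: if $\mathcal{Z}$ is a point lying on an irreducible divisor appearing with coefficient $1$ in the negative part of $\pi_\star\alpha$, then every positive current of $\pi_\star\alpha$ has Lelong number at least $1$ at $\mathcal{Z}$, so (ii), and hence (i), hold for every $a\in(-1,0)$, while $aE$ is certainly not a component of the (effective) divisor $N(\alpha)$; in the weaker reading ``$a\le$ coefficient of $E$ in $N(\alpha)$'' the inequality is \emph{always} true, so it characterizes nothing. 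What the paper does instead is: write $S_{\min}=\pi^\star T_Y+a[E]$ with $T_Y=\pi_\star S_{\min}$, so that positivity of $S_{\min}$ gives $a\ge-\nu(T_Y,\mathcal{Z})$; observe that (i) forces $\int_Y\langle T_Y^n\rangle=\vol(\pi_\star\alpha)$, i.e.\ $T_Y\in\Ec(Y,\pi_\star\alpha)$; and then apply Theorem \ref{thm: lelong number big class}(i) to get $\nu(T_Y,y)=\nu(T_{\min},y)$ for all $y$, whence $a+\nu(T,\mathcal{Z})\ge a+\nu(T_{\min},\mathcal{Z})\ge 0$ for \emph{every} positive $T\in\pi_\star\alpha$. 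Some argument of this type (or the genuinely Zariski-theoretic one of \cite{DNFT}, which is considerably longer than one sentence) is indispensable; as it stands your cycle is broken at its central link.

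Two secondary points about your $(ii)\Rightarrow(iv)$ step. First, Theorem \ref{thm: lelong number big class} is invoked there where it does no work: the decomposition $\varphi=\pi^\star\psi+\gamma\log|s_E|^2+\rho$ with $\rho$ bounded (in fact smooth) holds for \emph{every} $\varphi\in\psh(X,\theta)$ with $\gamma=a$ the cohomological coefficient, since $\theta+dd^c\varphi-\pi^\star\pi_\star(\theta+dd^c\varphi)$ is a closed current supported on $E$, hence a multiple of $[E]$; note that $a$ is in general not the generic Lelong number of $\varphi$ along $E$ (that number is $a+\nu(\pi_\star\varphi,\mathcal{Z})$), so equality of Lelong numbers is not what makes this work. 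Second, your ``exact energy transfer identity'' compares $\varphi-V_\theta$ with $\psi-V_{\pi_\star\theta}$, which requires knowing that $\pi_\star V_\theta$ has minimal singularities in $\pi_\star\alpha$; this is precisely where (ii) enters (it makes $T\mapsto\pi^\star T+a[E]$ an order-preserving bijection between the positive currents of the two classes), and it should be said explicitly --- or one can simply quote \cite[Proposition 3.3]{DN15}, as the paper does, for $(ii)\Rightarrow(iii)$ and $(ii)\Rightarrow(iv)$.
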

\begin{proof}

Recall that, given a smooth representative $\theta$ of the class $\alpha$, it follows from $\partial\bar{\partial}$-lemma that any positive $(1,1)$-current can be written as $T=\theta+dd^c \varphi$ where the global potential $\varphi$ is a $\theta$-psh function, i.e. $\theta+dd^c \varphi\geq 0$.
The implications $(iii)\Rightarrow (i)$ and  $(iv)\Rightarrow (i)$ are trivial while the fact that $(ii)$ implies $(i)$,$(iii)$ and $(iv)$ are \cite[Proposition 3.3]{DN15}. We want to prove $(i)\Rightarrow (ii)$. it suffices to show that for any positive $(1,1)$-current $T$ we have $\gamma\geq - \nu(T, \mathcal{Z})$.
Let $S_{\min}$ be a positive  current with minimal singularities in $\alpha$, then it writes as
$$
S_{\min}= \pi^\star T_{Y}+ \gamma[E]
$$ where $\gamma\geq-\nu(T_Y, \mathcal{Z})$. It easy to check that $T_Y\in \Ec(Y, \pi_\star \alpha)$. Indeed,
$$\vol(\pi_\star \alpha)=\vol(\alpha)= \int_X \langle S_{\min}^n\rangle= \int_Y\langle T_Y^n \rangle. $$
Thus, it follows from Theorem \ref{thm: lelong number big class} that $\nu(T_Y, y)= \nu(T_{\min}, y)$ for any $y\in Y$ and for any $T_{\min}$ current with minimal singularities on $Y$. Hence $\nu(T_{\min},\mathcal{Z})+\gamma\geq 0$. Furthermore any positive current $T\in \pi_\star \alpha$ is such that $\nu(T_{\min},\mathcal{Z})\leq \nu(T, \mathcal{Z})$, thus $\nu(T,\mathcal{Z})+\gamma\geq 0$.
\end{proof}

\subsection{Log concavity of non-pluripolar product}
It was conjectured in \cite[Conjecture 1.23]{BEGZ10} that 
\begin{equation}\label{eq: log concave}
        \int_X \langle T_1\wedge ...\wedge T_n\rangle \geq \left(\int_X \langle T_1^n\rangle \right)^{1/n} ...\left(\int_X \langle T_n^n\rangle \right)^{1/n},
\end{equation}
for all positive currents $T_1,...,T_n$. The result holds for currents with analytic singularities as mentioned in \cite{BEGZ10}. In this subsection we  confirm this conjecture in the case of full mass currents in big and nef classes. 

With the help of Corollary \ref{thm: sum_FULL_MASS_nef}, we can make obvious adjustments in the proof of \cite[Corollary 2.15]{BEGZ10} to get the following result:
\begin{prop}
        \label{cor: mixed volume}
        Let $\alpha_i$, $i=1, \cdots , n$ be  big and nef cohomology classes and let  $T_i\in \mathcal{E}(X, \alpha_i)$.  Then we have
\[
\int_X \langle T_1\wedge \cdots \wedge T_n\rangle =\int_X \langle T_{1,\min}\wedge \cdots \wedge T_{n,min}\rangle.
\]
\end{prop}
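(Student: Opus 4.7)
My plan is to polarize, reducing the statement to the diagonal case where it follows from Corollary \ref{thm: sum_FULL_MASS_nef} and the definition of the class $\mathcal{E}(X,\theta)$. Fix smooth representatives $\theta_i$ of $\alpha_i$ and write $T_i = \theta_i + dd^c \varphi_i$ with $\varphi_i \in \mathcal{E}(X,\theta_i)$, and $T_{i,\min} = \theta_i + dd^c V_{\theta_i}$. Since each $\alpha_i$ is big, $V_{\theta_i}$ has full mass and thus also lies in $\mathcal{E}(X,\theta_i)$.

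For any $t = (t_1,\ldots,t_n) \in (0,\infty)^n$, the class $\alpha(t) := \sum_i t_i\alpha_i$ is again big and nef, represented by the smooth form $\theta(t) := \sum_i t_i \theta_i$, and $\varphi(t) := \sum_i t_i \varphi_i$ is a $\theta(t)$-psh potential for the positive current $S(t) := \sum_i t_i T_i$. Iterating Corollary \ref{thm: sum_FULL_MASS_nef} one factor at a time, applied successively to the pair (partial sum)$+$(next summand), each of which is big and nef, gives $\varphi(t) \in \mathcal{E}(X,\theta(t))$, so
\[
\int_X \langle S(t)^n \rangle = \vol(\alpha(t)).
\]
The same argument with each $\varphi_i$ replaced by $V_{\theta_i}$ yields $\int_X \langle S_{\min}(t)^n \rangle = \vol(\alpha(t))$, where $S_{\min}(t) := \sum_i t_i T_{i,\min}$.

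The final step is to expand both integrals via multilinearity of the non-pluripolar product, obtaining
\[
\vol(\alpha(t)) = \sum_{|I| = n} \binom{n}{I}\, t^{I} \int_X \langle T_1^{i_1} \wedge \cdots \wedge T_n^{i_n} \rangle = \sum_{|I|=n} \binom{n}{I}\, t^{I} \int_X \langle T_{1,\min}^{i_1} \wedge \cdots \wedge T_{n,\min}^{i_n} \rangle,
\]
where $I = (i_1,\ldots,i_n)$. Since these two polynomial expressions in $(t_1,\ldots,t_n)$ agree on the open cone $(0,\infty)^n$, they agree as polynomials, and their coefficients must match. Reading off the coefficient of $t_1 t_2 \cdots t_n$ delivers the claimed equality.

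I expect the main technical point to be the justification of the iterated application of Corollary \ref{thm: sum_FULL_MASS_nef}: one must confirm that each intermediate partial sum $\sum_{i \leq k} t_i T_i$ lies in $\mathcal{E}(X, \sum_{i \leq k} t_i \alpha_i)$, so that its total non-pluripolar mass equals $\vol(\sum_{i \leq k} t_i \alpha_i)$. This requires only that each partial sum of big and nef classes remain big and nef, which is immediate. The remaining step is formal polynomial identification based on the multilinearity of the non-pluripolar product, as established in \cite{BEGZ10}.
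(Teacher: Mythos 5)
Your argument is correct, and it rests on the same essential input as the paper, namely Corollary \ref{thm: sum_FULL_MASS_nef}: iterating it over the (scaled) partial sums gives $\sum_i t_i\varphi_i\in\Ec(X,\sum_i t_i\theta_i)$ and $\sum_i t_iV_{\theta_i}\in\Ec(X,\sum_i t_i\theta_i)$, so both total masses equal $\vol(\sum_i t_i\alpha_i)$. Where you differ is in the finishing move: the paper's proof is a pointer to \cite[Corollary 2.15]{BEGZ10} ``with obvious adjustments,'' an argument that compares each mixed term with its minimal-singularity counterpart (via the canonical cutoffs $\max(\varphi_j,V_{\theta_j}-k)$ and \cite[Theorem 1.16]{BEGZ10}) and then uses the equality of the two multilinear expansions to force equality term by term; you instead keep the scaling parameters and identify coefficients of the polynomial identity on $(0,\infty)^n$. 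Your polarization route is perfectly valid and has the merit of being self-contained: it uses only the multilinearity of the non-pluripolar product \cite[Proposition 1.4]{BEGZ10} and the additivity corollary, with no termwise monotonicity statement needed. Two small points you should make explicit. First, the iteration needs $t_i\varphi_i\in\Ec(X,t_i\theta_i)$; this is immediate from homogeneity of the non-pluripolar product together with $V_{t_i\theta_i}=t_iV_{\theta_i}$, and sums of big and nef classes are again big and nef, so Corollary \ref{thm: sum_FULL_MASS_nef} indeed applies at each step. Second, you prove the identity with $T_{i,\min}=\theta_i+dd^cV_{\theta_i}$; to obtain it for arbitrary currents with minimal singularities you should add that the right-hand side does not depend on this choice, which follows from \cite[Theorem 1.16]{BEGZ10} since minimal-singularity currents in a big class are locally bounded on the ample locus and hence have small unbounded locus.
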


Using this  we can prove the log concavity of full mass currents in the big and nef case:

\begin{coro}\label{cor: log concave big class}
        If $T_j,j=1,\cdots n$,  are full mass currents in big and nef cohomology classes then  \eqref{eq: log concave} holds.
\end{coro}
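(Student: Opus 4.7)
The plan is to reduce the log--concavity inequality \eqref{eq: log concave} to the classical cohomological Khovanskii--Teissier inequality for $n$-tuples of nef classes, using Proposition \ref{cor: mixed volume} as the bridge from pluripotential theory to intersection theory.

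First, I would rewrite the right-hand side of \eqref{eq: log concave} in cohomological terms. Since each $T_j \in \Ec(X,\alpha_j)$, the definition of full mass gives $\int_X \langle T_j^n\rangle = \vol(\alpha_j)$, and since $\alpha_j$ is big and nef one has $\vol(\alpha_j)=\alpha_j^n$. This last identity can be obtained by expanding $\vol(\alpha_j+\vep\omega)=(\alpha_j+\vep\omega)^n$ (valid because $\alpha_j+\vep\omega$ is K\"ahler) and invoking continuity of the volume on the big cone at nef classes via \cite[Proposition 2.9]{BFJ09}, exactly as in the argument establishing $V_\eta \in \Ec(X,\tilde\omega)$ inside the proof of Theorem \ref{thm: lelong number big class}(ii). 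Hence the right-hand side of \eqref{eq: log concave} equals $\prod_{j=1}^n (\alpha_j^n)^{1/n}$.

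Next, I would use Proposition \ref{cor: mixed volume} to replace each $T_j$ on the left-hand side by a minimal-singularity representative $T_{j,\min}$ in the same class, without changing the value of the integral:
\[
\int_X \langle T_1\wedge\cdots\wedge T_n\rangle = \int_X \langle T_{1,\min}\wedge\cdots\wedge T_{n,\min}\rangle.
\]
Expanding this mixed mass multilinearly in the non-pluripolar product (again in analogy with the cohomological expansion used in the proof of Theorem \ref{thm: lelong number big class}(ii)) and using that each $\alpha_j$ is nef so that $\{(\theta_j+dd^c V_{\alpha_j})^{k}\}=\alpha_j^k$ in the appropriate sense (cf.\ the comments after \cite[Definition 1.17]{BEGZ10}), one identifies
\[
\int_X \langle T_{1,\min}\wedge\cdots\wedge T_{n,\min}\rangle = \alpha_1\cdot\alpha_2\cdots\alpha_n,
\]
the ordinary cohomological mixed intersection number.

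With these two identifications in hand, inequality \eqref{eq: log concave} becomes
\[
(\alpha_1\cdot\alpha_2\cdots\alpha_n)^n \geq \prod_{j=1}^n \alpha_j^n,
\]
which is precisely the Khovanskii--Teissier inequality for $n$-tuples of nef classes on a compact K\"ahler manifold, a classical consequence of the Hodge-index-type estimates for nef classes. The conceptual content of the corollary has therefore been absorbed entirely into Proposition \ref{cor: mixed volume} (which in turn rests on Corollary \ref{thm: sum_FULL_MASS_nef}); beyond that, no further obstacle remains, and the only subtlety worth verifying carefully is the passage from the mixed non-pluripolar mass of minimal-singularity representatives to the cohomological intersection number in the big-and-nef setting, which is standard.
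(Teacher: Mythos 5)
Your argument is correct, but it follows a genuinely different route from the paper. You convert both sides of \eqref{eq: log concave} into cohomological quantities: the right-hand side via $\int_X\langle T_j^n\rangle=\vol(\alpha_j)=\alpha_j^n$ for big and nef $\alpha_j$, and the left-hand side via Proposition \ref{cor: mixed volume} together with the identification of the mixed non-pluripolar mass of minimal-singularity representatives with the positive intersection product, which for big and nef classes coincides with the ordinary product $\alpha_1\cdots\alpha_n$ (this needs the continuity of positive products on the big cone, \cite[Proposition 1.19]{BEGZ10} or \cite{BFJ09}, not just multilinearity -- with $n$ distinct non-K\"ahler factors the analogy with the computation in Theorem \ref{thm: lelong number big class}(ii) is looser than you suggest, though the statement is indeed standard). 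You then conclude by the Khovanskii--Teissier inequality for nef classes, an external classical input. The paper instead stays entirely inside pluripotential theory and never identifies the common mass with an intersection number: setting $\mu:=\langle T_1\wedge\cdots\wedge T_n\rangle$ and choosing $\lambda_j$ with $\lambda_j\mu(X)=\int_X\langle T_j^n\rangle$, it solves $\langle S_j^n\rangle=\lambda_j\mu$ in each class $\{T_j\}$ by \cite[Theorem A]{BEGZ10}, applies the pointwise mixed Monge--Amp\`ere inequality \cite[Proposition 1.11]{BEGZ10} to get $\langle S_1\wedge\cdots\wedge S_n\rangle\geq(\lambda_1\cdots\lambda_n)^{1/n}\mu$, and then uses Proposition \ref{cor: mixed volume} to equate $\int_X\langle S_1\wedge\cdots\wedge S_n\rangle$ with $\int_X\langle T_1\wedge\cdots\wedge T_n\rangle$ before integrating. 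Your reduction is shorter and conceptually transparent, but it outsources the analytic heart to the cohomological Khovanskii--Teissier theorem (itself usually proved by a Calabi--Yau-type argument); the paper's proof is self-contained modulo \cite{BEGZ10} and in effect reproves that positivity directly at the level of full mass currents.
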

\begin{proof}
Let $\mu$ denote the non-pluripolar measure $\mu:=\langle T_1\wedge ...\wedge T_n\rangle$ and let $\lambda_j, j=1,\cdots ,n$ be positive constants such that 
\begin{equation}\label{eq: lambda_def}
\lambda_j\mu(X) =\int_X \langle T_j^n \rangle. 
\end{equation}
For each $j$, using \cite[Theorem A]{BEGZ10} there exists a positive full mass current $S_j\in \{T_j\}$ such that 
$$ \langle S_j^n \rangle = \lambda_j \mu. $$
By \cite[Proposition 1.11]{BEGZ10} we have that 
\[
\langle S_1\wedge ...\wedge S_n\rangle \geq (\lambda_1\cdots \lambda_n)^{1/n} \mu. 
\]
Proposition \ref{cor: mixed volume} gives that $\int_X \langle S_1\wedge ...\wedge S_n\rangle=\int_X \langle T_1\wedge ...\wedge T_n\rangle$. Hence after integrating the above inequality, due to \eqref{eq: lambda_def}, the result follows.
\end{proof}

\paragraph{Acknowledgments. } The  first named author has been partially supported by BSF grant 2012236 and NSF grant DMS--1610202. The second named author is supported by a Marie Sklodowska Curie individual fellowship 660940--KRF--CY (MSCA--IF). The last named author has been supported by the SNS project  "Pluripotential theory in differential geometry". 
 
The last named author would like to thank D. Angella and S. Calamai for organizing a Ph.D course at University of Florence, which inspired part of this work. At the time this paper was written, the first and second named authors were in residence at the Mathematical Sciences Research Institute in Berkeley, Cali\-fornia, attending the "Differential Geometry" thematic semester, and were supported by the NSF grant DMS--1440140.
 
 We thank V. Tosatti for reading an initial version of this article, generously sharing some of his ideas, and pointing out the connection with multiplier ideal sheaves that we ultimately integrated into Theorem \ref{thm: lelong number big class}. We would like to thank V. Guedj for his suggestions on how to improve the paper. Lastly, we would also like to thank the anonymous referee for requesting to clarify the relationship of our arguments with the regularity results of \cite{BD12}.

\let\OLDthebibliography\thebibliography 
\renewcommand\thebibliography[1]{
  \OLDthebibliography{#1}
  \setlength{\parskip}{1pt}
  \setlength{\itemsep}{1pt plus 0.3ex}
}

\noindent{\sc University of Maryland}\\
{\tt tdarvas@math.umd.edu}\vspace{0.1in}\\
\noindent{\sc Imperial College London}\\
{\tt e.di-nezza@imperial.ac.uk}\vspace{0.1in}\\
{\sc Scuola Normale Superiore, Pisa, Italy}\\
{\tt chinh.lu@sns.it}
\end{document}